\newcommand{\ZZ}{\mathbb{Z}}			% The set of integers
\newcommand{\Z}{\mathbb{Z}}				% The set of integers
\newcommand{\NN}{\mathbb{N}}			% The set of natural numbers
\newcommand{\N}{\mathbb{N}}				% The set of natural numbers
\newcommand{\RR}{\mathbb{R}}			% The set of reals
\newcommand{\QQ}{\mathbb{Q}}			% The set of rationals
\newcommand{\CC}{\mathbb{C}}			% The set of complex numbers
\newcommand{\GG}{\mathbb{G}}			% A group
\renewcommand{\P}{\mathbb{P}}			% Probability
\newcommand{\symb}[1]{\mathtt{#1}}		% Symbol
\newcommand{\isdef}{\triangleq}			% "Is Defined" symbol
\newcommand{\abs}[1]{%					% Absolute value
	\left\lvert#1\right\rvert%
}
\newcommand{\bigabs}[1]{%				% Absolute value (non-automatic version)
	\big\lvert#1\big\rvert%
}
\newcommand{\norm}[1]{%					% Norm
	\left\lVert#1\right\rVert%
}
\newcommand{\snorm}[1]{%				% Seminorm
	%\left\langle\!\left\langle#1\right\rangle\!\right\rangle%
	\left\llangle#1\right\rrangle%
}
\newcommand{\biggsnorm}[1]{%			% Seminorm (non-automatic version)
	\bigg\llangle#1\bigg\rrangle%
}
\newcommand{\oo}{\circ}				% Function composition
\newcommand{\majority}{%					% Majority
	\operatorname{\mathrm{majority}}%
}
\newcommand{\dd}{\mathrm{d}}			% Differential
\newcommand{\xPr}{\operatorname{\mathbb{P}}}		% Probability
\newcommand{\xExp}{\operatorname{\mathbb{E}}}		% Expected value
\newcommand{\indicator}[1]{\mathbbm{1}_{#1}}			% Indicator function
\newcommand{\pspace}[1]{\mathcal{#1}}	% Probability space, configuration space, subshifts, ...
\newcommand{\xspace}[1]{\mathscr{#1}}	% Spaces of measures, ...
\newcommand{\field}[1]{\mathfrak{#1}}	% sigma-fields, ...
\newcommand{\SymGroup}{\mathrm{Sym}}	% Symmetric group
\newcommand{\TV}{\mathrm{TV}}			% Total variation
\newcommand{\critical}{\mathsf{c}}		% Critical subscript
\def \M {\xspace{M}}					% Space of probability measures
\def \X {\pspace{X}}					% Configuration space
\def \Neighb {\mathcal N}
\def \E {\mathbb E}
\def \qm {%
	\mathchoice
		{\mbox{\normalsize\textcircled{\raisebox{-0.4pt}{\footnotesize ?}}}}
		{\mbox{\normalsize\textcircled{\raisebox{-0.4pt}{\footnotesize ?}}}}
		{\mbox{\scriptsize\textcircled{\raisebox{-0.3pt}{\tiny ?}}}}
		{\mbox{\tiny\textcircled{\fontsize{3.5}{3.5}\selectfont ?}}}
}
\def \qO {\symb{0}} %{\mathtt {\bf 0}}
\def \qX {\symb{1}} %{\mathtt {\bf 1}}
\newcommand{\blank}{\diamond}	% blank symbol
\newenvironment{cellmatrix}{%	% multi-layer cell state
	\left[\begingroup\small\begin{matrix}%
}{%
	\end{matrix}\endgroup\right]%
}
\newcommand{\pW}{\symb{W}}		% particle type: wall
\newcommand{\pL}{\symb{L}}		% particle type: left
\newcommand{\pR}{\symb{R}}		% particle type: right
\def \t {\tilde}
\def \ave {\preceq}
\def \e {\mathbf e}
\def \u {\uline{\mathbf{f}}}	% update function
\def \qu {\uline{\mathbf{q}}}	% update function for distribution q (memoryless noise)
\def \tu {\uline{\mathbf{n}}}	% update function for \theta (zero-range noise)
\newcommand{\hmax}{\overline{h}}	% maximum entropy per site
\newcommand{\xConfig}[1]{%
	\begin{tikzpicture}[
		baseline=-\the\dimexpr\fontdimen22\textfont2\relax,ampersand replacement=\&]
		\matrix[
			matrix of math nodes,
			nodes={
				minimum size=1.2ex,text width=1.2ex,
				text height=1.2ex,inner sep=3pt,draw={gray!20},align=center,
				anchor=base
			}, row sep=1pt,column sep=1pt
		] (config) {#1};
		\node[draw,rectangle,help lines,fit=(config), inner sep=0pt] {};
	\end{tikzpicture}
}
\DeclareFontFamily{OMX}{MnSymbolE}{}
\DeclareSymbolFont{MnLargeSymbols}{OMX}{MnSymbolE}{m}{n}
\DeclareFontShape{OMX}{MnSymbolE}{m}{n}{
    <-6>  MnSymbolE5
   <6-7>  MnSymbolE6
   <7-8>  MnSymbolE7
   <8-9>  MnSymbolE8
   <9-10> MnSymbolE9
  <10-12> MnSymbolE10
  <12->   MnSymbolE12
}{}
\DeclareFontShape{OMX}{MnSymbolE}{b}{n}{
    <-6>  MnSymbolE-Bold5
   <6-7>  MnSymbolE-Bold6
   <7-8>  MnSymbolE-Bold7
   <8-9>  MnSymbolE-Bold8
   <9-10> MnSymbolE-Bold9
  <10-12> MnSymbolE-Bold10
  <12->   MnSymbolE-Bold12
}{}
\let\llangle\@undefined
\let\rrangle\@undefined
\DeclareMathDelimiter{\llangle}{\mathopen}%
                     {MnLargeSymbols}{'164}{MnLargeSymbols}{'164}
\DeclareMathDelimiter{\rrangle}{\mathclose}%
                     {MnLargeSymbols}{'171}{MnLargeSymbols}{'171}
\newcommand{\ditto}[1][2em]{\rule[0.5ex]{#1}{0.4pt}~\raisebox{-0.5ex}{''}~\rule[0.5ex]{#1}{0.4pt}}
\newcommand{\xditto}[1]{\ditto[\widthof{#1}/2-\widthof{~'}]}
\newcommand{\nRoman}[1]{\textup{\uppercase\expandafter{\romannumeral#1}}}
\theoremstyle{plain}
\newtheorem{theorem}{Theorem}[section]
\newtheorem{lemma}[theorem]{Lemma}
\newtheorem{corollary}[theorem]{Corollary}
\newtheorem{proposition}[theorem]{Proposition}
\newtheorem{openproblem}{Problem}
\theoremstyle{definition}
\newtheorem{remark}[theorem]{Remark}
\newtheorem{example}[theorem]{Example}
\theoremstyle{remark}
\newcommand{\exampleqed}{\ensuremath{\ocircle}\par}
\newcommand{\remarkqed}{\ensuremath{\Diamond}\par}
\begin{document}

\title{
	Ergodicity of some classes of cellular automata subject to noise%
	\thanks{%
		The work of ST is supported by ERC Advanced Grant 267356-VARIS of Frank den Hollander.
		ST also wishes to thank the Department of Mathematics at the University of British Columbia for support.
	}
	%\footnotetext{Last update:~\today}
}

\author{%
	Ir\`ene Marcovici%
	\thanks{%
		Institut \'Elie Cartan de Lorraine, UMR 7502, Universit\'e de Lorraine, CNRS, France;
		Email: \href{mailto:irene.marcovici@univ-lorraine.fr}{\texttt{irene.marcovici@univ-lorraine.fr}}
	}
	\and
	Mathieu Sablik%
	\thanks{%
		Institut de Math\'ematiques de Toulouse, UMR 5219, Universit\'e Paul Sabatier, CNRS, France;
		Email: \href{mailto:mathieu.sablik@math.univ-toulouse.fr}{\texttt{mathieu.sablik@math.univ-toulouse.fr}}
	}
	\and
	Siamak Taati%
	\thanks{%
		Bernoulli Institute, University of Groningen, P.O.\ Box 407, 9700 AK Groningen, The Netherlands;
		Email: \href{mailto:siamak.taati@gmail.com}{\texttt{siamak.taati@gmail.com}}
	}
}

%\date{}

\maketitle

\begin{abstract}
Cellular automata (CA) are dynamical systems on symbolic configurations on the lattice. They are also used as models of massively parallel computers. As dynamical systems, one would like to understand the effect of small random perturbations on the dynamics of CA. As models of computation, they can be used to study the reliability of computation against noise. 

We consider various families of CA (nilpotent, permutive, gliders, CA with a spreading symbol, surjective, algebraic) and prove that they are highly unstable against noise, meaning that they forget their initial conditions under slightest positive noise. This is manifested as the ergodicity of the resulting probabilistic CA. The proofs involve a collection of different techniques (couplings, entropy, Fourier analysis), depending on the dynamical properties of the underlying deterministic CA and the type of noise.

\medskip

\noindent
\emph{Keywords:} Cellular automata, probabilistic cellular automata, noise, ergodicity, coupling, entropy method, Fourier analysis

\smallskip

\noindent
\emph{MSC2010:} 60K35; 60J05; 37B15; 37A50

\renewcommand{\contentsname}{\vspace{-1.5em}}
{\footnotesize\tableofcontents}

\end{abstract}

\section{Introduction}

Consider a configuration of symbols (or colors) from a finite set $S$ on the sites of the hypercubic lattice~$\ZZ^d$. 
A \emph{cellular automaton} (CA) is a dynamical system on such configurations, obtained by iterating a local update rule simultaneously at every site of the lattice.
When the updates are random, we have a Markov process called a \emph{probabilistic cellular automaton} (PCA): at each time step, the new symbol at each site is randomly updated, independently of the others, according to a distribution prescribed by the current pattern of symbols on a finite collection of neighbouring sites.

CA and PCA have been widely studied with various motivations~\cite{Wol86, TofMar87, TooVasStaMitKurPir90, Gar95, ChoDro98, Wol02, Kur03, Kar05, Ada09, CecCoo10, RozBacKok12, MaiMar14_TCS, LouNar18}. Despite the multiplicity of viewpoints, a central problem is to describe the asymptotic behaviour of the system and its dependence on the initial condition. Indeed, even when the local behaviour is simple, the global behaviour is generally difficult to predict, and there are only few CA or PCA for which we have a complete and explicit description of the asymptotic behaviour. 

The most basic question about the asymptotic behaviour of a PCA is its \emph{ergodicity}.  A PCA is said to be ergodic if it asymptotically ``forgets'' its initial condition, meaning that the distribution of its configuration always converges to one and the same distribution regardless of the initial condition.
In other words, a PCA is ergodic if its action on probability measures has a unique fixed point that attracts all the other measures.
This paper concerns the ergodicity problem for the family of PCA obtained by perturbing CA with noise.

In computer science, deterministic CA are used as models of massively parallel computers (see e.g.,~\cite{Gar95} and the relevant chapters of~\cite{Ada09} and~\cite{RozBacKok12}). In order to study the reliability of computation against noise, one is interested in the effect of small random perturbations on the dynamics of the CA. A prerequisite for the ability to perform computation reliably in presence of noise is that the system should be able to remember at least one bit of information from its input, for otherwise the output will be pure noise and independent of the input.  
Thus, a CA that becomes ergodic when perturbed by noise cannot serve as a fault-tolerant computer in presence of noise.

From the perspective of probability theory, noisy CA constitute a class of PCA that are close to being deterministic.  In models originating from statistical physics, low noise corresponds to low temperature, and the study of low-noise PCA shares the same kind of challenges as in low-temperature models.
%(This correspondence holds both in the spirit and in precise mathematical terms.)
In particular, the ergodicity question in the low-noise regime is closely related to the question of presence or absence of phase transition at low temperature~\cite{KozVas80, GolKuiLebMae89, LebMaeSpe90, DaiLouRoe02}.
From a more abstract point of view, CA form a rich class of topological dynamical systems, and the introduction of random perturbations allows one to study probabilistic notions of sensitivity and stability.

For deterministic CA, the common tools for describing the possible asymptotic behaviour require the update rule to have specific algebraic or combinatorial structure.
One approach to analyze the asymptotic distribution is to interpret the dynamics in terms of ``particles'' that move and interact~\cite{BelFer95,BelFer05,Fis90,Kur03b,HelSab17}.
An alternative approach relies on the CA to have an algebraic structure~\cite{Lin84,PivYas02,FerMaaMarNey00,HelSalThe17}.  For deterministic CA, ergodicity is equivalent to nilpotency, a property which is algorithmically undecidable~\cite{Kar92}.  Nonetheless, nilpotent CA are not so widespread and a typical CA often exhibits different asymptotic behaviour depending on its initial condition.  In fact, using the computation capabilities of CA, one can design deterministic CA having about any behaviour wanted~\cite{HelSab16}.

The case of PCA is quite different: constructing a CA whose trajectories remain distinguishable under the influence of noise is a notoriously difficult problem. Most CA seem to be highly unstable against noise, meaning that they forget their initial conditions under slightest positive noise. This is manifested as the ergodicity of the resulting PCA. The only known example of a one-dimensional CA that remains non-ergodic under sufficiently small positive noise has a sophisticated construction due to Peter G\'acs~\cite{Gac86, Gac01}. In higher dimensions, a family of examples is provided by Andrei Toom~\cite{Too80}, but the problem remains highly non-trivial. 

A variety of tools have been developed to study the ergodicity of PCA. However, most of these tools only allow to handle PCA for which all the transition probabilities are sufficiently large (i.e., the high-noise/high-temperature regime). In particular, ergodicity is often difficult to prove for noisy CA when the noise is small, even in cases where it appears clear from heuristics or simulations. Consider for instance the simplest case of a one-dimensional PCA with binary alphabet and neighbourhood of size $2$, under the additional assumption of left-right symmetry of the update rule.  Such a PCA is identified by three parameters. The standard methods can be used to handle more than 90\% of the volume of the cube $[0,1]^3$ where the parameters lie~\cite[Chap.~7]{TooVasStaMitKurPir90}.  However, when approaching some edges of the cube, none of the known criteria for ergodicity holds, although one may expect the ergodicity to be the norm, as soon as the parameters belong to the interior of the cube.

To understand the frontiers between ergodicity and non-ergodicity, we pursue the program of identifying dynamical and combinatorial properties for a CA that guarantee the ergodicity of its random perturbations.
We prove the ergodicity of various families of CA (nilpotent, permutive, gliders, CA with spreading symbols, surjective, algebraic) subject to noise, using a collection of different techniques (couplings, entropy, Fourier analysis).

The results are summarized in Section~\ref{sec:prelim:results}.
Section~\ref{sec:prelim:notation} is dedicated to notation and terminology.
In Section~\ref{sec:prelim:ergodicity}, we discuss the notion of ergodicity and prove two general results regarding the unique invariant measure of ergodic PCA.
The various models of noise considered in this paper are introduced in Section~\ref{sec:prelim:noise}.
The ergodicity results are divided into three sections based on the method of proof they use: the coupling method (Sec.~\ref{sec:coupling}), the entropy method (Sec.~\ref{sec:entropy}) and the Fourier analysis method (Sec.~\ref{sec:fourier}).  We conclude with some open problems in Section~\ref{sec:open}.

\section{Preliminaries and results}
\label{sec:prelim}

\subsection{Notation and terminology}
\label{sec:prelim:notation}

%A cellular automaton is a dynamical system on symbolic configurations on a lattice, obtained by iterating a local update rule simultaneously at every site of the lattice. When the local update rule is stochastic, we obtain a Markov process which is called a probabilistic cellular automaton.

We shall generally refer to the book by K\r{u}rka~\cite{Kur03} and the survey by Kari~\cite{Kar05} for background on deterministic cellular automata and to the surveys by Toom et al.~\cite{TooVasStaMitKurPir90} and Mairesse and Marcovici~\cite{MaiMar14_TCS} for background on probabilistic cellular automata.

%We shall generally refer to the book by K\r{u}rka~\cite{Kur03} for background on deterministic cellular automata and to the survey by Toom et al.~\cite{TooVasStaMitKurPir90} for background on probabilistic cellular automata.
%Background on the topic can be found in~\cite{Kur03,TooVasStaMitKurPir90,MaiMar14_TCS}.

Let $S$ be a finite set of \emph{symbols}, and $d\geq 1$ an integer.
A \emph{configuration} on the $d$-dimensional lattice $\ZZ^d$ is a map $x:\ZZ^d\to S$ assigning a symbol~$x_k$ from~$S$ to each \emph{site}~$k$ of $\ZZ^d$.
We will often denote the set of all configurations by $\X\isdef S^{\ZZ^d}$.
A map $F:\X\to\X$ is a \emph{cellular automaton} (CA) on $\X$ if there exist $n_1,n_2,\ldots,n_m\in\ZZ^d$ and $f:S^m\to S$ such that
\begin{align}
	%\forall x\in\X, \; \forall k\in \Z^d, \;
		(Fx)_k\isdef f(x_{k+n_1},\ldots, x_{k+n_m}) %= f(x_{k+\Neighb})
\end{align}
for each $x\in\X$ and $k\in\ZZ^d$.
The function $f:S^m\to S$ is called the \emph{local rule} of the CA and the set $\Neighb\isdef\{n_1,n_2,\ldots,n_m\}$ its \emph{neighbourhood}.  The set $\Neighb(k)\isdef k+\Neighb = \{k+n: n\in\Neighb\}$ consists of the \emph{neighbours} of site $k$.  We also introduce $\Neighb^0\isdef\{0\}$, and $\Neighb^{t+1}\isdef\Neighb^t+\Neighb=\{a+b \; ; \; a\in \Neighb^t, \, b\in\Neighb\}$ for $t\geq 0$, so that $\Neighb^t$ can be thought of as the neighbourhood of the CA $F^t$. Similarly, we define $\Neighb^t(k)\isdef k+\Neighb^t$.

The restriction of a configuration $x$ to a set $K\subseteq\ZZ^d$ is denoted by~$x_K$. The \emph{translation} (or \emph{shift}) by $a\in\ZZ^d$ is the map $\sigma^a:\X\to\X$ defined by $(\sigma^a x)_k\isdef x_{a+k}$ for each $k\in\ZZ^d$.

The set $\X$ of configurations is equipped with the product topology.
If $K\subseteq\ZZ^d$ is a finite set and $y_K\in S^K$, we call the set
$[y_K]\isdef \{x\in S^{\ZZ^d}: \text{$x_k=y_k$ for all $k\in K$}\}$
a \emph{cylinder} with \emph{base}~$K$.
Each cylinder set is both open and closed, and the collection of all cylinder sets is a countable basis for the product topology of $\X$.
According to the Curtis--Hedlund--Lyndon theorem (see e.g.~\cite[Thm.~5.2]{Kur03}),
the CA on $\X$ are precisely identified by the maps $F:\X\to\X$ that are continuous and commute with all translations.

For a \emph{probabilistic cellular automaton} (PCA) the local rule is randomized, and is independently applied at every site.
More specifically, the local rule of a PCA is a stochastic matrix $\varphi:S^m\times S\to[0,1]$, so that $\sum_{b\in S}\varphi(a_1,a_2,\ldots,a_m)(b)=1$ for each $a_1,a_2,\ldots,a_m\in S$.
Starting from a configuration $x$, the symbol at each site $k$ is updated at random according to the distribution $\varphi(x_{k+n_1},x_{k+n_2},\ldots,x_{k+n_m})(\cdot)$, independently of the other sites.
This is described by a transition kernel $\Phi$, where
\begin{align}
	\Phi(x,[y_K]) &\isdef \prod_{k\in K} \varphi(x_{k+n_1},x_{k+n_2},\ldots,x_{k+n_m})(y_k)
\end{align}
for every configuration $x\in\X$ and each cylinder set $[y_K]$.
An \emph{evolution} (or \emph{trajectory}) of the PCA is a Markov process with transition kernel $\Phi$, that is, is a sequence $X^0,X^1,\ldots$ of random configurations satisfying
\begin{align}
	\xPr\big(X^{t+1}\in [y_K] \,\big|\,X^0,X^1,\ldots,X^t\big) &=
		\Phi\big(X^t,[y_K]\big) %\quad\text{(almost surely)}
\end{align}
almost surely for every cylinder set $[y_K]$ and every $t\geq 0$.
A bi-infinite evolution $\ldots,X^{-1},X^0,X^1,\ldots$ is defined similarly.
A PCA has \emph{positive rates} if its local rule is strictly positive, meaning that for each $a_1,a_2,\ldots,a_m\in S$ and $b\in S$, we have $\varphi(a_1,a_2,\ldots,a_m)(b)>0$.

The set of all continuous \emph{observables} $h:\X\to\CC$, denoted by $C(\X)$, is a Banach space with the uniform norm $\norm{h}\isdef\sup_{x\in\X}\abs{h(x)}$.  A \emph{local} observable is an observable $h\in C(\X)$ that can be written as a linear combination of characteristic functions of cylinder sets, so that $h(x)$ depends on the symbols at only finitely many sites.  The local observables form a dense linear subspace of $C(\X)$, which we shall denote by $C_0(\X)$.

The set of all Borel probability measures on $\X$ is denoted by $\M(\X)$.  A measure $\mu\in\M(\X)$ is uniquely determined by the probabilities it associates to cylinder sets.  Furthermore, a sequence $\mu_1,\mu_2,\ldots\in\M(\X)$ of probability measures converges \emph{weakly} to another measure $\mu\in\M(\X)$ if and only if $\mu_n(E)\to\mu(E)$ for every cylinder set $E\subseteq\X$.
With the weak topology, the space $\M(\X)$ is compact and metrizable.
Let $\field{F}_A$ denote the sub-$\sigma$-algebra of the Borel sets generated
by the cylinder sets with base $A$.
We denote by
\begin{align}
	\label{eq:total-variation:window}
	\norm{\mu-\nu}_A &\isdef \sup_{E\in\field{F}_A}\abs{\mu(E)-\nu(E)}
		= \frac{1}{2}\sum_{u\in S^A}\bigabs{\mu([u])-\nu([u])}
\end{align}
the \emph{total variation distance} between the restrictions of $\mu$ and $\nu$ to $\field{F}_A$.  This is the distance between the distributions of $X_A$ and $Y_A$ where
$X$ and $Y$ are random configurations distributed according to~$\mu$ and~$\nu$.
%Let $I_n\isdef\ZZ^d\cap[-n,n]^d$.
%The function
%\begin{align}
%	d(\mu,\nu) &\isdef \sum_{n=0}^\infty 2^{-n}\norm{\nu-\mu}_{I_n}
%		= \sum_{n=0}^\infty\sum_{u\in S^{I_n}} 2^{-n-1}\abs{\nu([u])-\mu([u])}
%\end{align}
%defines a metric for the weak topology on $\M(\X)$.

A PCA kernel $\Phi$ naturally defines two continuous linear operators, one on $\M(\X)$ and the other on $C(\X)$.
Following the usual convention (e.g.~\cite{TooVasStaMitKurPir90}), we write $\Phi$ on the right-hand side of measures and on the left-hand side of observables.  Given a measure $\mu$, we denote by $\mu\Phi$ the measure defined by $(\mu\Phi)(A)\isdef\int \Phi(x,A)\mu(\dd x)$, that is, the distribution of $X^{t+1}$ if $X^t$ is distributed according to $\mu$.  Given an observable $h\in C(\X)$, we write $\Phi h$ for the observable defined by $(\Phi h)(x)\isdef \xExp\big[h(X^{t+1})\,\big|\,X^t=x\big] =\int h(y)\Phi(x,\dd y)$.
%hence, $(\Phi h)(x)$ is the expected value of $h(X^{t+1})$ if $X^t=x$.

%In this article, we study families of PCA obtained when adding random and independent errors to the updates of deterministic CA.  We say that a PCA $\Phi$ is an \emph{$\varepsilon$-perturbation} of a deterministic CA $F$ if $\Phi$ and $F$ share the same alphabet $S$ and neighbourhood $\Neighb$, and their local rules satisfy
%\begin{align}
%	\varphi(a_1,a_2,\ldots,a_m)\big(f(a_1,a_2,\ldots,a_m)\big)\geq 1-\varepsilon
%\end{align}
%for all $a_1,a_2,\ldots,a_m\in S$, meaning that under $\Phi$, a deviation from $F$ may occur independently at each site with probability at most $\varepsilon$.  In other words, $\Phi$ is an $\varepsilon$-perturbation of $F$ if
%\begin{align}
%	\Phi\big(x,[(Fx)_K]\big) &\geq (1-\varepsilon)^{\abs{K}}
%\end{align}
%for every configuration $x\in\X$ and every finite set $K\subseteq\ZZ^d$.
%More specific models of noise will be discussed below.

We will be concerned with PCA that are close to being deterministic.
We say that a PCA $\Phi$ is an \emph{$\varepsilon$-perturbation} of a deterministic CA $F$ if $\Phi$ and $F$ share the same alphabet $S$, and have a common neighbourhood $\Neighb$ for which their local rules satisfy
\begin{align}
	\varphi(a_1,a_2,\ldots,a_m)\big(f(a_1,a_2,\ldots,a_m)\big)\geq 1-\varepsilon
\end{align}
for all $a_1,a_2,\ldots,a_m\in S$, meaning that under $\Phi$, a deviation from $F$ may occur independently at each site with probability at most $\varepsilon$.  In other words, $\Phi$ is an $\varepsilon$-perturbation of $F$ if
\begin{align}
	\Phi\big(x,[(Fx)_K]\big) &\geq (1-\varepsilon)^{\abs{K}}
\end{align}
for every configuration $x\in\X$ and every finite set $K\subseteq\ZZ^d$.

\subsection{Ergodicity}
\label{sec:prelim:ergodicity}
A probability measure $\pi\in\M(\X)$ is \emph{invariant} under a PCA $\Phi$ if $\pi\Phi=\pi$.  The compactness of $\M(\X)$ ensures that every PCA has at least one invariant measure (see e.g.~\cite[Prop.~2.5]{TooVasStaMitKurPir90}).
The non-empty set of invariant measures for a PCA is closed and convex.

A PCA $\Phi$ is \emph{ergodic} if it has a unique invariant measure $\pi$ that attracts every initial measure $\mu$, in the sense that $\mu\Phi^t\to\pi$ weakly as $t\to\infty$.
Note that a PCA with a unique invariant measure may not be ergodic~\cite{ChaMai10} (see also~\cite{JahKul15}).
When the convergence is uniform among all initial measures, equivalently, when
$\Phi^t(\,\cdot\,,[u])\to\pi([u])$ uniformly
%\begin{align}
%	\norm{\Phi^t(\,\cdot\,,[u])-\pi([u])} &\to 0
%\end{align}
for each cylinder set $[u]$,
%\begin{alignat}{2}
%	%\Phi^t f &\to \pi(f)	&\quad& \text{(uniformly)}
%	\norm{\Phi^t f-\pi(f)} &\to 0
%\end{alignat}
%for each local observable $f\in C_0(\X)$,
we say that $\Phi$ is \emph{uniformly ergodic}.
%\irene{In the equation above, notation $\norm{\cdot}_{\infty}$ for the norm?}
%Observe that uniform ergodicity is equivalent to each of the following conditions:
%\begin{enumerate}[i)]
%	\item $\Phi^t(x,[u])\to\pi([u])$ for every $x\in\X$ and each cylinder set $[u]$;
%	\item $\abs{\Phi^t(y,[u])-\Phi^t(x,[u])}\to 0$ as $t\to\infty$ for every $x,y\in\X$;
%	\item $\Phi^t f\to \pi(f)$ for every local function $f\in C_0(\X)$;
%	\item $\omega(\Phi^t f)\to 0$ as $t\to\infty $ for every local function $f\in C_0(\X)$.
%\end{enumerate}
It is not known whether a PCA could exist that is ergodic but not uniformly ergodic.  We conjecture that every ergodic PCA is uniformly ergodic.\footnote{%
	Compare this with deterministic CA for which asymptotic nilpotency ($\equiv$ ergodicity)
	is equivalent to nilpotency ($\equiv$ uniform ergodicity)~\cite{GuiRic08,Sal12, MaiMar14_TCS}.
}

Observe that the unique invariant measure of an ergodic PCA is shift-invariant, that is, $\pi\oo\sigma^{-k}=\pi$ for every $k\in\ZZ^d$.
In view of the result of Goldstein et al.~\cite{GolKuiLebMae89}, it seems plausible that the unique invariant measure of a positive-rate ergodic PCA is always \emph{spatially mixing} ($\equiv$ mixing under the shift action), that is,
\begin{align}
	\lim_{k\to\infty}\pi([u]\cap\sigma^{-k}[v]) &= \pi([u])\pi([v])
\end{align}
for every two cylinder sets $[u]$ and $[v]$.
The spatial mixing of the invariant measure is known for certain classes of ergodic PCA (see e.g.~\cite{LebMaeSpe90,Ste91,MaeShl91,Lou04,ColTis10,BusMaiMar13}, as well as~\cite{Vas78, MaiMar14_IHP, CasMar15} in which the unique invariant measure is explicitly known).
However, even the weaker condition of \emph{spatial ergodicity} ($\equiv$ ergodicity under the shift action) is not known for general ergodic PCA.
We now present a more general condition that guarantees the spatial mixing of the unique invariant measure.

Let $\Phi$ be a uniformly ergodic PCA with unique invariant measure $\pi$.
Then, for every finite set $A\subseteq\ZZ^d$, the (maximum) \emph{distance from stationarity} on $A$ at time $t$,
\begin{align}
\label{eq:rate-function}
	d_A(t) &\isdef \sup_{x\in X} \norm{\pi(\cdot)-\Phi^t(x,\cdot)}_A \;,
	%d_A(t) &\isdef \sup_{x\in X,E\in\field{F}_A}\abs{\pi(E)-\Phi^t(x,E)}
\end{align}
decreases to $0$ as $t\to\infty$.
Roughly speaking, the next proposition shows that if the speed at which $d_A(t)$ approaches zero depends only on $\abs{A}$,
but not on the shape of $A$, then the unique invariant measure is spatially mixing.

%Clearly, the functions $d_A(t)$ is \emph{shift-invariant}, so that $d_{k+A}(t)=d_A(t)$ for every $k\in\ZZ^d$.
%If for every finite $A,B\subseteq\ZZ^d$, we have $d_{A\cup B}(t)\leq d_A(t)+d_B(t)$, then we say that the rate functions are \emph{sub-additive}.  Observe that if $\Phi$ has sub-additive rate functions, then for every $u\in S^A$, $x\in\X$ and $t\geq 0$ we have
%\begin{align}
%\label{eq:rate-function:sub-additive}
%	\abs{\Phi^t(x,[u])-\pi([u])} &\leq \abs{A}d(t)
%\end{align}
%where $d(t)\isdef d_{\{0\}}(t)$.

\begin{proposition}[Spatial mixing of unique invariant measure]
\label{prop:spatial-mixing}
	Let $\Phi$ be a uniformly ergodic PCA, and for each finite set $A\subseteq\ZZ^d$,
	let $d_A(t)$ denote the distance from stationarity on $A$ at time $t$.
	Suppose there is a family of functions $\rho_n(t)$, $n\in\NN$ such that
	$d_A(t)\leq \rho_{\abs{A}}(t)$ and $\rho_n(t)\to 0$ as $t\to\infty$.
	Then, the unique invariant measure of $\Phi$ is spatially mixing.
\end{proposition}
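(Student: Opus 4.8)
The plan is to exploit the invariance of $\pi$ together with the finite speed of information propagation in a PCA and the hypothesised shape-independent convergence rate. Fix two cylinders $[u]$ and $[v]$ with bases $U$ and $V$. Note that $\sigma^{-k}[v]$ is a cylinder whose base $V-k$ is a translate of $V$, and that $\pi(\sigma^{-k}[v])=\pi([v])$ because the unique invariant measure is shift-invariant. Since $\pi$ is invariant we may insert $\Phi^t$ for any $t\geq 0$ without changing $\pi$, writing
\[
	\pi\big([u]\cap\sigma^{-k}[v]\big) = \int_\X \Phi^t\big(x,[u]\cap\sigma^{-k}[v]\big)\,\pi(\dd x) .
\]
The idea is to choose $t$ large (to make each window close to stationarity) and $k$ large relative to $t$ (so that the two windows evolve independently), and then to let $t\to\infty$.

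The key structural step is a factorization coming from finite speed of propagation. I would realize an evolution of $\Phi$ as a deterministic function of the initial configuration and an i.i.d.\ field of update variables, one per site and per time step, where the update at a site uses only its own variable and the symbols on its neighbourhood at the current time. Tracing the dependence backwards, $X^t_U$ is a function of $x$ restricted to $U+\Neighb^t$ and of the update variables at the spacetime sites whose spatial projection lies in $\bigcup_{0\leq r\leq t}(U+\Neighb^r)$; the analogous statement holds for $X^t_{V-k}$. For fixed $t$ these two cones involve disjoint sets of sites once $\abs{k}$ is large, since $\bigcup_{0\leq r\leq t}(U+\Neighb^r)$ and $\bigcup_{0\leq r\leq t}((V-k)+\Neighb^r)$ are bounded translates that separate as $\abs{k}\to\infty$. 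When the cones are disjoint, $X^t_U$ and $X^t_{V-k}$ are functions of disjoint independent families of update variables, hence are conditionally independent given $X^0=x$, yielding
\[
	\Phi^t\big(x,[u]\cap\sigma^{-k}[v]\big) = \Phi^t\big(x,[u]\big)\cdot\Phi^t\big(x,\sigma^{-k}[v]\big)
\]
uniformly in $x$, for all $\abs{k}\geq k_0(t)$.

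Next I would approximate each factor by its stationary value. Both factors lie in $[0,1]$, and from $\abs{\Phi^t(x,E)-\pi(E)}\leq d_A(t)$ for $E\in\field{F}_A$ together with the hypothesis,
\[
	\bigabs{\Phi^t(x,[u])-\pi([u])}\leq d_U(t)\leq\rho_{\abs{U}}(t),
	\qquad
	\bigabs{\Phi^t(x,\sigma^{-k}[v])-\pi([v])}\leq d_{V-k}(t)\leq\rho_{\abs{V}}(t),
\]
where the second bound is uniform in $k$ \emph{precisely} because $\rho$ depends on $A$ only through $\abs{A}=\abs{V-k}=\abs{V}$; this is exactly the role of the shape-independence assumption. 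Applying $\abs{ab-a'b'}\leq\abs{a-a'}+\abs{b-b'}$ for numbers in $[0,1]$ to the factorized integrand and integrating against $\pi$ gives, for every $\abs{k}\geq k_0(t)$,
\[
	\bigabs{\pi\big([u]\cap\sigma^{-k}[v]\big)-\pi([u])\pi([v])}\leq \rho_{\abs{U}}(t)+\rho_{\abs{V}}(t).
\]
Taking $\limsup_{k\to\infty}$ (permissible since the bound holds for all large $\abs{k}$) and then letting $t\to\infty$, the right-hand side tends to $0$, which is the desired spatial mixing.

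The main obstacle is making the finite-speed factorization of Step~1 rigorous, i.e.\ the conditional independence of $X^t_U$ and $X^t_{V-k}$ given the initial configuration when their backward light cones are spatially disjoint. I would isolate this as a self-contained lemma, proved either via the i.i.d.-noise representation above or by induction on $t$ using the one-step product structure of $\Phi$, and separately verify the elementary set-theoretic fact that $\bigcup_{0\leq r\leq t}(U+\Neighb^r)$ and $\bigcup_{0\leq r\leq t}((V-k)+\Neighb^r)$ are disjoint for all sufficiently large $\abs{k}$. Everything else is a routine estimate.
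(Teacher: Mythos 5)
Your proof is correct and follows essentially the same route as the paper's: factorize $\Phi^t\big(x,[u]\cap\sigma^{-k}[v]\big)$ into a product once the backward cones of the two windows are disjoint, approximate each factor by its stationary value using the shape-independent rate $\rho_{\abs{A}}(t)$, and let $k\to\infty$ then $t\to\infty$. The only (harmless) cosmetic difference is that you pass through $\pi=\pi\Phi^t$ and integrate, which lets you avoid invoking the hypothesis for the union of the two bases (the paper's $\rho_{\abs{A}+\abs{B}}(t)$ term), whereas the paper compares the joint probability directly to $\Phi^t(x,\cdot)$ at a fixed $x$.
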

\begin{proof}
	Let $\pi$ be the unique invariant measure of $\Phi$ and $\Neighb$
	the neighbourhood of its local rule. 
	Consider two finite patterns $u\in S^A$ and $v\in S^B$, and let $k\in\ZZ^d$.
	Then,
	\begin{align}
		\label{eq:prop:exp-ergodic:mixing:proof}
		\abs{\pi([u])-\Phi^t(x,[u])} &\leq \rho_{\abs{A}}(t) \;, \\
		\abs{\pi(\sigma^{-k}[v])-\Phi^t(x,\sigma^{-k}[v])} &\leq \rho_{\abs{B}}(t) \;, \\
		\abs{\pi\big([u]\cap\sigma^{-k}[v]\big)-\Phi^t\big(x,[u]\cap\sigma^{-k}[v]\big)}
			&\leq \rho_{\abs{A}+\abs{B}}(t) \;.
	\end{align}
	Let $r,a,b\geq 0$ be such that $\Neighb\subseteq[-r,r]^d$, $A\subseteq[-a,a]^d$
	and $B\subseteq[-b,b]^d$.
	Observe that if $t\geq 0$ is such that $\Neighb^i(A)\cap \big(k+\Neighb^i(B)\big)=\varnothing$ for each $0\leq i\leq t$, then
	\begin{align}
		\Phi^t\big(x,[u]\cap\sigma^{-k}[v]\big) &=
			\Phi^t(x,[u])\Phi^t(x,\sigma^{-k}[v]) \;.
	\end{align}
	This is because, given $x$, the random choices used to determine the patterns on $A$ and $k+B$
	at time $t$ are independent (see Fig.~\ref{fig:mixing}).
	Thus, choosing $t\leq t_k\isdef\big\lfloor(\norm{k}_\infty-a-b)/(2r)\big\rfloor$,
	we can write
	\begin{align}
		\abs{\pi\big([u]\cap\sigma^{-k}[v]\big)-\pi([u])\pi(\sigma^{-k}[v])} %\nonumber \\
		&\leq
			\abs{\pi\big([u]\cap\sigma^{-k}[v]\big)-\Phi^t\big(x,[u]\cap\sigma^{-k}[v]\big)} \nonumber \\
		&\qquad
				+ \Phi^t(x,\sigma^{-k}[v])\abs{\pi([u])-\Phi^t(x,[u])} \\
		&\qquad
			+ \pi([u])\abs{\pi(\sigma^{-k}[v])-\Phi^t(x,\sigma^{-k}[v])} \nonumber \\
		&\leq
			\rho_{\abs{A}+\abs{B}}(t) + \rho_{\abs{A}}(t) + \rho_{\abs{B}}(t) \;.
	\end{align}
	Observe that $t_k\to\infty$ as $\norm{k}_\infty\to\infty$.
	Hence,
	\begin{align}
		\MoveEqLeft
		\limsup_{k\to\infty}\abs{\pi\big([u]\cap\sigma^{-k}[v]\big)-\pi([u])\pi([v])} \nonumber \\
		&\leq
			\lim_{t\to\infty}\left[\rho_{\abs{A}+\abs{B}}(t)
			+ \rho_{\abs{A}}(t) + \rho_{\abs{B}}(t)\right] = 0 \;. \\
		& \qedhere
	\end{align}
	
\begin{figure}[h]
\begin{center}
\begin{tikzpicture}
\draw[ultra thin] (-5.25,0) -- (5.25,0) ;
\draw[ultra thin] (-5.25,-2) -- (5.25,-2) ;
\node[left,overlay] at (-5.75,0) {time $t$} ;
\node[left,overlay] at (-5.75,-2) {time $0$} ;
\draw[ultra thick, blue] (-3,0) --  (-2,0) ;
\node[above] at (-2.5,0) {$A$};
\node[below] at (-2.5,-2) {$\Neighb^t(A)$};
\draw[blue] (-3,0) --  (-5,-2) ;
\draw[blue] (-2,0) --  (0,-2) ;
\draw[ultra thick, blue] (-5,-2) --  (0,-2) ;
\draw[ultra thick, blue] (2.5,0) --  (3,0) ;
\node[above] at (2.75,0) {$k+B$};
\node[below] at (2.75,-2) {$k+\Neighb^t(B)$};
\draw[blue] (2.5,0) --  (0.5,-2) ;
\draw[blue] (3,0) --  (5,-2) ;
\draw[ultra thick, blue] (0.5,-2) --  (5,-2) ;
%\draw[<->, thick] (-5.5,-2) -- (-5.5,0) ;
%\node at (-6, -1) {$T$} ;
\end{tikzpicture}
\end{center}
\caption{Illustration of the proof of Prop.~\ref{prop:spatial-mixing}.}\label{fig:mixing}
\end{figure}
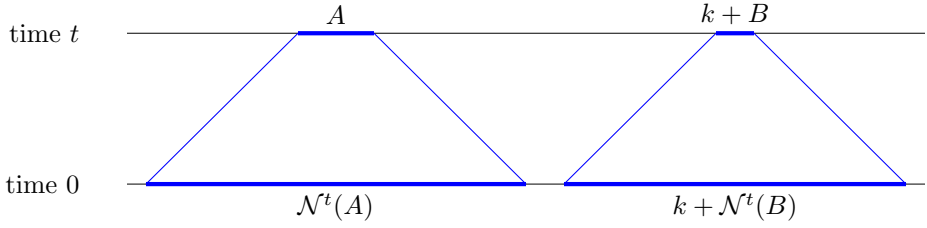
\end{proof}

All the ergodic PCA appearing in this paper satisfy the hypothesis of the above proposition.
It would be interesting to know whether the unique invariant measures of these PCA satisfy any stronger mixing property, such as the strong mixing property of extremal Gibbs measures~\cite[Sec.~7.1]{Geo88}.  A similar argument as above shows that under the hypothesis of Proposition~\ref{prop:spatial-mixing}, the unique invariant measure is spatially $k$-fold mixing for all $k$.  We conjecture that for all the classes of ergodic PCA studied in this paper (with the possible exception of those in Section~\ref{sec:gliders}), the unique invariant measure is in fact measure-theoretically isomorphic to a Bernoulli process (see~\cite{Ste91,BerSte99}). 

A rather different type of question about a probability measure on $\X$ is whether the probabilities it associates to cylinder sets can be computed by an algorithm.  It turns out that the unique invariant measure of an ergodic PCA is always computable provided the transition probabilities of the PCA are computable numbers.
A real number $x$ is said to be \emph{computable} if it can be approximated with arbitrary accuracy using an algorithm, that is if there exists a computable function $f_x:\NN\to\QQ$ such that $\abs{f_x(n)-x}<\nicefrac{1}{n}$ for all $n\in\NN$.  We say that a PCA $\Phi$ is \emph{computable} if the values of its local rule $\varphi$ are computable real numbers.
%A probability measure $\mu$ is \emph{computable} if the probability it associates to cylinder sets can be approximated with arbitrary accuracy using an algorithm.
Let $S^\#$ denote the set of patterns $u\in S^A$ where $A\subseteq\ZZ^d$ is finite.
A measure $\mu\in\M(\X)$ is \emph{computable} if there exists a computable function $f_\mu:S^\#\times\NN\to\QQ$ such that $\abs{f_\mu(u,n)-\mu([u])}<\nicefrac{1}{n}$ for every $u\in S^\#$ and $n\in\NN$.
Observe that if $\Phi$ is a computable PCA and $\mu$ is a computable measure, then $\mu\Phi$ is also a computable measure.

\begin{proposition}[Computability of unique invariant measure]
\label{prop:computability}
	Let $\Phi$ be a computable PCA with a unique invariant measure $\pi$.
	Then, $\pi$ is computable.
	%The unique invariant measure of an ergodic computable PCA is computable. 
\end{proposition}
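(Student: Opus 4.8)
The plan is to show that $\pi$ is a \emph{computable point} of the compact metrizable space $\M(\X)$, from which the computability of the cylinder probabilities follows at once, since the evaluations $\mu\mapsto\mu([u])$ form a computable family of continuous functionals and $\M(\X)$ carries a canonical computable metric (with a computable dense sequence given, say, by Bernoulli or eventually-periodic measures with rational weights). Two computable ingredients are available at the outset. First, as already observed, if $\mu$ is a computable measure then $\mu\Phi^t$ is computable \emph{uniformly} in $t$. Second, since $\mu\mapsto\mu\Phi$ is continuous on the compact space $\M(\X)$, the Ces\`aro averages $A_T\mu\isdef\frac1T\sum_{t=0}^{T-1}\mu\Phi^t$ have weak limit points, each of which is invariant (a Krylov--Bogolyubov argument: $(A_T\mu)\Phi-A_T\mu=\frac1T(\mu\Phi^T-\mu)\to 0$); as $\Phi$ has a \emph{unique} invariant measure, every limit point equals $\pi$, so $A_T\mu\to\pi$ weakly. (The detour through Ces\`aro averages is needed because, $\Phi$ not being assumed ergodic, $\mu\Phi^t$ itself may fail to converge.) Thus $\pi$ is the weak limit of an explicit computable sequence of measures --- but \emph{a priori} with no control on the rate.

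To supply the missing effectivity, I would encode uniqueness as an \emph{invariance defect} functional. Fix an enumeration $A_1,A_2,\ldots$ of the finite subsets of $\ZZ^d$ and set $D(\mu)\isdef\sum_{i\geq 1}2^{-i}\norm{\mu-\mu\Phi}_{A_i}$. The functional $D:\M(\X)\to[0,1]$ is continuous with a computable modulus of continuity and is computable on computable measures (each $\norm{\mu-\mu\Phi}_{A_i}$ is a finite combination of cylinder probabilities, and $\mu\Phi$ is computable). Moreover $D(\mu)=0$ if and only if $\mu\Phi=\mu$, i.e.\ if and only if $\mu=\pi$, by uniqueness. Consequently $\{\pi\}=D^{-1}(0)$ is an \emph{effectively closed} subset of $\M(\X)$: for any basic constraint bounding finitely many cylinder probabilities, one can semi-decide whether the corresponding compact slab is disjoint from $\{\pi\}$, simply by evaluating $D$ on a sufficiently fine rational net of the slab and exhibiting a positive lower bound, which exists --- by compactness --- precisely when the slab misses $\pi$.

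The proof then combines ``approach'' and ``exclusion'' by dovetailing. To compute $\pi([u])$ within $1/n$ for a cylinder $[u]$ with finite base $K$: on the one hand, run the Ces\`aro averages to produce candidate values $A_T\mu([u])\to\pi([u])$; on the other hand, for each rational $v$ in a fine net of $[0,1]$, run the semi-decision procedure certifying that the slab $\{\nu\in\M(\X):\abs{\nu([u])-v}\leq\delta\}$ is disjoint from $\{\pi\}$, i.e.\ that $\abs{\pi([u])-v}>\delta$. Every $v$ bounded away from $\pi([u])$ is eventually excluded, while those near $\pi([u])$ are never excluded; waiting until the surviving candidates cluster in an interval of length $1/n$ isolates $\pi([u])$ to the required accuracy. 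Abstractly, this is the classical fact that a singleton effectively closed set in an effectively compact computable metric space has a computable element; the Ces\`aro approximations guarantee the search \emph{locates}, rather than merely brackets, the point.

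The heart of the matter --- and the step I expect to be the only real obstacle --- is precisely the \emph{absence of a computable convergence rate}. Uniqueness together with compactness yields, for every window $A$ and every $\delta>0$, a threshold $\eta>0$ such that $D(\mu)<\eta$ forces $\norm{\mu-\pi}_A<\delta$ (otherwise a sequence with $D\to 0$ would, by compactness and continuity of $D$, have a limit point $\mu_*$ with $D(\mu_*)=0$ but $\norm{\mu_*-\pi}_A\geq\delta$, contradicting $\mu_*=\pi$). This $\eta$ is delivered by a compactness argument and is not computable from $\delta$, so merely driving $D(A_T\mu)$ to $0$ at a known rate does not by itself certify closeness to $\pi$. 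The device above circumvents this: rather than estimating $\eta$, one turns the compactness statement into the \emph{semi-decidability of exclusions}, which, paired with the approachability of $\pi$ via the computable Ces\`aro averages, is exactly enough to render each cylinder probability a computable real.
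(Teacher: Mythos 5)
Your proof is correct, and it reaches the conclusion by a recognizably different packaging of the same underlying mechanism. The paper works from the ``inside'': it builds, for each window $B$ and precision $m$, an explicit finite set $\xspace{R}_{B,m}$ of rational empirical measures that are themselves nearly invariant on $B$ (and that $\nicefrac{1}{(3m)}$-cover all such measures, in particular $\pi$), and it halts once the values $\nu([w])$ over this set cluster --- with termination guaranteed by exactly the compactness-plus-uniqueness argument you give. You work from the ``outside'': you package near-invariance into the defect functional $D$, observe that $\{\pi\}=D^{-1}(0)$ is effectively closed in the effectively compact space $\M(\X)$, and semi-decide exclusions of slabs; this is the dual formulation, and your exclusion certificate (a positive lower bound for $D$ on a fine rational net of a slab) is obtained by the same compactness step. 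What your framing buys is generality --- it is literally the statement that a $\Pi^0_1$ singleton in an effectively compact computable metric space is a computable point --- at the cost of some machinery. One remark: your Ces\`aro-average ``approach'' component is superfluous. The exclusion search alone already computes $\pi([u])$: a finite $\delta$-net of $[0,1]$ always retains at least one never-excluded value within $\delta$ of $\pi([u])$, every value farther than $\delta$ away is eventually excluded, and waiting until the survivors span an interval of length $\nicefrac{1}{n}$ both terminates and isolates $\pi([u])$ --- which is precisely the paper's stopping rule. You do not need a convergent sequence to ``locate'' the point, only the non-emptiness of the surviving set, which is automatic since $\pi$ itself is never excluded. (Correspondingly, the paper never iterates $\Phi$ forward at all.)
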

\begin{proof}
	We first present the sketch of the proof and then get into more details.
	Let $w\in S^A$ be a finite pattern and suppose we want to approximate $\pi([w])$ within accuracy~$\nicefrac{1}{n}$.
	The idea is that for every finite set $B\supseteq A$, we can approximately identify the set of measures that are close to being invariant when restricted to the $\sigma$-algebra $\field{F}_B$ of events happening on $B$.  More specifically, for $B\supseteq A$ and $m\in\NN$, let
	\begin{align}
		\xspace{Q}_{B,m} &\isdef
			\big\{\mu\in\M(\X): \norm{\mu\Phi-\mu}_B<\nicefrac{1}{m}\big\} \;.
	\end{align}
	We will show that given $B$ and $m\geq n$, we can algorithmically generate a finite set $\xspace{R}_{B,m}$ of representatives from $\xspace{Q}_{B,m}$	such that for every $\mu\in\xspace{Q}_{B,3m}$, there is a $\nu\in\xspace{R}_{B,m}$ with $\norm{\nu-\mu}_B<\nicefrac{1}{(3m)}<\nicefrac{1}{(2n)}$.
	A compactness argument will show that for all sufficiently large $B$ and $m$, every two measures $\nu,\nu'\in\xspace{R}_{B,m}$ associate approximately the same probabilities to the cylinder set $[w]$, namely $\abs{\nu'([w])-\nu([w])}<\nicefrac{1}{(2n)}$.
	Since $\pi\in\xspace{Q}_{B,3m}$, it will then follow that for each $\nu\in\xspace{R}_{B,m}$, the value $\nu([w])$ approximates $\pi([w])$ with accuracy~$\nicefrac{1}{n}$.

	More precisely, the algorithm thus proceeds as follows.
	Denote by $I_k\isdef[-k,k]^d\cap\ZZ^d$ the centered hypercube of size $(2k+1)^d$ in $\ZZ^d$.
	We choose $m_0$ such that $I_{m_0}\supseteq A$.  For $m=m_0, m_0+1, \ldots$ we generate a set $\xspace{R}_{I_m,m}$ with the above-mentioned property and calculate $\varepsilon\isdef\max\{\abs{\nu'([w])-\nu([w])}: \nu,\nu'\in\xspace{R}_{I_m,m}\}$.
	Once $\varepsilon<\nicefrac{1}{(2n)}$, we stop and return $\nu([w])$ for an arbitrarily chosen element of $\xspace{R}_{I_m,m}$.
	
	Let us first show that $\varepsilon$ will eventually become smaller than $\nicefrac{1}{(2n)}$.
	Indeed, suppose that for every~$m$, there are two elements $\mu_m,\mu'_m\in\xspace{Q}_{I_m,m}$ such that $\abs{\mu'_m([w])-\mu_m([w])}\geq\nicefrac{1}{(2n)}$.  By compactness, there is a sequence $m_1<m_2<\ldots$ such that $\mu_{m_i}$ converges weakly to a measure $\mu$ and $\mu'_{m_i}$ converges weakly to a measure $\mu'$.  Clearly $\abs{\mu'([w])-\mu([w])}\geq\nicefrac{1}{(2n)}$ and in particular, $\mu'\neq\mu$.  On the other hand, from the definition of $\xspace{Q}_{B,m}$, it follows that both $\mu$ and $\mu'$ must be invariant under $\Phi$, hence a contradiction with the uniqueness of the invariant measure.
	
	It remains to show that for each $B$ and $m$, a set $\xspace{R}_{B,m}$ with the prescribed properties can be generated.
	The PCA $\Phi$ induces an affine mapping from probability measures on $S^{\Neighb(B)}$ to probability measures on $S^B$.
%	given by a stochastic matrix  $\varphi_A:S^{\Neighb(A)}\times S^A\to[0,1]$ where $\varphi_A(u,v)\isdef\xPr\big(X^{t+1}\in[v]\,\big|\,X^t\in[u]\big)$ is the probability that after one iteration, we see the pattern $v$ on $A$ if we start with a configuration with pattern $u$ on $\Neighb(A)$.
%	Note that for every $\mu,\mu'\in\M(\X)$ we have
%	\begin{align}
%		\sum_{u\in S^B}\bigabs{(\mu'\Phi)([u])-(\mu\Phi)[v]} &=
%			\sum_{u\in S^B}\biggabs{\sum_{v\in S^{\Neighb(B)}}\big(\mu'([v])-\mu([v])\big)\varphi_B(v,u)} \\
%		&\leq
%			\sum_{v\in S^{\Neighb(B)}}\bigabs{\mu'([v])-\mu([v])}\sum_{u\in S^B}\varphi_B(v,u) \\
%		&=
%			\sum_{v\in S^{\Neighb(B)}}\bigabs{\mu'([v])-\mu([v])} \;.
%	\end{align}
	It follows easily that $\norm{\mu'\Phi-\mu\Phi}_B\leq\norm{\mu'-\mu}_{\Neighb(B)}$ for every $\mu,\mu'\in\M(\X)$.
	Fix an arbitrary symbol $\blank\in S$, and for finite $C\subseteq\ZZ^d$ and $k\in\NN$, define
	\begin{align}
		\M_{C,k} &\isdef \bigg\{
			\frac{1}{k}\sum_{i=1}^k\delta_{x^{(i)}} \;:\;
			\text{$x^{(1)}, \ldots, x^{(k)}\in\X$ and $x^{(i)}_j=\blank$ for $j\notin C$}
		\bigg\} \;,
	\end{align}
	where $\delta_x$ denotes the Dirac measure centered at $x$.
	When restricted to $\field{F}_C$, the elements of $\M_{C,k}$ are precisely those measures whose probabilities are rational with denominator $k$.  In particular, for every $\mu\in\M(\X)$, there exists a measure $\nu\in\M_{C,k}$ such that $\norm{\nu-\mu}_C<\abs{S}^{\abs{C}}/k$.
	Given $B$ and $m$, construct the set
	\begin{align}
		\xspace{R}_{B,m} &\isdef \big\{
			\nu\in\M_{\Neighb(B),k}: \norm{\nu\Phi-\nu}_B<\nicefrac{1}{m}
		\big\}
	\end{align}
	with $k\isdef 3m\abs{S}^{\abs{\Neighb(B)}}$.
	Clearly, $\xspace{R}_{B,m}\subseteq\xspace{Q}_{B,m}$.
	Let $\mu\in\xspace{Q}_{B,3m}$.  Then, there exists a measure $\nu\in\M_{\Neighb(B),k}$ such that $\norm{\nu-\mu}_{\Neighb(B)}<\abs{S}^{\abs{\Neighb(B)}}/k=\nicefrac{1}{(3m)}$.
	For this measure, we have
	\begin{align}
		\norm{\nu\Phi-\nu}_B &\leq
			\underbrace{\norm{\mu-\nu}_B}_{<\nicefrac{1}{(3m)}} +
			\underbrace{\norm{\mu\Phi-\mu}_B}_{<\nicefrac{1}{(3m)}} +
			\underbrace{\norm{\nu\Phi-\mu\Phi}_B}_{<\nicefrac{1}{(3m)}}
		< \nicefrac{1}{m} \;,
	\end{align}
	which means $\nu\in\xspace{R}_{B,m}$.
	Hence, $\xspace{R}_{B,m}$ has the desired properties.
%
%	The idea is that for every finite set $A\subseteq\ZZ^d$ and each $n\in\NN$, we can algorithmically identify the set of measures $\mu$ that are approximately invariant with accuracy $\nicefrac{1}{(2n)}$ when restricted to the $\sigma$-algebra $\field{F}_A$ of events happening on $A$, that is, $\norm{\mu\Phi-\mu}_A<\nicefrac{1}{(2n)}$.  A compactness argument then shows that for a sufficiently large $A$, every such measure with this property approximates $\pi$ on $A$ with accuracy $\nicefrac{1}{n}$.
%	
%	
%	Let $I_k\isdef[-k,k]^d\cap\ZZ^d$.  The idea is that for every $n\in\NN$, we can algorithmically identify the set of measures that are approximately invariant with accuracy $\nicefrac{1}{n}$
%
%
%
%	Let $\varphi$ denote the local rule of $\Phi$ and $\Neighb$ its neighbourhood.
%	For each finite set $A\subseteq\ZZ^d$, the PCA $\Phi$ identifies a stochastic matrix $\varphi_A:S^{\Neighb(A)}\times S^A\to[0,1]$ where $\varphi_A(u,v)\isdef\xPr\big(X^{t+1}\in[v]\,\big|\,X^t\in[u]\big)$ is the probability that after one iteration, we see the pattern $v$ on $A$ if we start with a configuration with pattern $u$ on $\Neighb(A)$.
%	
\end{proof}

\subsection{Models of noise}
\label{sec:prelim:noise}

In this article, we study the ergodicity problem for perturbations of deterministic CA.
We mainly focus on perturbations obtained when adding random and independent errors to the updates of a deterministic CA.
The transition probabilities of the resulting PCA will thus have the form $\Phi(x,E)\isdef \Theta(Fx, E)$, where $F$ is the deterministic CA and $\Theta$ a \emph{noise} kernel.  We call such a perturbation a \emph{noisy} version of $F$.  The noise kernel is itself assumed to be a PCA transition kernel (albeit a simple one) so that the updates of the symbols at distinct sites are independent.
%The PCA $\Phi$ obtained in this fashion will be an $\varepsilon$-perturbation if $N\big(x,[x_{\{0\}}]\big)\geq 1-\varepsilon$ for every configuration $x\in\X$.
The noise is said to be \emph{positive} if its kernel has positive rates.

\paragraph{Zero-range noise.}
A \emph{zero-range} noise is a noise with neighbourhood $\Neighb=\{0\}$: the symbol at each site is randomly modified, independently of the other sites, according to transition probabilities prescribed by a stochastic matrix $\theta:S\times S\to [0,1]$.  The local rule of the noisy CA is therefore given by $\varphi(a_1,a_2,\ldots,a_m)(b)\isdef \theta\big(f(a_1,a_2,\ldots,a_m),b\big)$, where $f$ is the local rule of the original CA.  %We will use $\theta(a,b)$ and $\theta_{a,b}$ interchangeably.

Most of the results in this paper (with the exception of Sections~\ref{sec:envelope}, \ref{sec:nilpotent} and~\ref{sec:spreading:perturbation}) concern zero-range noise.
Various classes of zero-range noise will be considered, each with its own interpretation.  Each of our proof techniques will be well suited for some of these noise models.

%\paragraph{Doubly-stochastic noise.}
%We call a zero-range noise \emph{doubly-stochastic} if its noise matrix $\theta$ is doubly stochastic, that is, $\sum_{a\in S}\theta(a,b)=1$ for each $b\in S$.  Note that in this case, the matrix $\theta$ preserves the uniform distribution on $S$, that is, $u\theta=u$, where $u(a)\isdef 1/\abs{S}$ for each $a\in S$.  The noise kernel $\Theta$ thus preserves the uniform Bernoulli measure (i.e., the distribution of independent unbiased coin flips).

\paragraph{Memoryless noise.}
A zero-range noise is \emph{memoryless} if its noise matrix can be written as $\theta(a,b)=(1-\varepsilon)\delta_a(b)+\varepsilon q(b)$, where $0\leq\varepsilon\leq 1$, $q$ is a probability distribution on $S$, and $\delta_a$ is the distribution with unit mass at~$a$.
Under a memoryless noise, a symbol is erased with probability $\varepsilon$ and replaced with an independent random symbol drawn from distribution $q$.
We call $\varepsilon$ the \emph{error} probability and $q$ the \emph{replacement} distribution of the noise.
%If $q$ is the uniform distribution, we call the noise an \emph{$\varepsilon$-uniform} noise.
%Observe that every uniform noise is doubly stochastic.

%\paragraph{$\varepsilon$-uniform noise.}

\paragraph{Additive noise.}
Suppose that the alphabet $S$ is identified with a finite Abelian group $(\GG,+)$.  Under an \emph{additive} noise with noise distribution $q$, each symbol $a$ is replaced with a symbol $a+N$, where $N$ is an $S$-valued random variable with distribution $q$.  The noise matrix can thus be written as $\theta(a,b)\isdef q(b-a)$ for each $a,b\in S$.

\paragraph{Permutation noise.}
The \emph{permutation} noise is an extension of additive noise, where each symbol $a$ is replaced with a symbol $\varsigma(a)$, where $\varsigma$ is a random permutation of $S$ drawn according to a fixed distribution $q$.
Observe that the noise matrix of a permutation noise can be written as a convex combination
\begin{align}
	\theta(a,b) &= \sum_{\varsigma\in\SymGroup(S)}q(\varsigma)A_\varsigma(a,b)
\end{align}
of permutation matrices $A_\varsigma(a,b)\isdef\delta_{\varsigma(a)}(b)$, and therefore is a doubly-stochastic matrix.  Conversely, the Birkhoff--von Neumann theorem implies that every zero-range noise with a doubly-stochastic matrix is in fact a permutation noise.  In particular, a permutation noise is precisely a zero-range noise that preserves the uniform distribution on $S$.
The notion of noise in a weakly symmetric communication channel (see~\cite{CovTho91}) is a special case of the permutation noise.

\paragraph{Birth-death noise.}
In some of our examples (see Sections~\ref{sec:gliders}), the alphabet has the form $S=\{\qO,\qX\}^n$, where $\qX$ and $\qO$ represent the presence and absence of ``particles'' or ``walls'' at $n$ different ``layers'' of the system.
Under a \emph{birth-death} noise particles/walls appear and disappear independently at each layer, thus the noise matrix has the form
\begin{align}
	\theta(a,b) &\isdef \prod_{i=1}^n \theta_i(a_i,b_i)
\end{align}
for $a=(a_1,a_2,\ldots,a_n)$ and $b=(b_1,b_2,\ldots,b_n)$ in $S$.

\subsection{Summary of results}
\label{sec:prelim:results}

%We prove different results of ergodicity for noisy versions of deterministic CA having specific dynamics. We sum up below our results.

We prove several results regarding the ergodicity of noisy CA.  Each result concerns a class of CA with a specific dynamical property subject to a specific type of noise.  The following table summarizes our results.  See Figures~\ref{fig:examples:1} and~\ref{fig:examples:2} for examples illustrating the results.

\medskip

%\noindent\begin{tabular}{|m{4cm}|m{5cm}|m{4cm}|m{2cm}|}
%\hline
%{\bf Type of deterministic CA} & {\bf Type of noise} & {\bf Properties of the unique invariant measure} & {\bf Reference} \\
%\hline
%\hline
%Any deterministic CA & High noise & Mixing\newline Computable & Th.~\ref{thm-high} \\
%\hline
%Nilpotent & Small $\varepsilon$-perturbation & Mixing\newline Computable & Th.~\ref{thm-nilp}\\
%\hline
%CA having\newline a spreading state & Memoryless noise & Mixing\newline Computable & Th.~\ref{thm-spreading-zrmn}\\
% & Small positive noise (case $\Neighb=\{0,1\}$) & & Th.~\ref{thm-spreading}\\
%\hline
%Gliders with annihilation \newline Simple gliders with\newline reflecting walls& Birth-death noise & Mixing\newline Computable & Th.~\ref{thm-glider_an} and Th.~\ref{thm-glider_ref}\\
%\hline
%\hline
%Permutive & Permutation noise & Uniform distribution & Th.~\ref{thm-perm}\\
%\hline
%Surjective & Additive noise & Uniform distribution & Th.~\ref{thm-surj}\\
%\hline 
%\hline
%XOR & Zero-range & Mixing and computable & Th.~\ref{thm-xor} \\
%\hline
%Binary CA having\newline a spreading state & Zero-range (75\% of parameter range) & Mixing and computable & Th.~\ref{thm-spreading-bin} \\
%\hline 
%\end{tabular}

\noindent%
\begin{center}
\begin{tabular}{|c||p{0.3\textwidth}|p{0.3\textwidth}|l|} %{|m{4cm}|m{5cm}|m{2cm}|}
	\cline{2-4}
	\multicolumn{1}{c|}{} & {\bf Type of CA} & {\bf Type of noise} & {\bf Reference} \tabularnewline
	\hhline{-::===}
	\nRoman{1} & Any CA & High noise & Thm.~\ref{thm-high} \\
	\hhline{-||---}
	\nRoman{2} & Nilpotent & Small perturbation & Thm.~\ref{thm-nilp}\\
	\hhline{-||---}
	\nRoman{3} & CA with spreading symbol
		& Memoryless noise & Thm.~\ref{thm-spreading-zrmn}\\
	%\hhline{-||---}
	\nRoman{4} & \xditto{CA with spreading symbol}\newline\phantom{.}\hfill(1d with $\Neighb=\{0,1\}$)
		& Small positive perturbation & Thm.~\ref{thm-spreading}\\
	\hhline{-||---}
	\nRoman{5} & Gliders with annihilation & Birth-death noise & Thm.~\ref{thm-glider_an} \\
	%\hhline{-||---}
	\nRoman{6} & Simple gliders with\newline reflecting walls (1d) &
		\xditto{Birth-death noise} & Thm.~\ref{thm-glider_ref} \\
	\hhline{-||---}
	\nRoman{7} & Permutive (1d) & Permutation noise & Thm.~\ref{thm-perm}\\
	\hhline{=::===}
	\nRoman{8} & Surjective & Additive noise & Thm.~\ref{thm-surj}\\
	\hhline{=::===}
	\nRoman{9} & XOR & Zero-range & Thm.~\ref{thm-xor} \\
	\hhline{-||---}
	\nRoman{10} & Binary CA with\newline spreading symbol
		& Zero-range\newline\phantom{.}\hfill(75\% of parameter range) & Thm.~\ref{thm-spreading-bin} \\
	\hhline{-||---} 
\end{tabular}
\end{center}

\medskip

The results are divided into three categories, depending on the type of tools used in their proofs: coupling arguments (Sec.~\ref{sec:coupling}), entropy (Sec.~\ref{sec:entropy}) and Fourier analysis (Sec.~\ref{sec:fourier}).
The ergodicity in the high noise regime (Thm.~\ref{thm-high}) is rather standard and can be proven using various approaches.  Here we present a coupling proof using the so-called envelope PCA (introduced in~\cite{BusMaiMar13}) which we find most elegant.  The nilpotent CA are special in that they are ergodic in absence of noise.  A coupling argument will show that the ergodicity persists for small perturbations of nilpotent CA (Thm.~\ref{thm-nilp}).
The ergodicity of a CA that has a spreading symbol is intuitively plausible.  We provide three different proofs (Thms.~\ref{thm-spreading-zrmn}, \ref{thm-spreading} and~\ref{thm-spreading-bin}) each with a different model of noise and having a different degree of generality.  Theorems~\ref{thm-glider_an} and~\ref{thm-glider_ref} concern the ergodicity of simple systems of ``particles'' (or ``gliders'') moving and interacting on the lattice, where the noise occasionally destroys particles or creates new ones.
%The argument for the ergodicity of permutive CA subject to permutation noise (Thm.~\ref{thm-perm}) is based on the identification of a certain finite-state time-inhomogeneous Markov chain hidden in the model, an idea that goes back to Vasilyev~\cite{Vas78}.
The ergodicity of permutive CA subject to permutation noise (Thm.~\ref{thm-perm}) is a special case of a result of Vasilyev~\cite{Vas78}.  The argument is based on the identification of a certain finite-state time-inhomogeneous Markov chain that is hidden inside the model.  We also present an alternative (though similar) argument using entropy (Sec.~\ref{sec:surj:proof}).  Surjective CA constitute a broad class of CA (including e.g., those addressed in Theorems~\ref{thm-glider_ref}, \ref{thm-perm} and~\ref{thm-xor}). For general surjective CA with additive noise, we are only able to prove ``ergodicity modulo shift'' (Thm.~\ref{thm-surj}), that is the convergence towards equilibrium when the starting measure is shift-invariant.  The ergodicity of the XOR CA subject to noise (Thm.~\ref{thm-xor}) is an application of the Fourier analysis approach to the ergodicity problem developed by Toom et al.~\cite[Chap.~4]{TooVasStaMitKurPir90}.

Aside from cases~\nRoman{7} and~\nRoman{8} in which the invariant measures are explicitly known, in all the classes of ergodic PCA treated in this paper, the unique invariant measure is spatially mixing and computable.  The computability of the unique invariant measure holds in general, as demonstrated in Proposition~\ref{prop:computability}.  The spatial mixing is proven in each case with the help of Proposition~\ref{prop:spatial-mixing}.  Furthermore, Proposition~\ref{prop:coupling} below provides a perfect sampling algorithm for the unique invariant measure in cases~\nRoman{1}--\nRoman{4} and~\nRoman{6}.

Let us remark that except for Theorems~\ref{thm-spreading}, \ref{thm-glider_ref} and~\ref{thm-perm}, all the results in this paper are valid in any number of dimensions.  The proof of Theorem~\ref{thm-spreading} makes use of a result on oriented bond percolation in $1+1$ dimensions, and thus relies crucially on the CA being one-dimensional.  Nevertheless, it might be possible to use the same idea in higher dimensions. Theorems~\ref{thm-glider_ref} and~\ref{thm-perm} are restricted to the one-dimensional case for expositional convenience.  In higher dimensions, the definition of a permutive CA is more cumbersome.

For the sake of comparison, let us now recall an example of a simple CA which, in presence of small noise, remains non-ergodic.
Needless to say, this example belongs to none of the CA families~\nRoman{2}--\nRoman{10} mentioned in the above table.

\begin{example}[NEC-majority]
\label{exp:toom}
	Toom's \emph{NEC-majority} CA is the two-dimensional CA $T:S^{\ZZ^2}\to S^{\ZZ^2}$ with binary alphabet $S\isdef\{\symb{0},\symb{1}\}$, where
	\begin{align}
		(Tx)_{i,j} &\isdef \majority(x_{i,j}, x_{i+1,j}, x_{i,j+1}) \;.
	\end{align}
	In other words, in one iteration of $T$, each symbol on the lattice is replaced with the symbol	that is in majority among its northern neighbour, eastern neighbour and itself.
	Observe that $T$ is \emph{monotonic} (i.e., switching some $\symb{0}$s into $\symb{1}$s	in a configuration $x$ may turn some $\symb{0}$s in $Tx$ into $\symb{1}$s but not the other way around) and \emph{symmetric} with respect to $\symb{0}\leftrightarrow\symb{1}$ exchange.
	Moreover, it can be shown that $T$ has the \emph{erosion} property on the all-$\symb{0}$ configurations (and by symmetry, also on the all-$\symb{1}$ configuration).  Namely, $T$ keeps the all-$\symb{0}$ (resp., all-$\symb{1}$) configuration unchanged, and if $x$ is any configuration in which all but finitely many sites have symbol~$\symb{0}$ (resp., symbol~$\symb{1}$), then there is a finite time $t$ for which $T^tx$ is the all-$\symb{0}$ configuration (resp., the all-$\symb{1}$ configuration).
	
	Toom~\cite{Too80} (see~\cite[Chaps.~9 and~10]{TooVasStaMitKurPir90}) proved that for sufficiently small $\varepsilon>0$,	every $\varepsilon$-perturbation of the NEC-majority CA is non-ergodic.	In fact, he showed that in any monotonic CA $T$, any homogeneous configuration $z$ on which $T$ has the erosion property is \emph{stable} against perturbations in the sense that the trajectory of any small perturbation of $T$ starting from $z$ remains forever concentrated on configurations	that agree with $z$ on the great majority of sites.
	\hfill\exampleqed
\end{example}

\begin{figure}[!ht]
\begin{scriptsize}
\begin{center}
\begin{tabular}{@{}p{0.02\textwidth}@{}p{0.45\textwidth}@{}p{0.05\textwidth}@{}p{0.45\textwidth}@{}}
	& ~~$\varepsilon=0$	&& ~~$\varepsilon=0.01$\\
%	\hline
%	\hline
	%&&&\\

	\begin{sideways}Nilpotent CA (Sec.~\ref{sec:nilpotent})\end{sideways} 
		&\boxed{\includegraphics[width=0.435\textwidth]{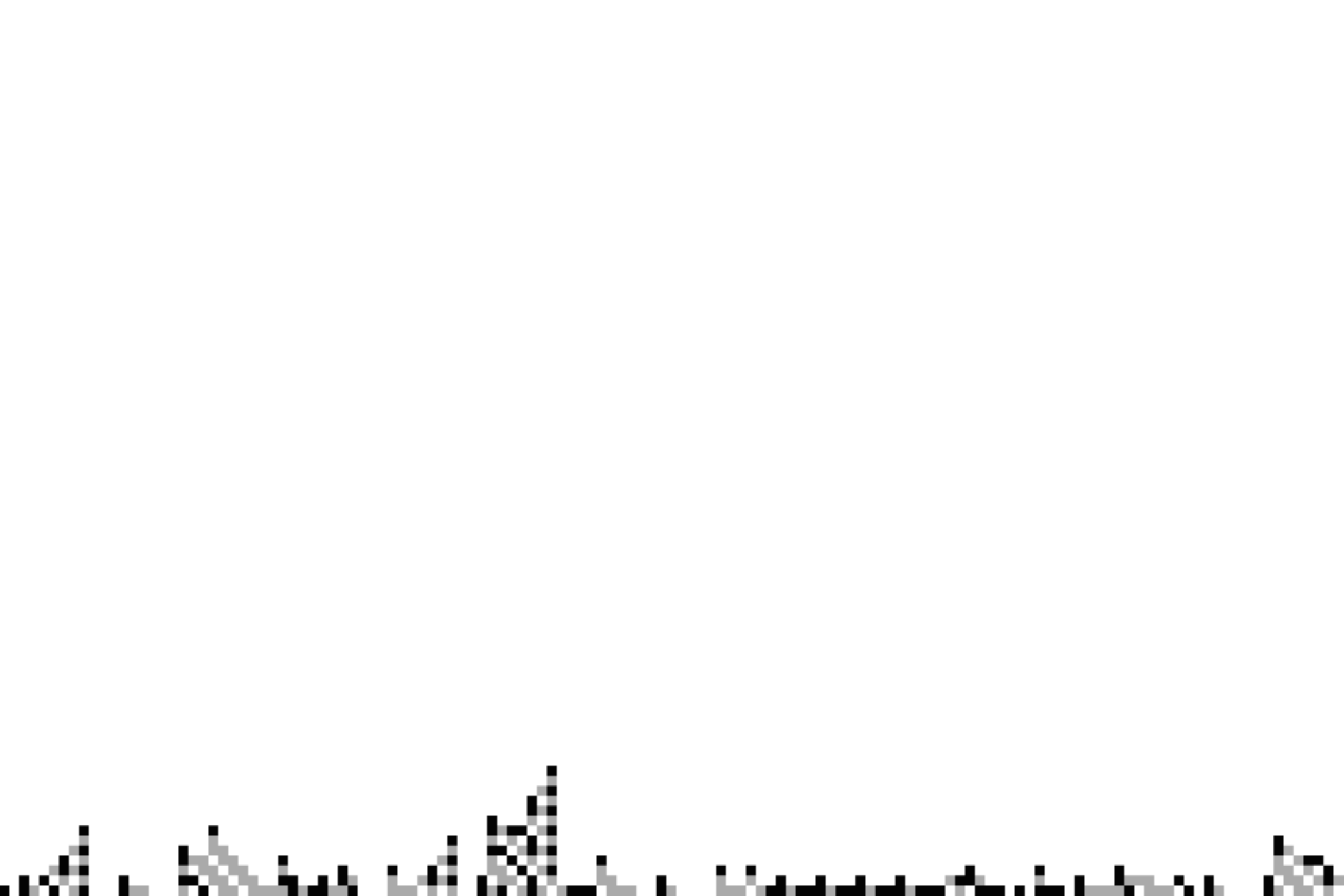}}
		&&\boxed{\includegraphics[width=0.435\textwidth]{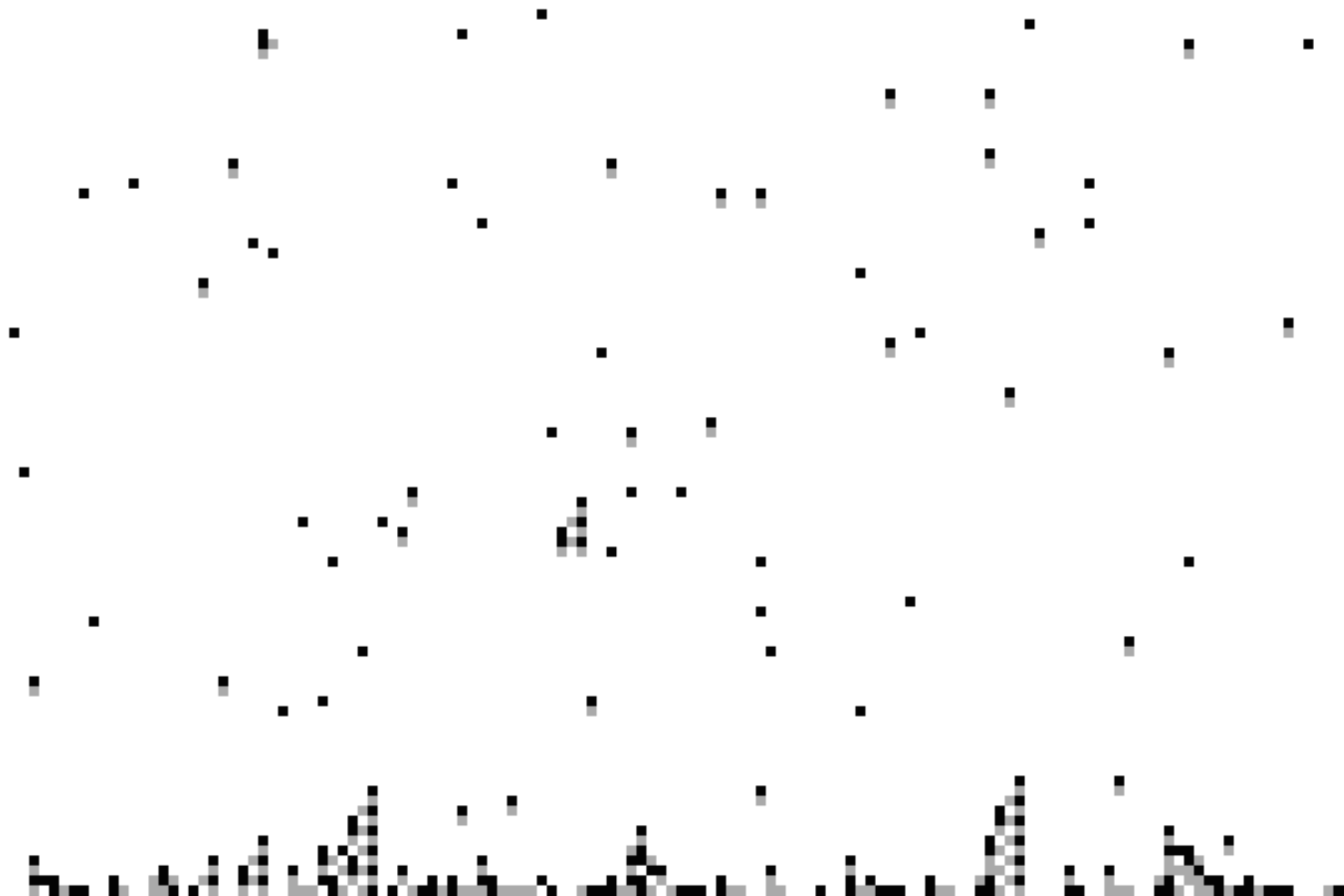}}\\ [1ex]
	&\multicolumn{3}{p{0.92\textwidth}}{This CA satisfies $F^{12}(x)=\symb{0}^\ZZ$ for all $x\in\{\symb{0},\symb{1},\symb{2}\}^\ZZ$. Without noise the system dies out; the noise adds small	local perturbations that do not propagate. Ergodicity is proven in Theorem~\ref{thm-nilp}.}
	\\ [2em]
	%\hline

	\begin{sideways}
		Spreading CA (Sec.~\ref{sec:spreading} and Sec.~\ref{sec:mobius:binary})
	\end{sideways} 
		&\boxed{\includegraphics[width=0.435\textwidth]{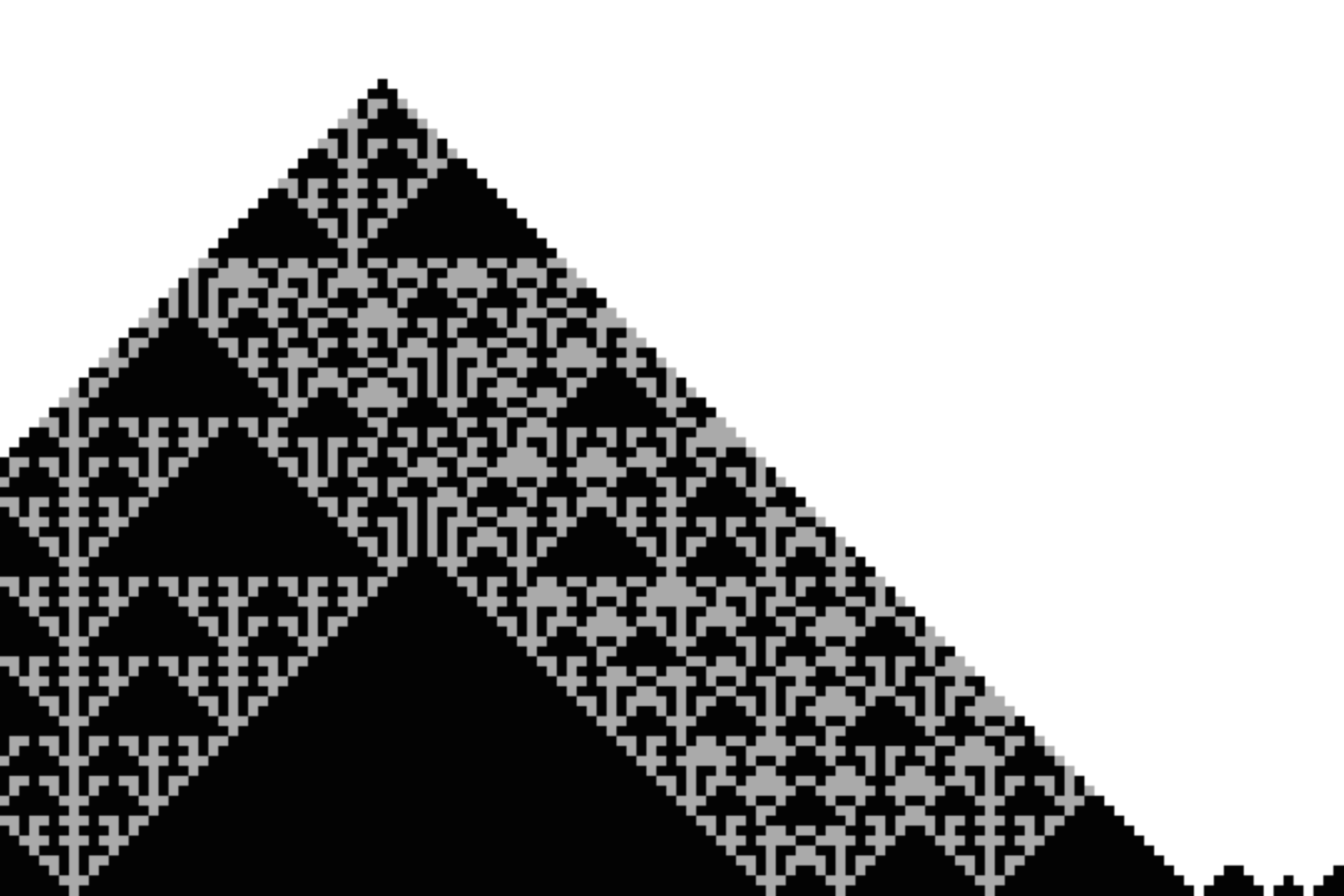}}
		&&\boxed{\includegraphics[width=0.435\textwidth]{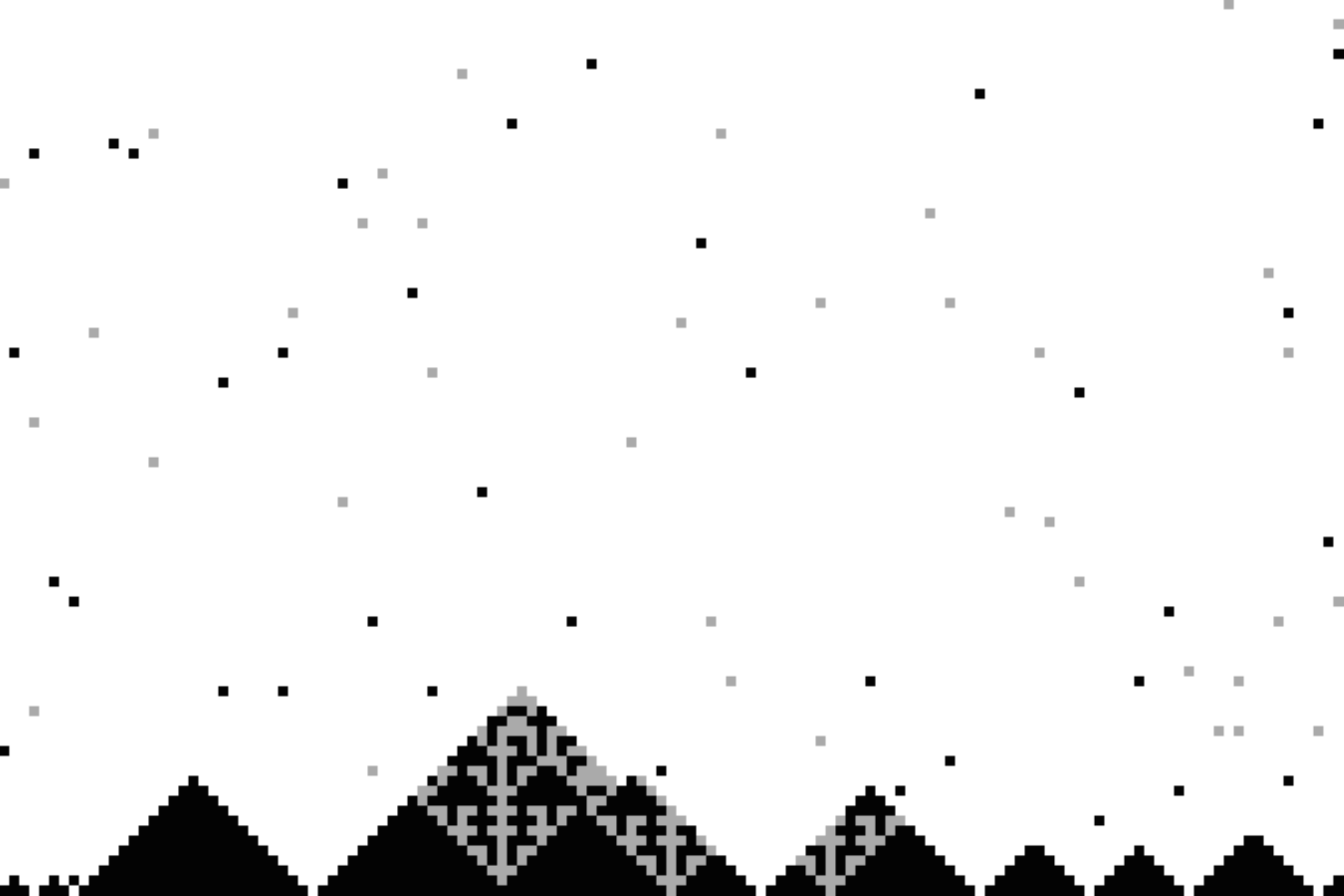}}\\ [1ex]
	&\multicolumn{3}{p{0.92\textwidth}}{The local rule is given by
		$F(x)_i\isdef x_{i-1}x_{i}x_{i+1}\bmod 3$.
	Without noise, fractal patterns can appear but they are unstable because of the spreading symbol.  Noise helps destroying these patterns by introducing the spreading symbol at random positions evenly distributed on the lattice. Ergodicity is established in Theorems~\ref{thm-spreading-zrmn} and~\ref{thm-spreading}.}
	\\ [3em]
	%\hline 
	
	\begin{sideways}Gliders with annihilation (Sec.~\ref{sec:gliders:annihilating})\end{sideways} 
		&\boxed{\includegraphics[width=0.435\textwidth]{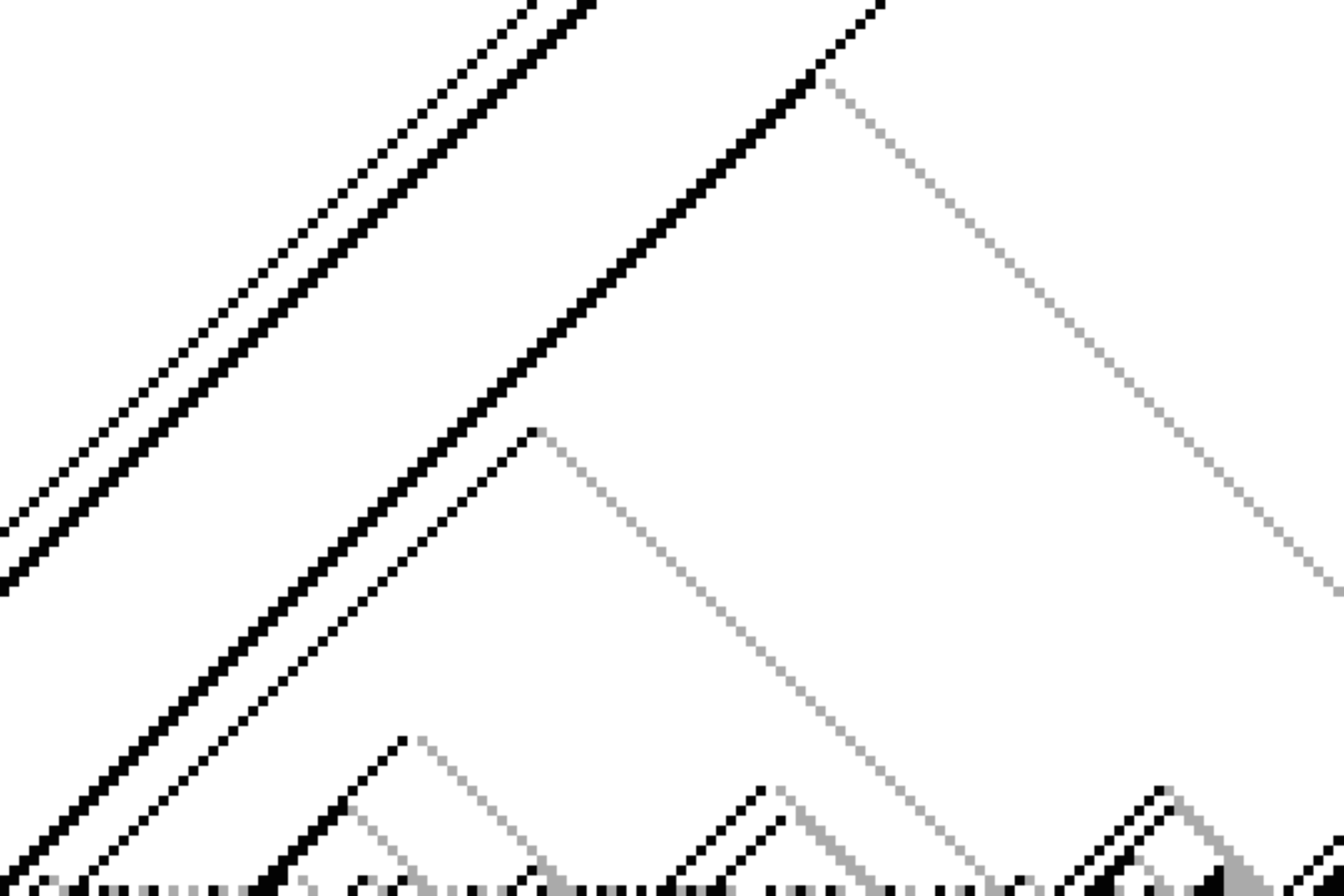}}
		&&\boxed{\includegraphics[width=0.435\textwidth]{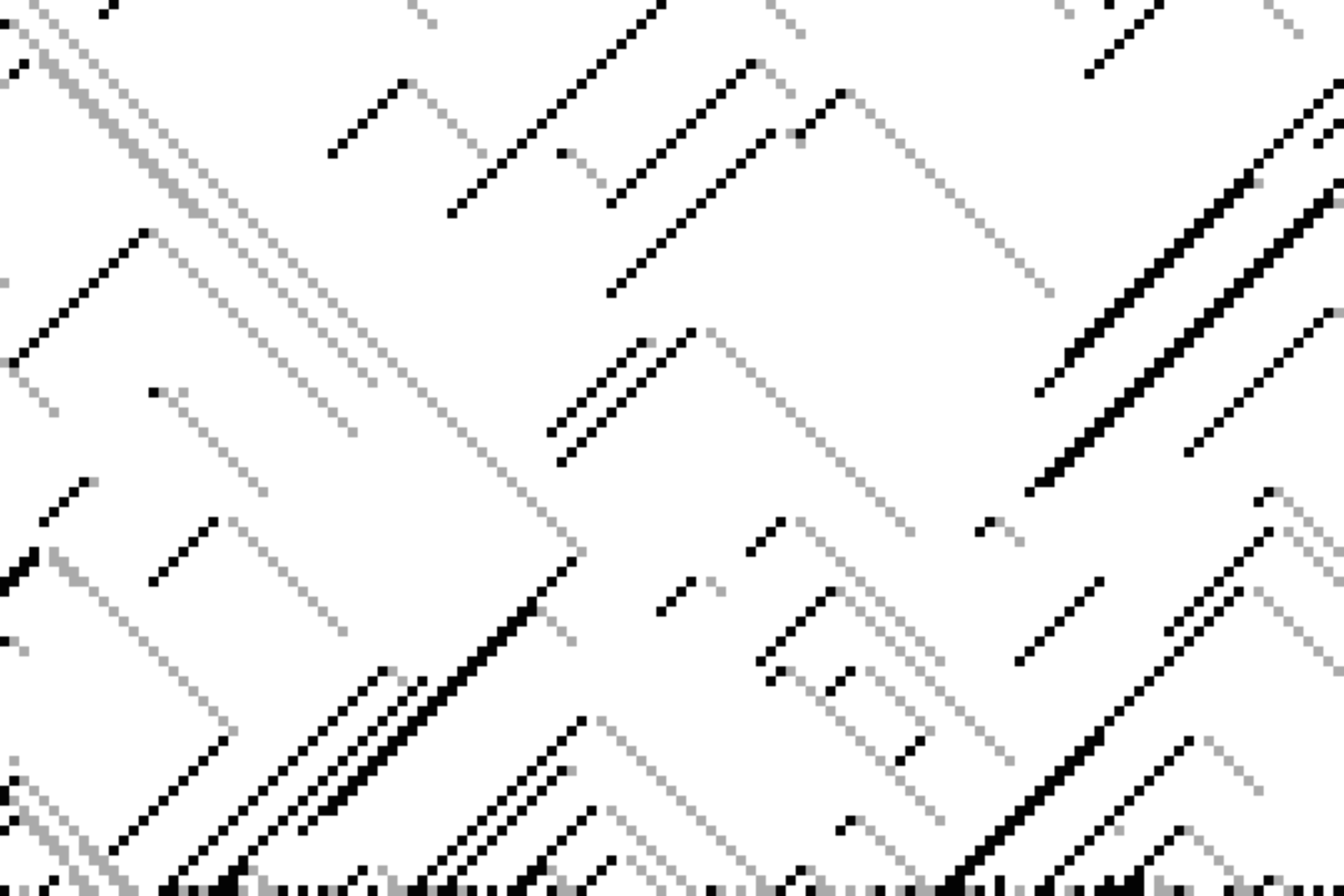}} \\ [1ex]
	&\multicolumn{3}{p{0.92\textwidth}}{This CA consists of particles moving with speed $1$ and $-1$ in an empty background. Two particles moving in opposite directions annihilate upon encounter. Without noise, there are less and less particles as time passes by. Ergodicity in presence of noise is established in Theorems~\ref{thm-glider_an} and~\ref{thm-spreading-bin}.}
	%\\
\end{tabular}
\end{center}
\end{scriptsize}

\caption{%
	Space-time diagrams of some CA perturbed by a memoryless noise with uniform replacement distribution and error probability $\varepsilon$.  Time goes upwards.}
\label{fig:examples:1}
\end{figure}

\begin{figure}[!ht]
\begin{scriptsize}
\begin{center}
\begin{tabular}{@{}p{0.02\textwidth}@{}p{0.45\textwidth}@{}p{0.05\textwidth}@{}p{0.45\textwidth}@{}}
	& ~~$\varepsilon=0$	&& ~~$\varepsilon=0.01$\\
%	\hline
%	\hline
%	&&&\\
	
	\begin{sideways}Gliders with walls (Sec.~\ref{sec:gliders:walls})\end{sideways} 
	& \boxed{\includegraphics[width=0.435\textwidth]{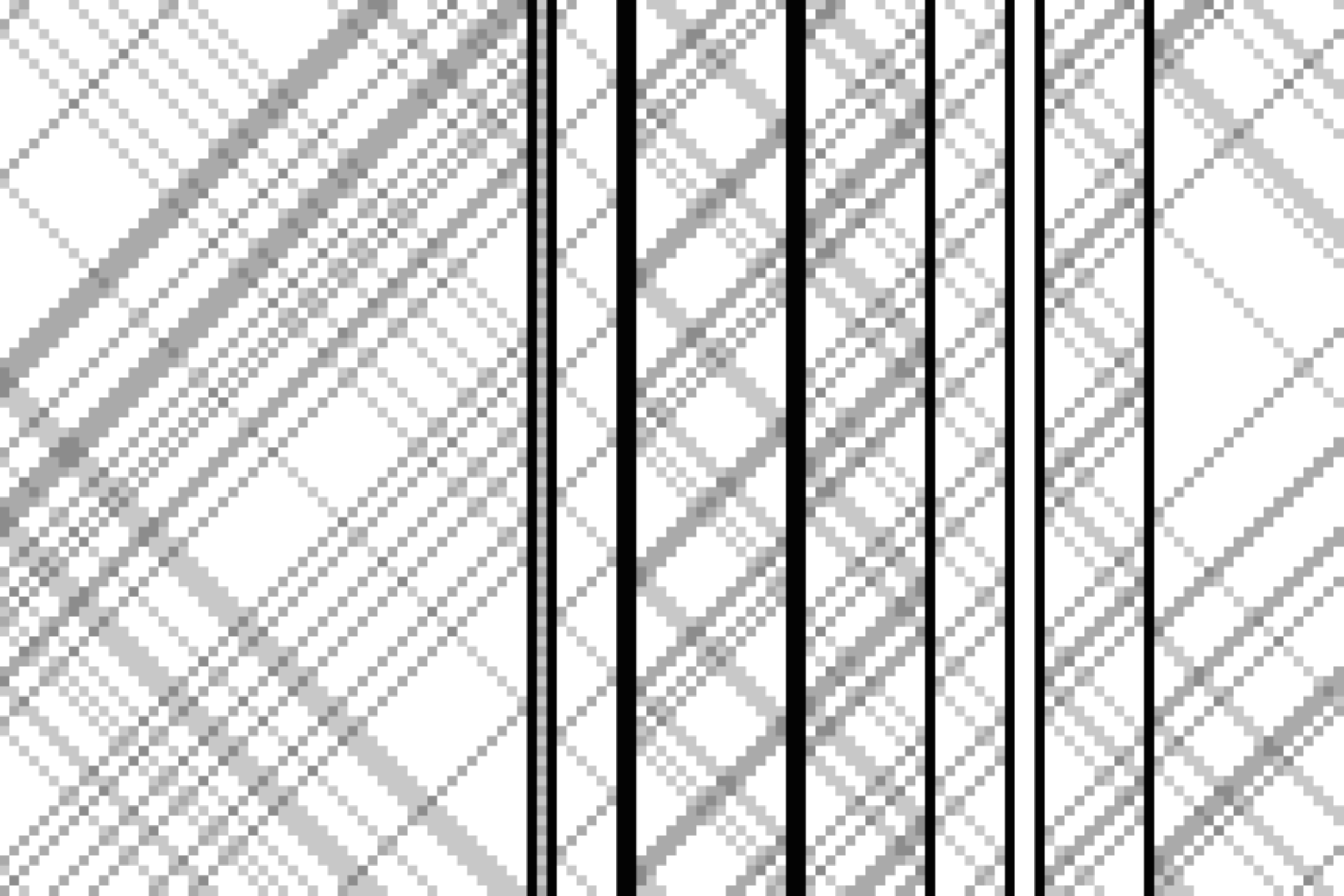}}
	&& \boxed{\includegraphics[width=0.435\textwidth]{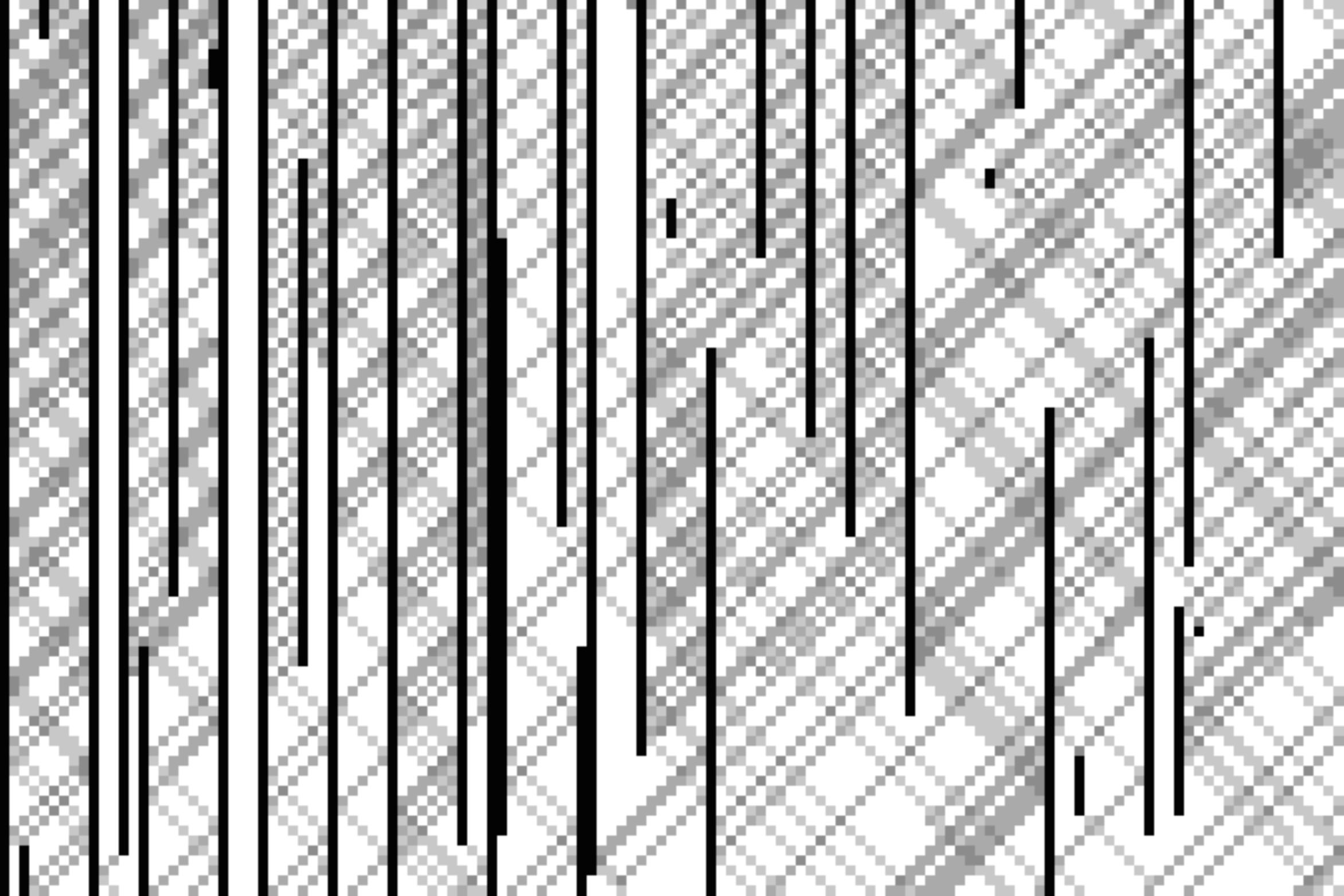}} \\ [1ex]
	&
	\multicolumn{3}{p{0.92\textwidth}}{This CA consists of non-interacting particles moving with constant speed in between walls.  The particles reflect upon hitting the walls.
	Without noise, the behaviour is very regular: the walls are static and the movement of each particle is periodic.  Noise mixes things up. Theorem~\ref{thm-glider_ref} shows the ergodicity. Since the CA is surjective, Theorem~\ref{thm:surj:ergodicity:shift-invariant} also shows the ergodicity ``modulo translations''.  The invariant measure is the uniform measure.}
	\\ [4em]
	%&&&\\
	
	\begin{sideways}Permutive CA (Sec.~\ref{sec:permutive}) \end{sideways} 
	&\boxed{\includegraphics[width=0.435\textwidth]{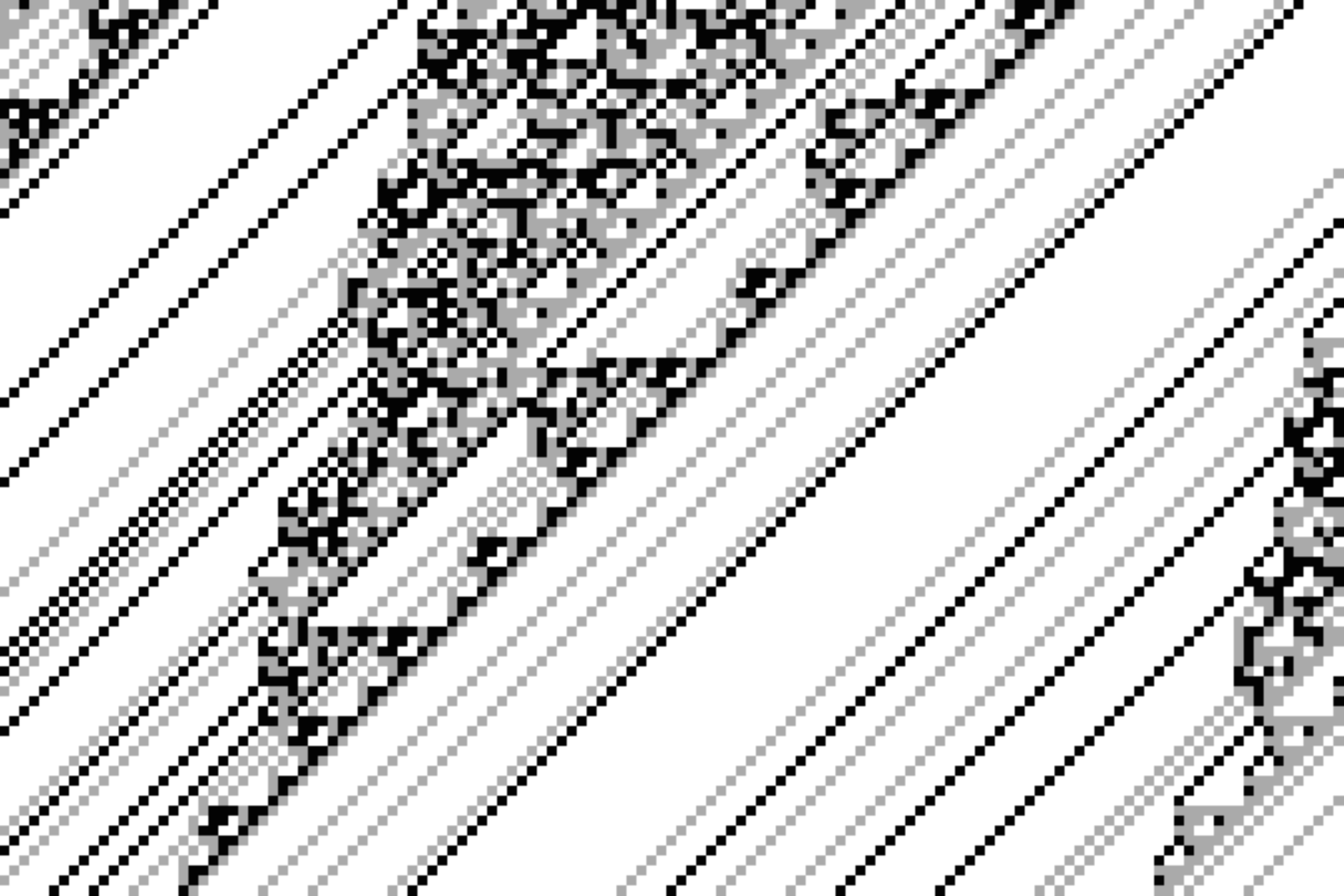}}
	&&\boxed{\includegraphics[width=0.435\textwidth]{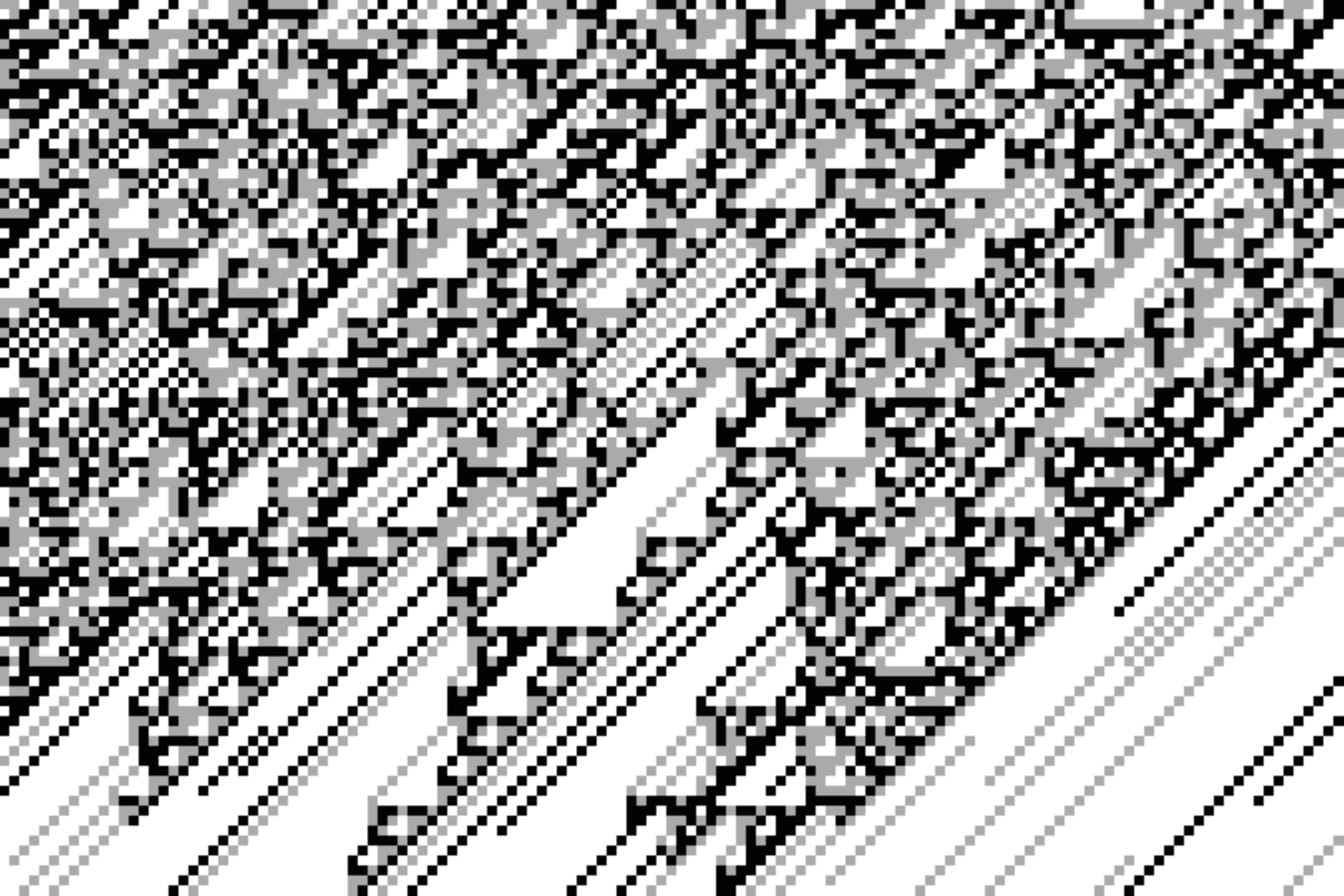}}\\ [1ex]
	&\multicolumn{3}{p{0.92\textwidth}}{The local rule is given by
		$F(x)_i\isdef x_{i-1}+x_{i}\cdot x_{i+1}\bmod 3$.
	The noisy version is ergodic by Theorems~\ref{thm-perm} or~\ref{thm:surj:ergodicity:shift-invariant}.}
	\\ [1em]
%	\hline 
%	&&&\\

	\begin{sideways}Additive CA \end{sideways} 
	&\boxed{\includegraphics[width=0.435\textwidth]{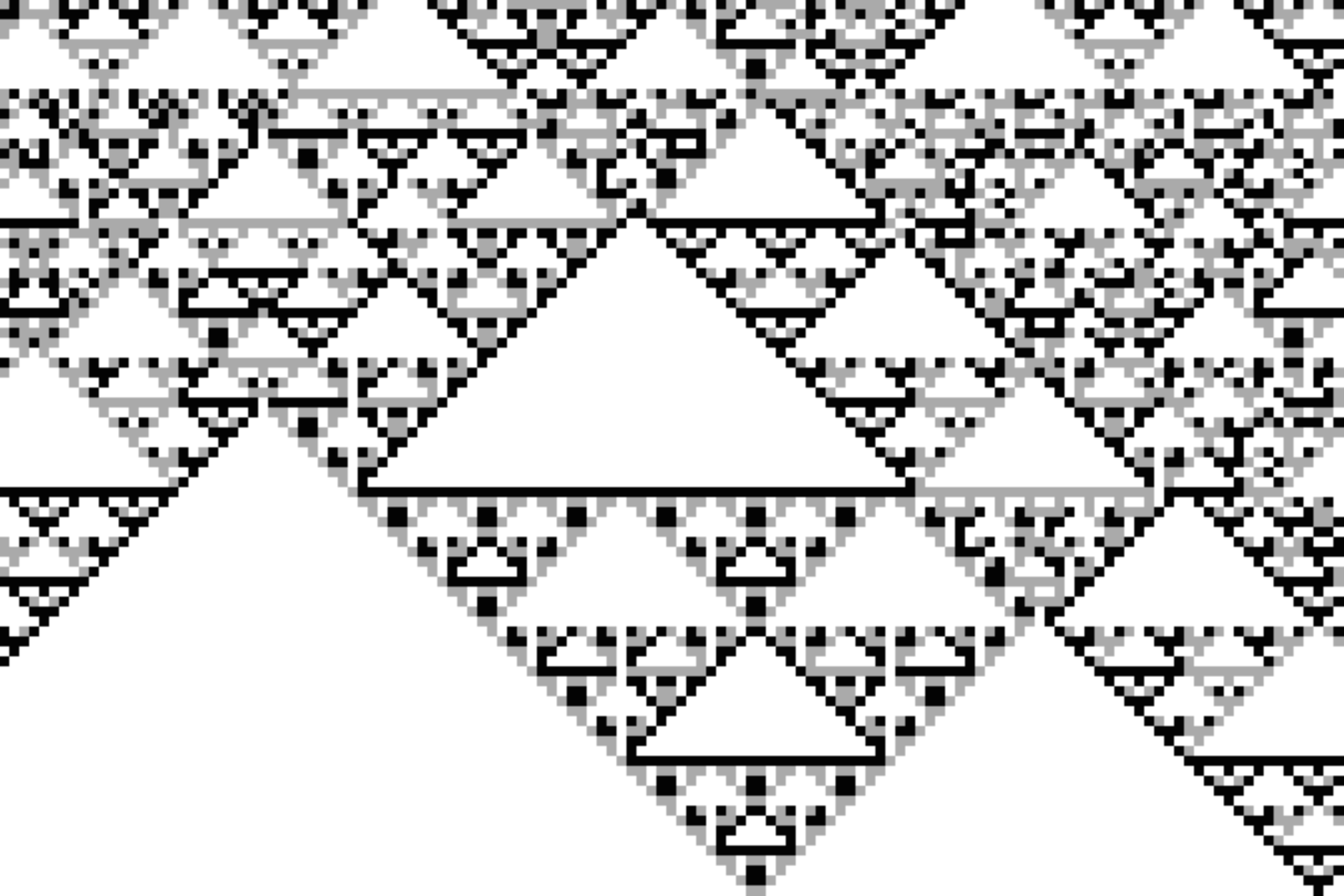}}
	&&\boxed{\includegraphics[width=0.435\textwidth]{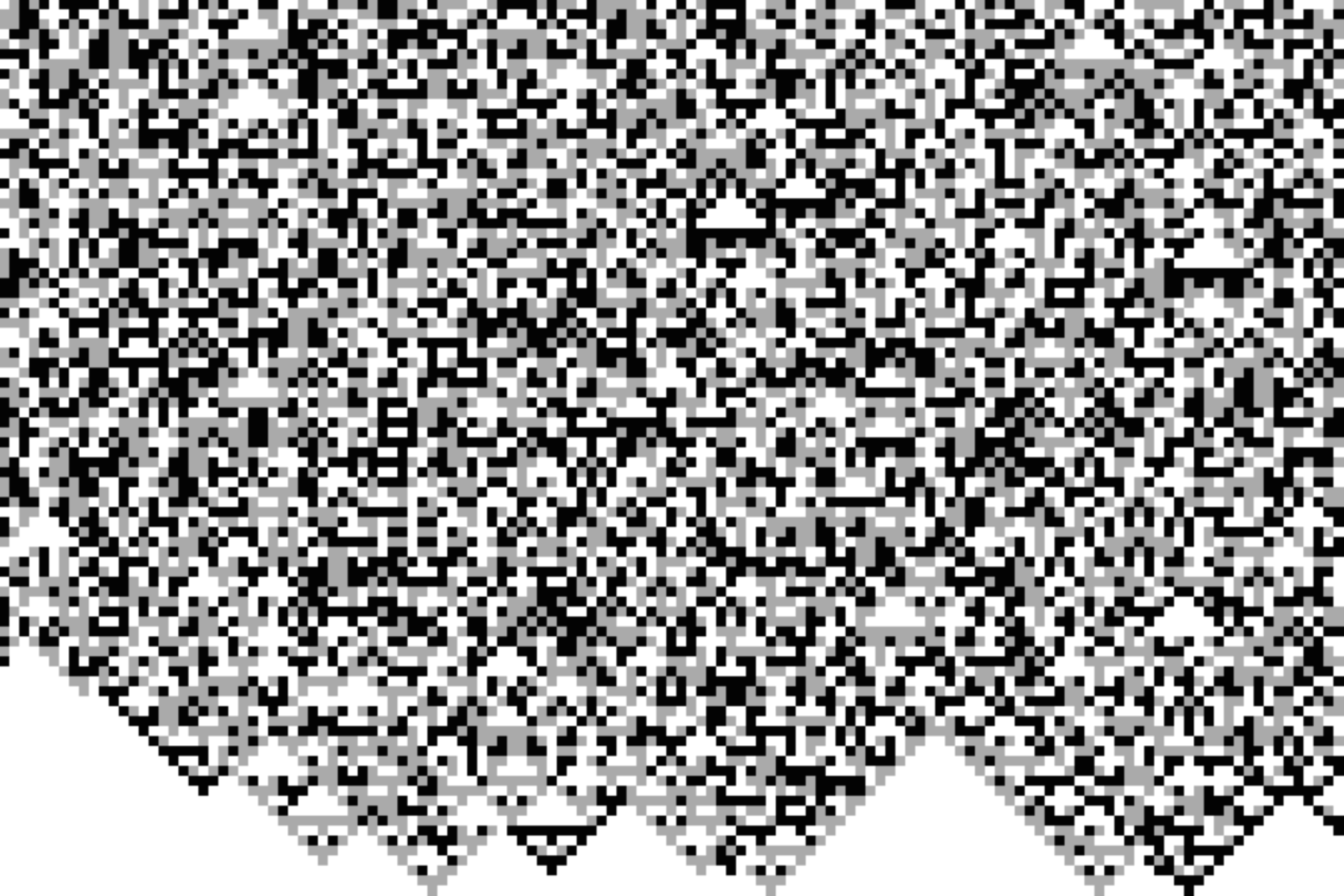}}\\ [1ex]
	&\multicolumn{3}{p{0.92\textwidth}}{The local rule is given by
		$F(x)_i\isdef x_{i-1}+x_{i}+x_{i+1}\mod 3$.
	This CA randomizes its initial condition even in absence of noise:
	starting from a sufficiently random configuration, its distribution converges to the uniform Bernoulli measure (see e.g.~\cite{Lin84}).
	The ergodicity of the noise version is given by Theorem~\ref{thm-perm} or~\ref{thm:surj:ergodicity:shift-invariant}.  See also Theorem~\ref{thm-xor}.}
	%\\
	%&&&\\

\end{tabular}
\end{center}
\end{scriptsize}

\caption{%
	Space-time diagrams of some surjective CA perturbed by a memoryless noise with uniform replacement distribution and error probability $\varepsilon$. Time goes upwards.}
	\label{fig:examples:2}
\end{figure}

\section{Coupling method}
\label{sec:coupling}

%Intuitively, a PCA is ergodic if it ``forgets'' its initial condition. In some cases, it is possible to prove the ergodicity in a constructive fashion, by means of a coupling, that is by running trajectories with different initial conditions simultaneously using a common source of randomness, and showing that the evolution of all such trajectories are asymptotically the same.
Intuitively, a PCA is ergodic if it ``forgets'' its initial condition. In some cases, it is possible to prove ergodicity in a constructive fashion by means of a coupling, that is by running the process simultaneously from different initial conditions using a common source of randomness, and showing that all trajectories eventually merge.
%We refer to the book of Lindvall~\cite{Lin02} for background on the coupling method in probability theory.

In this section, we use coupling arguments to prove the ergodicity of some classes of noisy CA.  
The arguments for most of the results in this section (Secs.~\ref{sec:envelope}--\ref{sec:spreading},~\ref{sec:gliders:walls}) are based on ``backward'' couplings (a.k.a.\ coupling from the past).  Only in Section~\ref{sec:gliders:annihilating} we use a ``forward'' coupling.  The coupling in the last result (Sec.~\ref{sec:permutive}) is rather different and merges the trajectories only on a finite window.

\subsection{Forward and backward couplings}

Recall that a \emph{coupling} of two probability measures $\mu$ and $\nu$ is simply a pair $(X,Y)$ of random variables defined on the same probability space such that $X$ is distributed according to $\mu$ and $Y$ is distributed according to $\nu$.
%The \emph{coupling inequality} gives an upper bound on the total variation distance of
Couplings can be used to obtain upper bounds on the total variation distance between two measures.
In the special case where $\mu,\nu\in\M(\X)$ are measures on the configuration space $\X$, the inequality
\begin{align}
\label{eq:inequality:coupling}
	\norm{\mu-\nu}_A &\leq \xPr(X_A\neq Y_A) \;,
\end{align}
holds for every coupling $(X,Y)$ of $\mu$ and $\nu$ and each finite set $A\subseteq\ZZ^d$.
This is known as the \emph{coupling inequality} (see e.g.~\cite{Lin02}).
%The equality in~\eqref{eq:inequality:coupling} can always be achieved by a suitable choice of the coupling, but we shall not use this fact.

%A \emph{coupling} of a PCA $\Phi$ with itself is a pair $\big((X^t)_{t\geq 0}, (Y^t)_{t\geq 0}\big)$ of evolutions of the PCA defined on the same probability space.
%The two evolutions $(X^t)_{t\geq 0}$ and $(Y^t)_{t\geq 0}$ are typically codependent; the important point is that each of them, when looked at in isolation, is distributed according to the evolution of the PCA~$\Phi$.
By a coupling of a PCA $\Phi$ we mean a coupling of two trajectories of $\Phi$, that is, a sequence $(X^t,Y^t)_{t\geq 0}$ where both $(X^t)_{t\geq 0}$ and $(Y^t)_{t\geq 0}$ are distributed according to the evolution of the PCA~$\Phi$.

The following lemma is a basic tool for proving the ergodicity of a PCA.

\begin{lemma}\label{lemma-coupling}
	Let $(X^t,Y^t)_{t\geq 0}$ be a coupling of a PCA $\Phi$.
	Let $\mu\in\M(\X)$ denote the distribution of~$X^0$ and suppose that~$Y^0$ is distributed according to a measure $\pi\in\M(\X)$ that is invariant under $\Phi$.
	Assume that for every $k\in\ZZ^d$, $\P(X^t_k\neq Y^t_k)\to 0$ as $t\to\infty$.
	Then, $(\mu\Phi^t)_{t\geq 0}$ converges weakly to $\pi$.
\end{lemma}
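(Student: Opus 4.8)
The plan is to reduce the weak convergence $\mu\Phi^t\to\pi$ to the convergence of the finite-window total variation distances $\norm{\mu\Phi^t-\pi}_A$, and then to bound these distances using the coupling inequality together with a union bound over the window $A$.

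First I would record that the two marginal processes carry the intended distributions. Since $(X^t)_{t\geq 0}$ is an evolution of $\Phi$ started from $\mu$, the random configuration $X^t$ is distributed according to $\mu\Phi^t$; similarly $Y^t$ is distributed according to $\pi\Phi^t$, which equals $\pi$ for every $t\geq 0$ because $\pi$ is invariant under $\Phi$. Recall from the preliminaries that a sequence of measures on $\X$ converges weakly if and only if it converges on every cylinder set, which (as $S^A$ is finite) is equivalent to requiring $\norm{\mu\Phi^t-\pi}_A\to 0$ for every finite $A\subseteq\ZZ^d$. It therefore suffices to establish this last convergence for an arbitrary fixed finite window $A$.

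Next, I would apply the coupling inequality~\eqref{eq:inequality:coupling} to the pair $(X^t,Y^t)$, which by construction is a coupling of $\mu\Phi^t$ and $\pi$, and then bound the resulting probability by a union bound over the finitely many sites of $A$:
\begin{align}
	\norm{\mu\Phi^t-\pi}_A &\leq \xPr\big(X^t_A\neq Y^t_A\big)
		= \xPr\bigg(\bigcup_{k\in A}\{X^t_k\neq Y^t_k\}\bigg)
		\leq \sum_{k\in A}\xPr\big(X^t_k\neq Y^t_k\big) \;.
\end{align}
By hypothesis, each summand tends to $0$ as $t\to\infty$, and the sum is finite, so the right-hand side vanishes in the limit. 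Hence $\norm{\mu\Phi^t-\pi}_A\to 0$ for every finite window $A$, which is precisely the desired weak convergence.

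There is no substantial obstacle in this argument: it is an essentially immediate consequence of the coupling inequality. The only point that warrants a little care is the passage from single-site control to control on a whole window, and this is exactly where finiteness of $A$ is used, ensuring that the union bound produces a finite sum of individually vanishing terms. Should one wish to weaken the single-site hypothesis, the same scheme would continue to work as long as one retains control of $\xPr(X^t_A\neq Y^t_A)$ for each finite $A$.
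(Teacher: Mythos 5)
Your proof is correct and follows exactly the paper's own argument: the coupling inequality on each finite window followed by a union bound over the sites of $A$, with $Y^t\sim\pi$ by invariance. The paper's proof is just a terser version of the same reasoning.
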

\begin{proof}
	For every finite set $A\subset \Z^d$, we have, by the coupling inequality
	\begin{align}
		\norm{\mu\Phi^t-\pi}_A &\leq \P(X^t_A\neq Y^t_A)\leq\sum_{i\in A}\P(X^t_i\neq Y^t_i) \;,
	\end{align}
	which goes to $0$ as $t\to\infty$, meaning that $(\mu\Phi^t)_{t\geq 0}$ converges weakly to $\pi$.
\end{proof}

Following the same idea, we have the following criterion for uniform ergodicity.

\begin{proposition}
\label{prop:coupling:forward}
	Let $\Phi$ be a PCA.
	Let $\rho(t)$ be a real function with $\rho(t)\to 0$ as $t\to\infty$.
	Suppose that for every two configurations $x,y\in\X$, there is a coupling $(X^t,Y^t)_{t\geq 0}$ of $\Phi$ with $X^0=x$ and $Y^0=y$ such that $\P(X^t_0\neq Y^t_0)\leq \rho(t)$ for all $t\geq 0$.
	Then, the PCA is uniformly ergodic and its unique invariant measure is spatially mixing.
\end{proposition}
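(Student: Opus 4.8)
The plan is to convert the origin-only coupling bound into a bound on the total-variation distance over an arbitrary finite window, and then read off uniqueness, uniform ergodicity, and spatial mixing in turn. The first step is also the only delicate one: I would promote the control at the origin to simultaneous control at every site. Since $\Phi$ commutes with all translations, applying the hypothesis to the shifted pair $(\sigma^k x,\sigma^k y)$ and transporting the resulting coupling back by $\sigma^{-k}$ yields, for each pair of configurations $x,y$, a \emph{single} coupling $(X^t,Y^t)_{t\geq 0}$ with $X^0=x$, $Y^0=y$ and $\P(X^t_k\neq Y^t_k)\leq\rho(t)$ for \emph{every} site $k\in\ZZ^d$ at once (this uses that the coupling may be taken translation covariant, so that one coupling realises the origin bound at all sites). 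The coupling inequality~\eqref{eq:inequality:coupling} together with a union bound over $A$ then gives, for every finite $A\subseteq\ZZ^d$,
\begin{align*}
\norm{\delta_x\Phi^t-\delta_y\Phi^t}_A\leq\P(X^t_A\neq Y^t_A)\leq\sum_{k\in A}\P(X^t_k\neq Y^t_k)\leq\abs{A}\,\rho(t).
\end{align*}
Integrating this point-mass estimate against two arbitrary measures $\mu,\nu\in\M(\X)$ (and taking the supremum over events in $\field{F}_A$) upgrades it to $\norm{\mu\Phi^t-\nu\Phi^t}_A\leq\abs{A}\,\rho(t)$.

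With this estimate in hand the remaining steps are routine. Existence of an invariant measure $\pi$ follows from the compactness of $\M(\X)$ recalled in Section~\ref{sec:prelim:ergodicity}. For uniqueness, if $\pi$ and $\pi'$ are both invariant then $\norm{\pi-\pi'}_A=\norm{\pi\Phi^t-\pi'\Phi^t}_A\leq\abs{A}\,\rho(t)\to 0$ for every finite $A$, whence $\pi=\pi'$. For uniform ergodicity, any initial measure $\mu$ satisfies $\norm{\mu\Phi^t-\pi}_A=\norm{\mu\Phi^t-\pi\Phi^t}_A\leq\abs{A}\,\rho(t)$, a bound independent of $\mu$; taking $\mu=\delta_x$ and letting $t\to\infty$ gives $\Phi^t(\,\cdot\,,[u])\to\pi([u])$ uniformly in $x$ for each cylinder $[u]$, which is precisely uniform ergodicity.

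Finally, the distance from stationarity obeys $d_A(t)=\sup_{x\in\X}\norm{\pi-\Phi^t(x,\cdot)}_A\leq\abs{A}\,\rho(t)$. Setting $\rho_n(t)\isdef n\,\rho(t)$ I have $d_A(t)\leq\rho_{\abs{A}}(t)$ with $\rho_n(t)\to 0$ as $t\to\infty$ for each fixed $n$, so Proposition~\ref{prop:spatial-mixing} applies and the unique invariant measure is spatially mixing.

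The crux of the argument is the very first move: extracting from the hypothesis a \emph{single} coupling that controls all sites simultaneously, so that the union bound is legitimate. The per-site distance $\norm{\delta_x\Phi^t-\delta_y\Phi^t}_{\{k\}}\leq\rho(t)$ comes for free from translation invariance, but it does not by itself control multi-site patterns; what makes the union bound work is that one and the same translation-covariant coupling realises the bound at every site. Once the $\abs{A}\,\rho(t)$ estimate is secured, compactness, the triangle inequality, and Proposition~\ref{prop:spatial-mixing} finish the proof mechanically.
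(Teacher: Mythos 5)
Your proof is correct and follows essentially the same route as the paper's: the coupling inequality plus a union bound over the sites of $A$ gives $\norm{\Phi^t(x,\cdot)-\Phi^t(y,\cdot)}_A\leq\abs{A}\,\rho(t)$, which is then integrated against $\mu$ and an invariant measure $\pi$ and fed into Proposition~\ref{prop:spatial-mixing}. The one point where you are more explicit than the paper --- arranging, via translation covariance, a \emph{single} coupling that realises the bound $\rho(t)$ at every site simultaneously, so that the union bound is legitimate (per-site total-variation bounds alone would not control multi-site patterns) --- is indeed the step the paper glosses over by deferring to the argument of Lemma~\ref{lemma-coupling}, and your handling of it is consistent with how the proposition is actually applied in the paper.
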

%\begin{proposition}
%\label{prop:coupling:forward}
%	Let $\Phi$ be a PCA and $\pi\in\M(\X)$ and invariant measure for $\Phi$.
%	Let $\rho(t)$ be a real function with $\rho(t)\to 0$ as $t\to\infty$.
%	Suppose that for every probability measure $\mu\in\M(\X)$, there is a coupling $(X^t,Y^t)_{t\geq 0}$ of $\Phi$ with $X^0\sim \mu$ and $Y^0\sim\pi$ such that $\P(X^t_0\neq Y^t_0)\leq \rho(t)$ for all $t\geq 0$.
%	Then, the PCA is uniformly ergodic and its unique invariant measure is spatially mixing.
%\end{proposition}
%\begin{proposition}
%	Let $\Phi$ be a PCA with an invariant measure $\pi$.
%	Suppose that for every probability measure $\mu\in\M(\X)$, there is a coupling $(X^t,Y^t)_{t\geq 0}$ of $\Phi$ with $X^0\sim \mu$ and $Y^0\sim\pi$ such that for each $k\in\ZZ^d$, $\P(X^t_k\neq Y^t_k)\to 0$ as $t\to\infty$.
%	Then, the PCA is ergodic.
%	Furthermore, if the couplings can be chosen in such a way that the convergence $\P(X^t_k\neq Y^t_k)\to 0$ is uniform in $\mu$, then the PCA is uniformly ergodic and its unique invariant measure is spatially mixing.
%\end{proposition}
\begin{proof}
	Let $\pi$ be an invariant measure for $\Phi$ and $\mu$ any other measure.
	Following the argument of Lemma~\ref{lemma-coupling}, for every two configurations $x,y\in\X$ and each finite set $A\subseteq\ZZ^d$ we get $\norm{\Phi^t(x,\cdot)-\Phi^t(y,\cdot)}_A\allowbreak\leq\abs{A}\rho(t)$.
	Integrating over $x$ with respect to $\mu$ and over $y$ with respect to $\pi$, we find that $\norm{\mu\Phi^t-\pi}_A\leq\abs{A}\rho(t)$.
	Therefore, the PCA is uniformly ergodic with unique invariant measure $\pi$.
	Furthermore, $d_A(t)\leq\abs{A}\rho(t)$ and the spatial mixing of $\pi$ follows from Proposition~\ref{prop:spatial-mixing}.
\end{proof}

One way to couple the evolutions of a given PCA from two different initial configurations is to update the configurations iteratively using a common source of randomness.
Let $\Phi$ be a PCA with local function $\varphi$. An \emph{update function} for $\varphi$ is a function $\u:S^m\times[0,1]\rightarrow S$ such that for all $(a_1,a_2,\ldots,a_m)\in S^m$ and $b\in S$, we have
\begin{align}
	\xPr\big(\u(a_1,a_2,\ldots,a_m;U)=b\big) &= \varphi(a_1,a_2,\ldots,a_m)(b)
\end{align}
whenever $U$ is a random variable uniformly distributed over the unit interval $[0,1]$. 
%$$\forall (x_1,\ldots,x_m)\in S^m, \lambda\Big (\Big\{\omega\in(0,1) : \u(x_1,\dots,x_m, \omega)=s\Big\}\Big ) = \varphi(x_1,\dots,x_m)(s),$$
%% if $\omega$ is a random variable uniformly distributed in $(0,1)$, then for all $x_1,\dots,x_m\in S$, $\u(x_1,\dots,x_m, \omega)$ is distributed according to $\varphi(x_1,\dots,x_m)$.
%where we denote by $\lambda$ the uniform measure on $(0,1)$.

The update function together with a collection of independent random samples uniformly drawn from $[0,1]$ can be used to simulate the PCA.  Let $\pspace{S}\isdef[0,1]^{\Z^d}$.
%denote the set of configurations of real numbers in the interval $[0,1]$ equipped with the product $\sigma$-algebra.
Given an update function $\u$, we define the \emph{global update map} $\Psi:\X\times\pspace{S}\to\X$ by
\begin{align}
	\Psi(x,u)_k &\isdef \u(x_{k+n_1},\ldots,x_{k+n_m};u_k) \;.
\end{align}
For $t\geq 1$, we recursively define $\Psi^t:\X\times\pspace{S}^t\to\X$ by $\Psi^1(x;u) \isdef \Psi(x;u)$ and 
\begin{align}
	\Psi^{t+1}(x;u^1,u^2,\ldots,u^{t+1}) &\isdef
		\Psi\big(\Psi^t(x;u^1,u^2,\ldots,u^t), u^{t+1}\big) \\
	&=
		\Psi^t\big(\Psi(x;u^1);u^2,\ldots,u^{t+1}\big) \;.
\end{align}
By construction, when $U\isdef (U_i)_{i\in\ZZ^d}$ is a collection of independent random variables uniformly distributed over $[0,1]$, the configuration $\Psi(x;U)$ is distributed according to measure $\Phi(x,\cdot)$.  More generally, if $U^1,U^2,\ldots,U^t$ are independent random configurations uniformly chosen from $\pspace{S}$, that is, if $(U^n_i)_{i\in\ZZ^d,1\leq n\leq t}$ is a collection of independent random variables uniformly distributed over $[0,1]$, then the sequence
\begin{align}
	x, \Psi^1(x;U^1), \Psi^2(x;U^1,U^2), \ldots, \Psi^t(x;U^1,U^2,\ldots,U^t)
\end{align}
is distributed according to the evolution of $\Phi$ from time $0$ to time $t$ with initial configuration~$x$.

It is sometimes useful to simulate the PCA \emph{from the past}.
Let $(U^n_i)_{i\in\ZZ^d,n\in\NN^-}$ be a collection of independent uniformly distributed random variables chosen from $[0,1]$, where $\NN^-\isdef\{0,-1,-2,\ldots\}$, and write $U^n$ for the collection $(U^n_i)_{i\in\ZZ^d}$.  The value $\Psi^t(x;U^{-t},U^{-t+1},\ldots,U^0)$ can be interpreted as the configuration at time $0$ obtained when simulating the PCA $\Phi$ from configuration $x$ at time $-t$ and using the random samples $(U^n_i)_{i\in\ZZ^d,n\in\NN^-}$.
Let us define
\begin{align}
	p_t(\Phi) &\isdef 
		\xPr\big(\text{the map $x\mapsto\Psi^t(x;U^{-t},U^{-t+1},\ldots,U^0)_0$ is constant}\big) \;.
\end{align}
In words, $p_t(\Phi)$ is the probability that, when we simulate $\Phi$ with configuration $x$ at time $-t$ and using the random samples $(U^n_i)_{i\in\ZZ^d,n\in\NN^-}$, the symbol at the origin at time $0$ is independent of $x$.

The following proposition provides another criterion for uniform ergodicity in terms of $p_t(\Phi)$.  Under the same criterion, one can algorithmically generate a perfect sample from the unique invariant measure of~$\Phi$.  This is an adaptation to PCA of the \emph{coupling-from-the-past} algorithm of Propp and Wilson~\cite{ProWil96}, which is developed in~\cite{BusMaiMar13}.  In the present setting, a \emph{perfect sampling algorithm} for a probability measure $\mu\in\M(\X)$ is an algorithm that, given a finite set $A\subseteq\ZZ^d$ and using an unbounded source of independent random samples uniformly drawn from $[0,1]$, outputs a random pattern $W_A$ such that $\xPr(W_A=w_A)=\mu([w_A])$.\footnote{%
	In general, the transition probabilities of the PCA are arbitrary real numbers and do not have finite presentations.  The sampling algorithm of Proposition~\ref{prop:coupling} also requires access to an (infinite) symbolic presentation of these real numbers.
}

\begin{proposition}\label{prop:coupling}
%Let $\Phi$ be a PCA satisfying $p_t\rightarrow_{t\rightarrow\infty} 1$. Then, $\Phi$ is uniformly ergodic, and its unique invariant measure is spatially mixing. Furthermore, one can sample perfectly its unique invariant measure using the coupling from the past method.
	Let $\Phi$ be a PCA satisfying $p_t(\Phi)\to 1$ as $t\to\infty$. Then, $\Phi$ is uniformly ergodic.  Furthermore, the unique invariant measure of $\Phi$ is spatially mixing and has a perfect sampling algorithm.
\end{proposition}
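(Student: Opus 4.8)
The plan is to obtain the two ergodicity assertions directly from the forward coupling criterion of Proposition~\ref{prop:coupling:forward}, and then to extract the perfect sampling algorithm from the backward (coupling-from-the-past) construction that underlies the definition of $p_t(\Phi)$. The first thing I would record is the elementary observation that $p_t(\Phi)$ is equally the probability that the \emph{forward} map $x\mapsto\Psi^t(x;U^1,\ldots,U^t)_0$ is constant, since this is the same expression with the i.i.d.\ uniform samples merely relabelled. Given two configurations $x,y\in\X$, coupling their trajectories through common samples gives $X^t\isdef\Psi^t(x;U^1,\ldots,U^t)$ and $Y^t\isdef\Psi^t(y;U^1,\ldots,U^t)$, each marginally an evolution of $\Phi$; whenever the map $z\mapsto\Psi^t(z;U^1,\ldots,U^t)_0$ is constant we have $X^t_0=Y^t_0$, so
\[
	\xPr(X^t_0\neq Y^t_0)\leq 1-p_t(\Phi).
\]
Setting $\rho(t)\isdef 1-p_t(\Phi)\to 0$, Proposition~\ref{prop:coupling:forward} immediately yields that $\Phi$ is uniformly ergodic and that its unique invariant measure $\pi$ is spatially mixing, disposing of the first two claims.

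For the sampling algorithm I would pass to the backward simulation driven by $(U^n)_{n\in\NN^-}$. Fix a finite window $A$ and let $C^A_t$ denote the event that the map $x\mapsto\Psi^t(x;U^{-t},\ldots,U^0)_A$ is constant. The key structural fact is that coalescence is \emph{monotone in $t$}: peeling off the oldest update through the recursion $\Psi^{t+1}(x;U^{-t-1},\ldots,U^0)=\Psi^t\big(\Psi(x;U^{-t-1});U^{-t},\ldots,U^0\big)$ shows that constancy of the time-$t$ map forces constancy of the time-$(t+1)$ map, so $C^A_t\subseteq C^A_{t+1}$. By shift-invariance of the law of $(U^n)$, the single-site map at each $k$ is constant with probability exactly $p_t(\Phi)$, so a union bound gives $\xPr(C^A_t)\geq 1-\abs{A}\,(1-p_t(\Phi))\to 1$. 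Hence almost surely there is a finite random coalescence time $T_A$; the algorithm draws fresh samples $U^0,U^{-1},\ldots$ until it detects the first $t$ with $C^A_t$, and then outputs the common value $W_A\isdef\Psi^{t}(x;U^{-t},\ldots,U^0)_A$ (independent of $x$).

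It remains to verify that $W_A$ has the correct law. Monotone coalescence lets me set $W\isdef\lim_{t\to\infty}\Psi^t(x;U^{-t},\ldots,U^0)$, a random configuration whose restriction to each finite window is eventually constant and independent of $x$, and whose restriction to $A$ is exactly the algorithm's output. For each fixed $t$ the configuration $\Psi^t(x;U^{-t},\ldots,U^0)$ has the same law as a length-$t$ forward run of $\Phi$ from $x$, namely $\delta_x\Phi^t$ (the backward labelling does not change the law, as the samples are i.i.d.); by the uniform ergodicity established above, $\delta_x\Phi^t\to\pi$ weakly. Since almost-sure convergence implies convergence in distribution, $\mathrm{law}(W)=\pi$, whence $\xPr(W_A=w_A)=\pi([w_A])$, as required for a perfect sampler.

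The main obstacle is the customary subtlety of coupling from the past: one must argue the monotone coalescence property carefully and resist reading off the sample from a \emph{forward} run (which converges in distribution to $\pi$ but is never exactly stationary at finite time), instead outputting the value that the \emph{backward} construction has stabilised. Everything else—the forward coupling bound, the union bound over $A$, and the identification $\mathrm{law}(W)=\pi$—is routine once the equivalence of the forward and backward expressions for $p_t(\Phi)$ and the semigroup recursion for $\Psi^t$ are in hand.
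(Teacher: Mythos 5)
Your proof is correct and follows essentially the same coupling-with-common-randomness strategy as the paper: the bound $\abs{A}\big(1-p_t(\Phi)\big)$ on the distance from stationarity and the coupling-from-the-past sampler are the same. The only cosmetic difference is that you obtain uniform ergodicity and spatial mixing by relabelling the samples into a forward coupling and invoking Proposition~\ref{prop:coupling:forward}, whereas the paper runs the backward coupling against a trajectory started from an invariant measure and applies the coupling inequality and Proposition~\ref{prop:spatial-mixing} directly; your verification that the CFTP output has law $\pi$ (monotone coalescence plus weak convergence of $\delta_x\Phi^t$) is in fact more explicit than the paper's.
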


\begin{proof}
	Let us imagine simulating the PCA $\Phi$ from time $-t$ in the past up to time $0$, starting from two configurations $X^{-t}$ and $Y^{-t}$.  We can couple the configurations obtained at time $0$ by using a family $U=(U^n_i)_{i\in\ZZ^d,n\in\NN^-}$ of independent uniform random samples from $[0,1]$, and setting $X^0\isdef\Psi^t(X^{-t}; U^{-t},U^{-t+1},\ldots,U^0)$ and $Y^0\isdef\Psi^t(Y^{-t}; U^{-t},U^{-t+1},\ldots,U^0)$.

%Let us imagine that we want to make evolve the PCA $\Phi$ from time $-t$ in the past, up to time $0$, starting from some configurations $X^{-t}$ and $Y^{-t}$. 
%Let  $U$ be a familiy of independent uniform random variables. Using $U$, we can couple the configurations obtained at time $0$ from $X^{-t}$ and $Y^{-t}$, by setting $X^{0}=\Psi^t(U, X^{-t})$ and $Y^{0}=\Psi^t(U, Y^{-t}).$

Take $X^{-t}$ to be a fixed configuration $x$ and choose $Y^{-t}$ at random, independently from $U$, according to an invariant measure $\pi$ of the PCA.
By the coupling inequality, for every finite set $A\subset \Z^d$, we have
\begin{align}
\label{eq:prop:coupling:proof}
	\norm{\pi-\Phi^t(x,\cdot)}_A &\leq \P(X^0_A\neq Y^0_A)
		\leq \sum_{i\in A}\P(X^0_i\neq Y^0_i)\leq \abs{A} \big(1-p_t(\Phi)\big) \;.
\end{align}
Since $x$ is arbitrary and $p_t(\Phi)\to 1$ as $t\to\infty$, it follows that $\Phi$ is uniformly ergodic with unique invariant measure~$\pi$.
Furthermore, from~\eqref{eq:prop:coupling:proof} we get $d_A(t)\leq\abs{A}\big(1-p_t(\Phi)\big)$.  Therefore, the conditions of Proposition~\ref{prop:spatial-mixing} are satisfied and $\pi$ is spatially mixing.

%Since this is true for every choice of $x\in\X$, we conclude that $\Phi$ is uniformly ergodic.
%This is true for every choice of $x\in\X$, hence $d_A(t)\leq\abs{A}\big(1-p_t(\Phi)\big)$.
%Since $p_t(\Phi)\to 1$ as $t\to\infty$, it follows that $\Phi$ is uniformly ergodic. Furthermore, the conditions of Prop.~\ref{prop:spatial-mixing} are satisfied, hence the unique invariant measure of $\Phi$ is spatially mixing.

%Let us now assume that $X^{-t}$ is equal to a given configuration $x$, and that $Y^{-t}$ is distributed according to an invariant measure $\pi\in\M(\X)$. 
%By the coupling inequality, for any finite set $A\subset \Z^d$, we have:
%$$\norm{\pi-\Phi^t(x,\cdot)}_A \leq \P(X^0_A\neq Y^0_A)\leq\sum_{i\in A}\P(X^t_i\neq Y^t_i)\leq |A| (1-p_t).$$ 
%This is true for any choice of $x\in\X$, so that $d_A(t)\leq |A| (1-p_t).$ Since $p_t\rightarrow_{t\rightarrow\infty} 1$, it follows that $\Phi$ is uniformly ergodic. Furthermore, the conditions of Prop.~\ref{prop:spatial-mixing} are satisfied, meaning that the unique invariant measure of $\Phi$ is spatially mixing.

Let us now present a perfect sampling algorithm for the unique invariant measure $\pi$ of $\Phi$. We assume that we have access to a family $U=(U^n_i)_{i\in\ZZ^d,n\in\NN^-}$ of independent uniform random samples from $[0,1]$. Let $A$ be a finite subset of $\Z^d$. 
Since $p_t(\Phi)\to 1$ as $t\to\infty$, we know that almost surely, there exists an integer $T\geq 1$ depending on $U$, such that the map $x\mapsto \Psi^T(x; U^{-T},U^{-T+1},\ldots,U^0)_A$ is constant. This constant is distributed exactly according to $\pi$. More specifically, for a finite pattern $w\in S^A,$ the probability that $\Psi^T(x; U^{-T},U^{-T+1},\ldots,U^0)_A=w$ is exactly $\pi([w])$. Furthermore, since $\Psi^t(x,U^{-t},U^{-t+1},\ldots,U^0)_A$ depends only on $x_{A+\Neighb^t}$ and on $(U^n_i)_{i\in A+\Neighb^{-n},-t<n\leq 0}$, we can indeed check for each $t=1,2,\ldots$ whether the function $x\mapsto\Psi^t(x,U^{-t},U^{-t+1},\ldots,U^0)_A$ is constant or not.
%This is a declinaison to PCA of the \emph{coupling from the past algorithm} of Propp and Wilson~\cite{ProWil96}, see also~\cite{BusMaiMar13} for details.
\end{proof}

\subsection{The high-noise regime}
\label{sec:envelope}

In this section, we prove an ergodicity criterion holding in the high-noise regime. In particular, it gives a simple condition ensuring the ergodicity of deterministic CA when perturbed by a high enough zero-range noise (see Prop.~\ref{prop:coupling_hzr} and its two corollaries).

Let $\Phi$ be a PCA with alphabet $S$, neibhourhood $\Neighb=\{n_1,\ldots,n_m\}$ and local rule $\varphi$. In order to prove the ergodicity of $\Phi$ using Proposition~\ref{prop:coupling}, we need to design an update function $\u:S^m\times[0,1]\rightarrow S$ for which the dependence of $\u(a_1,\dots,a_m; u)$ on $(a_1,\dots,a_m)\in S^m$ is weak.  A natural idea is to choose an update function with the property that for every $b\in S$, we have
%\begin{align}
%\forall s\in S, \quad \lambda\Big( \Big\{\omega \in (0,1) : \forall (x_1,\dots,x_m)\in S^m, \u(x_1,\dots,x_m, \omega)=s\Big\} \Big)\\
%\geq \min\{\phi(x_1,\dots,x_m)(s) : (x_1,\ldots,x_m)\in S^m\}.
%\end{align}
\begin{align}
	%\MoveEqLeft
	\xPr\big(\text{$\u(a_1,\ldots,a_m;U)=b$ for all $(a_1,\ldots,a_m)\in S^m$}\big) %\nonumber\\
		%&\geq \min\big\{\varphi(a_1,\ldots,a_m)(b): (a_1,\ldots,a_m)\in S^m\big\}
		&\geq \min_{a_1,\ldots,a_m\in S} \varphi(a_1,\ldots,a_m)(b)
\end{align}
whenever $U$ is a uniform sample from $[0,1]$.
In that case, with probability at least
\begin{align}
	%\sum_{b\in S}\min\big\{\varphi(a_1,\dots,a_m)(b) : (a_1,\ldots,a_m)\in S^m\big\} \;,
	\sum_{b\in S}\min_{a_1,\ldots,a_m\in S} \varphi(a_1,\dots,a_m)(b) \;,
\end{align}
the knowledge of $(a_1,\dots,a_m)\in S^m$ will not be used for computing the value $\u(a_1,\dots,a_m, U)$. 
The notion of \emph{envelope PCA} pursues this idea and provides a simple ergodicity criterion in the high-noise regime.

Instead of running the PCA from different initial configurations, we define a new PCA on an extended alphabet, containing a symbol $\qm$ representing sites whose values are not known (i.e., which may differ between the different copies) and we run it from a single initial configuration containing only the symbol $\qm$. Each time we are able to make the different copies match on a site, the symbol $\qm$ is replaced by a symbol $b\in S$ on which the different copies agree. 
An evolution of the envelope PCA thus encodes a coupling of different copies of the original PCA, with a symbol $\qm$ denoting sites where the copies disagree. If the density of symbol $\qm$ converges to $0$ when time goes to infinity, it means that the original PCA is forgetting its initial condition, hence it is ergodic.

Let us now go into more details. We introduce a new alphabet $\t{S}=S\cup\{\qm\}$, containing an additional question mark symbol, and we define a partial order on $\t{S}$ by declaring $a\prec\qm$ for every $a\in S$. We say that $a\in S$ is \emph{compatible} with $b\in\t{S}$ if $a\preceq b$. The \emph{envelope} of the PCA $\Phi$ is another PCA $\t{\Phi}$ with alphabet $\t{S}$, neighbourhood $\Neighb$ and local rule $\t{\varphi}:\t{S}^m\times\t{S}\rightarrow [0,1]$ defined by
\begin{align}
	\t{\varphi}(a_1,\ldots, a_m)(b) &\isdef
		\min\big\{\varphi(a'_1,\ldots, a'_m)(b) \, ; \; a'_1\ave a_1,\ldots, a'_m\ave a_m\big\}
\end{align}
for $a_1,\ldots,a_m\in\t{S}$ and $b\in S$,
where the minimum is taken over all $a'_1,\ldots,a'_m$ in $S$. The probability of transition to symbol~$\qm$ is then given by
\begin{align}
	\t{\varphi}(a_1,\ldots, a_m)(\qm) &\isdef 1-\sum_{b\in S}\t{\varphi}(a_1,\ldots, a_m)(b) \;.
\end{align}
From a configuration $x\in \t{S}^{\Z^d}$, the symbol at site $k$ is thus updated to a symbol $b\in S$ with a probability that is the minimum of transition probabilities according to $\Phi$ to symbol $b$, among all possible neighbourhood patterns for site $k$ that are compatible with $x$. With the remaining probability, the symbol at site $k$ is updated to $\qm$.

%The envelope PCA can be viewed as a systematic treatment of ideas already developed by Toom in the context of percolation PCA (see for instance~\cite[Section 2]{Too01}). It is a good tool to prove the ergodicity in the high-noise regime, and in some specific models, it is also possible to use it in a clever way to prove the ergodicity even for small noises~\cite{HolMarMar15}.
%
%As a corollary of Prop.~\ref{prop:coupling}, we have the following lemma.

The envelope PCA was introduced in~\cite{BusMaiMar13} as a tool to prove the ergodicity of a PCA and to generate perfect samples from its unique invariant measure.
%The envelope PCA is particularly convenient for proving ergodicity in the high-noise regime, but it
While it is particularly convenient for the high-noise regime, the envelope PCA
has also been successfully exploited to prove the ergodicity of some models in the low-noise regime~\cite{HolMarMar15}.
Similar ideas have been pursued by others~\cite{Fer91}.
The idea of the envelope PCA is reminiscent of the \emph{minorant} PCA introduced by Toom et al.~\cite[Chap.~3]{TooVasStaMitKurPir90}, %whose idea goes back to Wasserstein~\cite{Vas69},
which can be used in a more or less similar fashion to prove ergodicity in the high-noise regime.
%\siamak{Please check this paragraph to see if you agree.}

The following corollary of Proposition~\ref{prop:coupling}, gives a sufficient condition for ergodicity in terms of the envelope PCA.  

\begin{lemma}
	Suppose that the density %$(\delta_{\qm^{\Z^d}}\t{\Phi}^t)([\qm])$
	$\t{\Phi}^t\big(\qm^{\Z^d},[\qm]\big)$
	of symbols $\qm$ at time $t$ starting from the initial configuration $\qm^{\Z^d}$ converges to $0$ as $t\rightarrow\infty$.  Then, the PCA $\Phi$ is uniformly ergodic, and its unique invariant measure is spatially mixing and admits a perfect sampling algorithm.
\end{lemma}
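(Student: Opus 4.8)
The plan is to deduce the statement directly from Proposition~\ref{prop:coupling}. For this I would exhibit an update function $\u$ for $\Phi$ whose coupling-from-the-past is ``dominated'' by the envelope PCA run from $\qm^{\Z^d}$, so that the corresponding success probability $p_t(\Phi)$ satisfies
\begin{align}
	p_t(\Phi) &\geq 1-\t\Phi^t\big(\qm^{\Z^d},[\qm]\big) \;.
\end{align}
The right-hand side tends to $1$ by hypothesis, so $p_t(\Phi)\to 1$, and Proposition~\ref{prop:coupling} then yields all three conclusions at once.

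The heart of the argument is to simulate $\Phi$ and its envelope $\t\Phi$ on a \emph{common} probability space in a monotone-compatible way. I would construct an update function $\u:S^m\times[0,1]\to S$ for $\varphi$ together with an update function $\t{\u}:\t S^m\times[0,1]\to\t S$ for $\t\varphi$ such that, for every fixed $u\in[0,1]$, the map $\t a\mapsto\t{\u}(\t a;u)$ is order-preserving from $(\t S^m,\preceq)$ to $(\t S,\preceq)$ and extends $\u$, i.e.
\begin{align}
	a\preceq\t a \quad\Longrightarrow\quad \u(a;u)\preceq\t{\u}(\t a;u)
\end{align}
for all $a\in S^m$ and $\t a\in\t S^m$. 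Because $\qm$ is the top element of $\t S$ and the symbols of $S$ form an antichain, such a map is exactly the same datum as a partition of $\t S^m$ into pairwise disjoint down-sets $D_b$ ($b\in S$), on which the output is $b$, together with the complementary up-set on which the output is $\qm$; the marginal requirement reads $\lambda\{u:\t a\in D_b(u)\}=\t\varphi(\t a)(b)$, where $\lambda$ is Lebesgue measure. The point is that the envelope rule $\t\varphi(\t a)(b)=\min_{a\preceq\t a}\varphi(a)(b)$ is antitone in $\t a$ for each fixed $b$ — which is precisely the compatibility condition that makes such a monotone coupling realizable, and which underlies the envelope construction of~\cite{BusMaiMar13}. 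I expect the verification of the existence of this simultaneous monotone partition, with the correct marginals for \emph{every} pattern of $\t S^m$ at once, to be the main obstacle: the naive ``stacking'' of the sub-probability vectors $\t\varphi(\t a)(\cdot)$ does not respect the order, so the $\min$-structure has to be used essentially.

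Granting the monotone-compatible pair $(\u,\t{\u})$, I would run both processes from time $-t$ with the same noise field $U=(U^n_i)_{i\in\Z^d,n\in\NN^-}$: the envelope from the all-$\qm$ configuration and $\Phi$ from an arbitrary configuration $x$. Writing $\Psi$ and $\t\Psi$ for the two global update maps, an induction on the number of steps, applying the displayed monotonicity at each site, shows that the $\Phi$-run stays $\preceq$ the envelope run at every site and time; the base case holds because $x_k\in S\preceq\qm$. In particular,
\begin{align}
	\Psi^t(x;U^{-t},\ldots,U^0)_0 &\preceq \t\Psi^t(\qm^{\Z^d};U^{-t},\ldots,U^0)_0 \;.
\end{align}
Hence, whenever the envelope origin carries a genuine symbol $b\in S$ at time $0$, every $x$ gives a value $\preceq b$ lying in $S$; since $b$ is minimal in $\t S$, this forces the value to equal $b$, so the map $x\mapsto\Psi^t(x;U^{-t},\ldots,U^0)_0$ is constant. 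Therefore the event that this map is constant contains the event that the envelope origin is not $\qm$, which by shift-invariance of $\t\Phi$ and the homogeneity of $\qm^{\Z^d}$ has probability $1-\t\Phi^t(\qm^{\Z^d},[\qm])$. This gives the target inequality for $p_t(\Phi)$ computed with our update function $\u$.

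Finally, since $\t\Phi^t(\qm^{\Z^d},[\qm])\to 0$ by assumption, we obtain $p_t(\Phi)\to 1$, and Proposition~\ref{prop:coupling} immediately delivers that $\Phi$ is uniformly ergodic, that its unique invariant measure is spatially mixing, and that it admits a perfect sampling algorithm.
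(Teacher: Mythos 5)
Your strategy is the one the paper intends: the lemma is stated without a formal proof, as a corollary of Proposition~\ref{prop:coupling} together with the informal description of the envelope coupling and the reference to~\cite{BusMaiMar13}, and your induction establishing $\Psi^t(x;\cdot)_0\preceq\t\Psi^t(\qm^{\Z^d};\cdot)_0$ is correct \emph{granted} the monotone-compatible pair $(\u,\t{\u})$. However, the step you flag as the main obstacle is not merely delicate: it is impossible in general once $\abs{S}\geq 3$. Your requirements amount to finding sets $D_b(a)\isdef\{u:\u(a;u)=b\}$ with $\lambda(D_b(a))=\varphi(a)(b)$, partitioning $[0,1]$ for each $a\in S^m$, and satisfying $\lambda\big(\bigcap_{a\preceq\t a}D_b(a)\big)=\min_{a\preceq\t a}\varphi(a)(b)$ for every $\t a\in\t S^m$ and $b\in S$. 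Whenever the minimum is attained at every $a$ in the box $\{a: a\preceq\t a\}$, this forces the corresponding sets $D_b(a)$ to coincide up to null sets, and these forced identifications can clash with the partition constraints. Concretely, take $S=\{x,y,z\}$, $m=2$, with
\begin{align}
	\varphi(x,x)=\varphi(y,x)=(\tfrac12,\tfrac12,0),\quad
	\varphi(x,y)=(\tfrac12,0,\tfrac12),\quad
	\varphi(y,y)=(0,\tfrac12,\tfrac12)
\end{align}
(probabilities listed for outputs $x,y,z$), extended so that $\varphi(x,\cdot)(x)$, $\varphi(\cdot,y)(z)$, $\varphi(y,\cdot)(y)$ and $\varphi(\cdot,x)(y)$ all equal $\nicefrac12$ identically on the corresponding row or column. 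The pattern $(x,\qm)$ forces $D_x(x,x)=D_x(x,y)=:P$ with $\lambda(P)=\nicefrac12$; the partition at $(x,y)$ then gives $D_z(x,y)=P^\complement$; the pattern $(\qm,y)$ forces $D_z(y,y)=P^\complement$, so the partition at $(y,y)$ gives $D_y(y,y)=P$; the pattern $(y,\qm)$ forces $D_y(y,x)=P$; but the partition at $(x,x)$ gives $D_y(x,x)=P^\complement$, and the pattern $(\qm,x)$ forces $D_y(y,x)=D_y(x,x)$, i.e.\ $P=P^\complement$ --- a contradiction. (For binary $S$ your construction does work: the stacking $D_{\qX}(a)\isdef[0,\varphi(a)(\qX))$ makes every relevant family of sets nested, which is why the basic envelope of~\cite{BusMaiMar13} is introduced for two-letter alphabets.)

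The lemma is nevertheless true, and the repair is to give up on driving all copies and the envelope by a single site-local uniform. Generate the envelope trajectory $(\t X^s)$ from $\qm^{\Z^d}$ first; then, conditionally on it, update each copy at a site where the envelope produced $b\in S$ by setting the copy's symbol to $b$, and at a site where the envelope produced $\qm$ by drawing from the residual distribution $\big(\varphi(a)(\cdot)-\t{\varphi}(\t a)(\cdot)\big)/\t{\varphi}(\t a)(\qm)$, where $a\preceq\t a$ are the copy's and the envelope's neighbourhood patterns (using one fresh uniform per site shared by all copies). Each copy is then a genuine trajectory of $\Phi$, because the resulting one-site law $\t{\varphi}(\t a)(b)+\t{\varphi}(\t a)(\qm)\cdot\frac{\varphi(a)(b)-\t{\varphi}(\t a)(b)}{\t{\varphi}(\t a)(\qm)}=\varphi(a)(b)$ does not depend on $\t a$; all copies coincide wherever the envelope is not $\qm$; hence the probability that the copies disagree at the origin at time $0$ is at most $\t\Phi^t\big(\qm^{\Z^d},[\qm]\big)$. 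Since this coupling-from-the-past is no longer of the form $\Psi^t(x;U^{-t},\ldots,U^0)$ for a memoryless update function, you cannot invoke Proposition~\ref{prop:coupling} verbatim; you must rerun its proof (coupling inequality, Proposition~\ref{prop:spatial-mixing}, and the finite stopping rule for perfect sampling, which still works because the envelope's restriction to a finite window after $t$ steps depends on finitely many samples) for this slightly more general coupling. That adaptation is routine, but it is the missing content of the lemma.
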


The fact that the symbol $\qm$ dies out is equivalent to the ergodicity of the envelope PCA $\t{\Phi}$, but the ergodicity of the original PCA $\Phi$ does not in general imply the ergodicity of $\t{\Phi}$. When the alphabet has more than two elements, the definition of the envelope PCA can be refined so as to keep more information about the possible values that a question mark symbol represents~\cite{BusMaiMar13}. 

%\paragraph{Ergodicity criterion.}
%
%In this section, we still consider a PCA $\Phi$ and its envelope PCA $\t{\Phi}$. 
In the evolution of the envelope PCA, at each time step, the symbol at a site is updated to $\qm$ only if at least one of its neighbours is in state $\qm$, and in that case, it becomes a $\qm$ with probability at most
\begin{align}
	p_{\qm}(\Phi) &\isdef \t{\varphi}(\qm,\ldots, \qm)(\qm) \\
	&=
		%1-\sum_{s\in S}\min_{x_1,\ldots,x_m\in S} \varphi (x_1,\ldots, x_m)(s).
		1 - \sum_{b\in S} \min_{a_1,\ldots,a_m\in S}\varphi(a_1,\ldots,a_m)(b) \;.
%&=\max_{x_1,\ldots,x_m\in S} \varphi (x_1,\ldots, x_m)(\qO)-\min_{x_1,\ldots,x_m\in S} \varphi (x_1,\ldots, x_m)(\qO)\\
%&=\max_{x_1,\ldots,x_m\in S} \varphi (x_1,\ldots, x_m)(\qX)-\min_{x_1,\ldots,x_m\in S} \varphi (x_1,\ldots, x_m)(\qX).
\end{align}
This quantity measures the dependence of the transition probabilities on the value of the neighbourhood.

Let us consider an oriented graph $G$ describing the dependence relation between the sites in the space-time diagram of the PCA. The vertices of $G$ are the elements of $\Z^d\times\N$,  and there is an edge from $(k,t)$ to $(\ell, t+1)$ if $k\in \ell+\Neighb$. For a given parameter $p\in [0,1]$, the \emph{directed site percolation} on $G$ consists in declaring each site to be \emph{open} with probability $p$ and \emph{closed} otherwise, independently for different sites. One can show that there is a critical value $p_\critical(\Neighb)\in (0,1)$, such that when $p<p_\critical(\Neighb)$, there is almost surely no infinite open (oriented) path. By comparison with a branching process, one can easily show that $p_\critical(\Neighb)\geq 1/\abs{\Neighb}$.
%If $\Neighb=\{0,1\}$, the approximate value is known to be $p_\critical(\Neighb)\approx 0.7$. 
In one dimension, the value of $p_\critical(\Neighb)$ is known to be in $[\nicefrac{2}{3},\nicefrac{3}{4}]$ when $\Neighb=\{0,1\}$ and in $[\nicefrac{1}{2},\nicefrac{3}{4}]$ when $\Neighb=\{-1,0,1\}$ (see~\cite{PeaFle05}). %\siamak{Please check the last sentence.}

By dominating the appearances of symbol $\qm$ in the space-time diagram of the envelope PCA by a directed site percolation with parameter $p_{\qm}(\Phi)$, one proves that when $p_{\qm}(\Phi)<p_\critical(\Neighb)$, the symbol $\qm$ dies out.
%This technique is similar to the one developed in~\cite[Chap. 3]{TooVasStaMitKurPir90}, and details can be found in \cite{BusMaiMar13} too. Let us also mention that this ergodicity criterion is also tightly related to Dobrushin's uniqueness method, and Dobrushin-Shlosman criteria.

\begin{theorem}\label{thm-high}
	Let $\Phi$ be a PCA with neighbourhood $\Neighb$,
	and let $p_\critical(\Neighb)$ denote the critical value of the $(d+1)$-dimensional directed site percolation with neighbourhood $\Neighb$. If $p_{\qm}(\Phi)<p_\critical(\Neighb)$, 
then %$\t{\Phi}^t\delta_{\qm^{\Z^d}}[\qm]\to 0$ as $t\to\infty$, so that 
the PCA $\Phi$ is uniformly ergodic, and the unique invariant measure of $\Phi$ is spatially mixing and admits a perfect sampling algorithm.
\end{theorem}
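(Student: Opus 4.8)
The plan is to invoke the preceding lemma, which reduces the theorem to a single statement: that the density $\t{\Phi}^t\big(\qm^{\Z^d},[\qm]\big)$ of $\qm$ symbols at the origin, started from the all-$\qm$ configuration, tends to $0$ as $t\to\infty$. Once this is shown, uniform ergodicity together with spatial mixing and the existence of a perfect sampling algorithm all follow at once. By translation invariance of the envelope PCA this density equals $\P(\t{X}^t_0=\qm)$, where $(\t{X}^t)_{t\geq 0}$ denotes the trajectory of $\t{\Phi}$ with $\t{X}^0=\qm^{\Z^d}$, so it suffices to prove $\P(\t{X}^t_0=\qm)\to 0$.

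The heart of the argument is a coupling of this trajectory with a directed site percolation of parameter $p_{\qm}(\Phi)$ on the dependency graph $G$. For each site-time $(k,t+1)$ with $t\geq 0$ I attach an independent uniform variable $U_{k,t+1}$ on $[0,1]$ and declare $(k,t+1)$ to be open iff $U_{k,t+1}<p_{\qm}(\Phi)$; this makes the open sites i.i.d.\ Bernoulli$(p_{\qm}(\Phi))$ and independent of the past, as genuine percolation requires. I then drive the envelope PCA by the same variables: given $\t{X}^t$, let $q\isdef\t{\varphi}(\t{X}^t_{k+n_1},\ldots,\t{X}^t_{k+n_m})(\qm)$ be the probability that $(k,t+1)$ becomes $\qm$, set $\t{X}^{t+1}_k\isdef\qm$ iff $U_{k,t+1}<q$, and otherwise use $U_{k,t+1}$ to pick a symbol of $S$ according to the remaining transition probabilities. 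Two facts already recorded before the theorem make this work: $q\leq p_{\qm}(\Phi)$ for every neighbourhood pattern, so $\{\t{X}^{t+1}_k=\qm\}\subseteq\{(k,t+1)\text{ open}\}$ holds pathwise; and $q=0$ whenever all neighbours of $k$ carry symbols of $S$, so a site can become $\qm$ only if one of its neighbours was $\qm$ at the previous step. Iterating these two implications from time $t$ down to time $0$, the event $\t{X}^t_0=\qm$ forces the existence of an oriented path $(k_0,0),(k_1,1),\ldots,(k_t,t)=(0,t)$ in $G$ all of whose sites above level $0$ are open. Consequently $\P(\t{X}^t_0=\qm)$ is at most the probability that such an open oriented path joins level $0$ to $(0,t)$.

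Finally I would read off the decay from subcriticality. By spatial translation invariance, the probability that an open oriented path joins level $0$ to $(0,t)$ equals the probability of the event $A_t$ that the origin $(0,0)$ is the endpoint of an open oriented path starting $t$ levels below it. The events $A_t$ are decreasing in $t$, and $\bigcap_t A_t$ is the event that an infinite open oriented path terminates at the origin. Since $p_{\qm}(\Phi)<p_\critical(\Neighb)$, there is almost surely no infinite open oriented path, so $\P(A_t)\to 0$ by continuity of measure. Hence $\P(\t{X}^t_0=\qm)\leq\P(A_t)\to 0$, the density of $\qm$ dies out, and the preceding lemma delivers all three conclusions.

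The step I expect to be the main obstacle is the coupling itself, where one must simultaneously reproduce the exact transition law of the envelope PCA, keep the open indicators i.i.d.\ and independent of the past so that the comparison is honest directed percolation, and preserve the pathwise inclusion $\{\t{X}^{t+1}_k=\qm\}\subseteq\{(k,t+1)\text{ open, some neighbour }\qm\}$. The two monotonicity properties of $p_{\qm}(\Phi)$ noted above — its role as a uniform upper bound for $q$, and the vanishing of $q$ in the absence of a $\qm$ neighbour — are precisely what reconcile these three demands, so once they are in hand the remainder is routine.
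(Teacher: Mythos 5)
Your proposal is correct and follows essentially the same route as the paper: dominate the occurrences of $\qm$ in the space-time diagram of the envelope PCA $\t{\Phi}$ by a subcritical directed site percolation with parameter $p_{\qm}(\Phi)$, conclude that the density of $\qm$ started from $\qm^{\Z^d}$ tends to $0$, and invoke the preceding lemma for uniform ergodicity, spatial mixing and perfect sampling. The explicit single-uniform coupling you describe (open iff $U_{k,t+1}<p_{\qm}(\Phi)$, become $\qm$ iff $U_{k,t+1}<q$ with $q\leq p_{\qm}(\Phi)$ and $q=0$ absent a $\qm$ neighbour) is a valid and somewhat more detailed implementation of the domination the paper only sketches.
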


As a consequence, we obtain the following proposition, and the two corollaries that follow from it.

\begin{proposition}\label{prop:coupling_hzr}
	Let $F$ be a deterministic CA with alphabet $S$ and neighbourhood $\Neighb$, and let $\theta$ be the transition matrix of a zero-range noise.  If
	\begin{align}
		\sum_{b\in S}\min_{a\in S} \theta(a,b) &> 1-p_\critical(\Neighb) \;,
	\end{align}
	then the noisy version of $F$ with noise $\theta$ is uniformly ergodic.
	Furthermore, the unique invariant measure in that case is spatially mixing and admits a perfect sampling algorithm.
\end{proposition}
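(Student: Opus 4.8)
The plan is to derive this proposition as an immediate corollary of Theorem~\ref{thm-high}. Writing $\Phi$ for the noisy version of $F$ with noise $\theta$, it suffices to verify that the hypothesis $\sum_{b\in S}\min_{a\in S}\theta(a,b) > 1 - p_\critical(\Neighb)$ forces the envelope parameter $p_{\qm}(\Phi)$ to lie strictly below the percolation threshold $p_\critical(\Neighb)$. Once this is established, the full conclusion (uniform ergodicity, spatial mixing, and the existence of a perfect sampling algorithm) follows verbatim from Theorem~\ref{thm-high}.

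First I would recall that for a zero-range noise the local rule of $\Phi$ factors as $\varphi(a_1,\ldots,a_m)(b) = \theta\big(f(a_1,\ldots,a_m),b\big)$, where $f$ is the local rule of $F$. Substituting this into the definition
\begin{align}
	p_{\qm}(\Phi) &= 1 - \sum_{b\in S}\min_{a_1,\ldots,a_m\in S}\varphi(a_1,\ldots,a_m)(b)
\end{align}
turns the inner minimum into $\min_{a_1,\ldots,a_m\in S}\theta\big(f(a_1,\ldots,a_m),b\big)$. The key observation is that, as $(a_1,\ldots,a_m)$ ranges over $S^m$, the argument $f(a_1,\ldots,a_m)$ ranges only over the image $f(S^m)\subseteq S$, so minimizing $\theta(\,\cdot\,,b)$ over this image can only exceed the minimum over all of $S$:
\begin{align}
	\min_{a_1,\ldots,a_m\in S}\theta\big(f(a_1,\ldots,a_m),b\big) &\geq \min_{a\in S}\theta(a,b) \;.
\end{align}
Summing over $b$ and feeding the result back into the definition of $p_{\qm}(\Phi)$ yields $p_{\qm}(\Phi) \leq 1 - \sum_{b\in S}\min_{a\in S}\theta(a,b)$. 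The hypothesis then gives $p_{\qm}(\Phi) \leq 1 - \sum_{b\in S}\min_{a\in S}\theta(a,b) < p_\critical(\Neighb)$, and Theorem~\ref{thm-high} applies.

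I do not expect any genuine obstacle here: the analytic content is wholly contained in Theorem~\ref{thm-high}, and the only point to notice is that the zero-range structure lets one replace the minimum over all neighbourhood patterns by a minimum over single symbols (in fact over the possibly smaller set $f(S^m)$), which is exactly what produces the clean comparison with $\sum_{b\in S}\min_{a\in S}\theta(a,b)$. One may remark in passing that when $F$ is surjective one has $f(S^m)=S$ and the two quantities coincide, but surjectivity is not needed for the inequality above.
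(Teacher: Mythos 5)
Your proposal is correct and follows exactly the paper's route: the paper's proof is the one-line observation that $p_{\qm}(\Phi)\leq 1-\sum_{b\in S}\min_{a\in S}\theta(a,b)$, after which Theorem~\ref{thm-high} applies. Your write-up merely fills in the (straightforward) justification of that inequality via the factorization $\varphi(a_1,\ldots,a_m)(b)=\theta\big(f(a_1,\ldots,a_m),b\big)$ and the fact that minimizing over the image $f(S^m)$ dominates minimizing over all of $S$.
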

\begin{proof}
	The noisy version of $F$ with noise $\theta$ satisfies $p_{\qm}\leq 1-\sum_{b\in S}\min_{a\in S} \theta(a,b)$. 
\end{proof}

\begin{corollary}
%	Let $F$ be a deterministic CA, and let $\theta$ be a $\varepsilon$-uniform noise. If $\varepsilon>1-p_\critical(\Neighb)$, then the noisy version of $F$ with noise $\theta$ is uniformly ergodic and its unique invariant measure is spatially mixing.
	Let $F$ be a deterministic CA with neighbourhood $\Neighb$, and let $\theta$ be a memoryless noise with error probability $\varepsilon$. If $\varepsilon>1-p_\critical(\Neighb)$, then the noisy version of $F$ with noise $\theta$ is uniformly ergodic, and has an invariant measure that is spatially mixing and which admits a perfect sampling algorithm.
\end{corollary}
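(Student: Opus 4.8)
The plan is to deduce this corollary immediately from Proposition~\ref{prop:coupling_hzr} by evaluating the quantity $\sum_{b\in S}\min_{a\in S}\theta(a,b)$ for the memoryless noise matrix and checking that the threshold condition of that proposition reduces exactly to the hypothesis $\varepsilon>1-p_\critical(\Neighb)$. Since a memoryless noise is in particular a zero-range noise, Proposition~\ref{prop:coupling_hzr} applies verbatim once its inequality is verified, and its conclusion already delivers uniform ergodicity together with spatial mixing and the perfect sampling algorithm for the unique invariant measure. So the only work is a one-line computation.

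First I would recall that the memoryless noise matrix has the form $\theta(a,b)=(1-\varepsilon)\delta_a(b)+\varepsilon q(b)$. Fixing a target symbol $b\in S$ and letting the source symbol $a$ range over $S$, one has $\theta(b,b)=(1-\varepsilon)+\varepsilon q(b)$ and $\theta(a,b)=\varepsilon q(b)$ for $a\neq b$. Because $1-\varepsilon\geq 0$, the diagonal entry is the largest, so the minimum over $a$ is attained at any $a\neq b$ and equals $\varepsilon q(b)$ (the degenerate case $\abs{S}=1$ being trivial, as then the CA is constant). Summing over $b$ and using that $q$ is a probability distribution,
\begin{align}
	\sum_{b\in S}\min_{a\in S}\theta(a,b) &= \sum_{b\in S}\varepsilon q(b) = \varepsilon \;.
\end{align}

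Thus the hypothesis $\varepsilon>1-p_\critical(\Neighb)$ is precisely the inequality $\sum_{b\in S}\min_{a\in S}\theta(a,b)>1-p_\critical(\Neighb)$ required by Proposition~\ref{prop:coupling_hzr}, and applying that proposition to $F$ with the memoryless noise $\theta$ yields the claim. There is no genuine obstacle here: the substance of the argument lives entirely in the already-established Proposition~\ref{prop:coupling_hzr} (and behind it, in Theorem~\ref{thm-high} and the envelope-PCA/percolation comparison). The only point meriting a word of care is the verification that the per-symbol minimum is realised off the diagonal, which is immediate from $1-\varepsilon\geq 0$ and requires merely $\abs{S}>1$.
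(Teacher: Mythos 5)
Your proof is correct and follows exactly the paper's own route: compute $\min_{a\in S}\theta(a,b)=\varepsilon q(b)$, sum to get $\varepsilon$, and invoke Proposition~\ref{prop:coupling_hzr}. The extra care you take with the off-diagonal minimum and the trivial case $\abs{S}=1$ is harmless but not needed.
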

\begin{proof}
	Let $q$ be the replacement distribution of the noise so that
	$\theta(a,b)=(1-\varepsilon)\delta_a(b)+\varepsilon q(b)$.
	Then, $\min_{a\in S}\theta(a,b)=\varepsilon q(b)$ and
	the claim follows from Proposition~\ref{prop:coupling_hzr}.
\end{proof}

\begin{corollary} Let $F$ be a deterministic CA with binary symbol set $S=\{\qO,\qX\}$ and neighbourhood $\Neighb$, and let $\theta$ be a zero-range noise. If $\abs{\theta(\qO,\qX)-\theta(\qX,\qX)}<p_\critical(\Neighb)$, then the noisy version of $F$ with noise $\theta$ is uniformly ergodic, and has an invariant measure that is spatially mixing and which admits a perfect sampling algorithm.
\end{corollary}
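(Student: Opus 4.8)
The plan is to reduce the statement directly to Proposition~\ref{prop:coupling_hzr}. That proposition asserts uniform ergodicity (together with spatial mixing and a perfect sampling algorithm, via Theorem~\ref{thm-high}) whenever $\sum_{b\in S}\min_{a\in S}\theta(a,b) > 1-p_\critical(\Neighb)$. So the only thing I need to verify is that, for a two-symbol alphabet, the hypothesis $\abs{\theta(\qO,\qX)-\theta(\qX,\qX)}<p_\critical(\Neighb)$ is precisely a restatement of the hypothesis of Proposition~\ref{prop:coupling_hzr}. Concretely, I will show the identity $\sum_{b\in S}\min_{a\in S}\theta(a,b) = 1-\abs{\theta(\qO,\qX)-\theta(\qX,\qX)}$, which also matches the upper bound $p_{\qm}\leq 1-\sum_{b\in S}\min_{a\in S}\theta(a,b)$ recorded in the proof of Proposition~\ref{prop:coupling_hzr}.

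The computation is a short exercise using the stochasticity of $\theta$. Writing $p\isdef\theta(\qO,\qX)$ and $q\isdef\theta(\qX,\qX)$, the constraints $\theta(\qO,\qO)+\theta(\qO,\qX)=1$ and $\theta(\qX,\qO)+\theta(\qX,\qX)=1$ give $\theta(\qO,\qO)=1-p$ and $\theta(\qX,\qO)=1-q$. Splitting the sum over the two columns $b=\qO$ and $b=\qX$ then yields
\begin{align*}
	\sum_{b\in S}\min_{a\in S}\theta(a,b)
		&= \min\{\theta(\qO,\qO),\theta(\qX,\qO)\} + \min\{\theta(\qO,\qX),\theta(\qX,\qX)\} \\
		&= \min\{1-p,\,1-q\} + \min\{p,\,q\} \\
		&= \big(1-\max\{p,q\}\big) + \min\{p,q\} \\
		&= 1 - \abs{p-q} \;.
\end{align*}
Hence $1-\sum_{b\in S}\min_{a\in S}\theta(a,b) = \abs{\theta(\qO,\qX)-\theta(\qX,\qX)}$, so the assumed inequality $\abs{\theta(\qO,\qX)-\theta(\qX,\qX)}<p_\critical(\Neighb)$ is equivalent to $\sum_{b\in S}\min_{a\in S}\theta(a,b) > 1-p_\critical(\Neighb)$, and Proposition~\ref{prop:coupling_hzr} applies verbatim.

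There is no genuine obstacle here: the entire content has already been carried by Theorem~\ref{thm-high} and Proposition~\ref{prop:coupling_hzr}, and this corollary is simply the specialization of the agreement probability $\sum_{b}\min_{a}\theta(a,b)$ to the binary case. The only point worth spelling out is the algebraic identity above, whose intuitive meaning is that in the two-symbol setting the probability that the envelope update ignores the neighbourhood (i.e.\ produces a symbol in $S$ rather than $\qm$) is governed by how much the two rows of $\theta$ fail to coincide in their $\qX$-entries, namely by $\abs{\theta(\qO,\qX)-\theta(\qX,\qX)}$.
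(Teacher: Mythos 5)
Your proposal is correct and follows essentially the same route as the paper: both reduce the statement to Proposition~\ref{prop:coupling_hzr} and verify, using the row-stochasticity of $\theta$, the identity $1-\sum_{b\in S}\min_{a\in S}\theta(a,b)=\abs{\theta(\qO,\qX)-\theta(\qX,\qX)}$. The algebra is the same computation written in slightly different order.
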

\begin{proof}
	In this case, we have 
	\begin{align}
		1-\mathop{\smash[b]{\sum_{b\in S}}}\mathop{\smash[b]{\min_{a\in S}}} \theta(a,b) &=
			1-\min\{\theta(\qO,\qO),\theta(\qX,\qO)\}-\min\{\theta(\qO,\qX),\theta(\qX,\qX)\}\\
		&=
			\max\{\theta(\qO,\qX),\theta(\qX,\qX)\}-\min\{\theta(\qO,\qX),\theta(\qX,\qX)\}\\
		&=
			\abs{\theta(\qO,\qX)-\theta(\qX,\qX)} \;,
	\end{align}
	thus the claim follows from Proposition~\ref{prop:coupling_hzr}.
\end{proof}

%\begin{proof} The noisy version of $F$ with noise $\theta$ satisfies: $p_{\qm}\leq 1-\sum_{b\in S}\min_{a\in S} \theta(a,b)$, the inequality being strict only if some symbol of $S$ cannot be reached by the local rule $f$ of the CA. 
%
%If the zero-range noise is $\varepsilon$-uniform, we get: $p_{\qm}\leq 1-\varepsilon$.
%
%In the special case of a binary symbol set, we have: 
%\begin{align}p_{\qm}=&1-\min\{\theta(\qO,\qO),\theta(\qX,\qO)\}-\min\{\theta(\qO,\qX),\theta(\qX,\qX)\}\\
%=&\max\{\theta(\qO,\qX),\theta(\qX,\qX)-\min\{\theta(\qO,\qX),\theta(\qX,\qX)\}\\
%=&|\theta_{\qO,\qX}-\theta_{\qX,\qX}|,
%\end{align} unless the CA is constant, in which case we get $p_{\qm}=0$.
%
%The conclusions of Prop.~\ref{prop:coupling_hzr} and of its two corollaries then directly follow from Theorem~\ref{thm-high}.
%\end{proof}

\subsection{Small perturbations of nilpotent CA}
\label{sec:nilpotent}

%Let $F$ be a \emph{nilpotent} CA. It means that there exists an integer $N$ such that $F^N$ is a constant function, equal to $\alpha^{\Z^d}$ for some symbol $\alpha\in S$.
A CA $F$ is \emph{nilpotent} if there is a non-negative integer $N$ such that $F^N$ is a constant function.  Clearly, the unique value of $F^N$ has to be a configuration $\alpha^{\Z^d}$ with the same symbol $\alpha\in S$ at each site.
Observe that the NEC-majority CA (Example~\ref{exp:toom}) is not nilpotent, for it has two distinct fixed points.

Without noise, a nilpotent CA ``forgets'' its initial configuration in a finite number of steps.  It is therefore hard to imagine that adding noise could keep the CA from forgetting its initial configuration.  On the other hand, the envelope PCA introduced in the previous section is not directly applicable to prove the ergodicity of the noisy CA.
Indeed, suppose that $F$ is nilpotent.
If $F$ itself is not a constant function, then for an $\varepsilon$-perturbation of $F$ with small $\varepsilon$, the value $p_{\qm}$ is close to $1$, hence Theorem.~\ref{thm-high}
%(and its analogous for larger symbol sets)
does not say anything about the ergodicity of such perturbations of $F$. %In that case, the envelope PCA defined in Section~\ref{sec:envelope} is not applicable in proving the ergodicity.
Nevertheless, the ergodicity can still be shown using a different coupling-from-the-past argument.

\begin{theorem}\label{thm-nilp}
	Let $F$ be a nilpotent CA. There exists $\varepsilon_\critical>0$ such that for $\varepsilon<\varepsilon_\critical$, every  $\varepsilon$-perturbation of $F$ is uniformly ergodic.
	%with a spatially mixing invariant measure.
	Furthermore, the unique invariant measure of such a perturbation is spatially mixing and admits a perfect sampling algorithm.
\end{theorem}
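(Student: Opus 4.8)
The plan is to exploit nilpotency by passing to the $N$-th iterate of the perturbed PCA, where the underlying deterministic map becomes constant, and then to place this iterate in the high-noise regime so that Theorem~\ref{thm-high} applies. Let $N$ be an integer with $F^N\equiv\alpha^{\Z^d}$, where $\alpha\in S$ is the limit symbol. The crucial observation is that although an $\varepsilon$-perturbation $\Phi$ of $F$ itself sits in the low-noise regime (so that $p_{\qm}(\Phi)$ is close to $1$ and Theorem~\ref{thm-high} is useless for $\Phi$ directly), the $N$-step PCA $\Phi^N$ is a perturbation of the \emph{constant} CA $F^N$, for which any dependence of the output on the input is an artefact of the noise alone and can therefore be made small.

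First I would quantify this. Let $M\isdef\sum_{i=0}^{N-1}\abs{\Neighb^i}$ be the number of single-site rule applications needed to compute $(F^Nx)_k$ from $x$ through the backward light cone. If each of these $M$ updates follows $F$ (an event of probability at least $(1-\varepsilon)^M$, by independence of the noise across sites and times), then the simulation produces $(F^Nx)_k=\alpha$ regardless of $x$. Consequently, for every neighbourhood pattern the $N$-step local rule of $\Phi^N$ assigns probability at least $(1-\varepsilon)^M$ to the symbol $\alpha$, so that
\begin{align}
	p_{\qm}(\Phi^N) &= 1-\sum_{b\in S}\min_{\text{patterns}}\big(\text{local rule of }\Phi^N\big)(b) \leq 1-(1-\varepsilon)^M \;.
\end{align}
Since $\Phi^N$ has neighbourhood $\Neighb^N$ and $p_\critical(\Neighb^N)\geq 1/\abs{\Neighb^N}>0$ is a fixed positive constant, choosing $\varepsilon_\critical\isdef 1-\big(1-p_\critical(\Neighb^N)\big)^{1/M}$ guarantees that $p_{\qm}(\Phi^N)<p_\critical(\Neighb^N)$ for all $\varepsilon<\varepsilon_\critical$. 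Theorem~\ref{thm-high}, applied to $\Phi^N$, then shows that $\Phi^N$ is uniformly ergodic with a unique invariant measure $\pi$ that is spatially mixing and admits a perfect sampling algorithm.

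It remains to transfer these conclusions from $\Phi^N$ to $\Phi$. Any measure invariant under $\Phi$ is invariant under $\Phi^N$, hence equals $\pi$; conversely $\pi\Phi$ is invariant under $\Phi^N$ (as $(\pi\Phi)\Phi^N=(\pi\Phi^N)\Phi=\pi\Phi$), so $\pi\Phi=\pi$ by uniqueness, and $\pi$ is the unique invariant measure of $\Phi$. Writing $t=sN+j$ with $0\leq j<N$ and using $\mu\Phi^t=(\mu\Phi^j)(\Phi^N)^s$, convexity gives $\norm{\mu\Phi^t-\pi}_A\leq\sup_y\norm{(\Phi^N)^s(y,\cdot)-\pi}_A$, which tends to $0$ uniformly in $\mu$ and $j$ by the uniform ergodicity of $\Phi^N$; thus $\Phi$ is uniformly ergodic. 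Spatial mixing is a property of $\pi$ alone, and the perfect sampling algorithm of Proposition~\ref{prop:coupling} for $\Phi^N$ already samples $\pi$, so both properties pass to $\Phi$. (Equivalently, since $p_t(\Phi)$ is non-decreasing in $t$ and $p_{sN}(\Phi)=p_s(\Phi^N)\to 1$, one has $p_t(\Phi)\to 1$ and may invoke Proposition~\ref{prop:coupling} for $\Phi$ directly.) The main obstacle is really the opening insight: recognising that nilpotency lets one escape the low-noise regime by iterating, together with the elementary but essential fact that $p_\critical(\Neighb^N)$ stays bounded away from zero, so that a single threshold $\varepsilon_\critical>0$ works.
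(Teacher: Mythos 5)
Your opening move --- coarse-graining time into blocks of $N$ steps so that the underlying deterministic map becomes the constant $\alpha^{\ZZ^d}$ --- is exactly the right insight, and it is also what the paper's proof does. But there is a genuine gap in how you cash it in: \emph{$\Phi^N$ is not a PCA}. The class of PCA is not closed under composition: given $X^0=x$, the symbols $X^N_k$ and $X^N_{k'}$ at distinct sites are \emph{not} conditionally independent, because their backward light cones overlap and share the intermediate randomness. Consequently $\Phi^N$ has no ``local rule'' in the sense required by Section~\ref{sec:envelope}, the quantity $p_{\qm}(\Phi^N)$ you write down is undefined, and Theorem~\ref{thm-high} --- whose proof dominates the $\qm$-symbols by an \emph{independent} directed site percolation --- cannot be applied to $\Phi^N$. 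What is true is the single-site marginal bound $\Phi^N(x,[\alpha]_k)\geq(1-\varepsilon)^M$ uniformly in $x$; but the events ``some update in the cone above $(k,0)$ deviates from $f$'' for different $k$ are dependent (the windows overlap), so the percolation comparison breaks down precisely at the step where independence across sites is needed.

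The paper circumvents this by running the coupling-from-the-past directly and replacing the percolation comparison with a counting argument that is immune to the dependence: the set $A_{i+1}$ of ``contaminated'' points at coarse time $i+1$ is bounded by observing that each individual error contributes at most $L=m_{N-1}m_N$ points to $A_{i+1}$ and that at most $M\abs{A_i}$ space-time positions can host a relevant error, giving a stochastic domination of $\abs{A_{i+1}}$ by $L$ times a Binomial$(M\abs{A_i},\varepsilon)$ variable and hence a subcritical branching process when $\varepsilon<1/(LM)$. Your argument could be repaired along your own lines by noting that the field of ``bad cone'' events is $k$-dependent with marginals $1-(1-\varepsilon)^M$ and invoking a domination theorem for finitely dependent percolation (Liggett--Schonmann--Stacey) to compare with an independent field of small parameter; but some such step is indispensable, and as written the proof silently assumes an independence that fails. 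The final paragraph transferring ergodicity from $\Phi^N$ to $\Phi$ (uniqueness of the invariant measure, $\norm{\mu\Phi^{sN+j}-\pi}_A\to 0$, and $p_{sN}(\Phi)=p_s(\Phi^N)$) is fine once the claim about $\Phi^N$ is actually established.
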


\begin{proof} Let $\varepsilon>0$, and let $\Phi$ be an $\varepsilon$-perturbation of $F$. We prove that if $\varepsilon$ is small enough, we can couple all the trajectories of $\Phi$. 

Since $\Phi$ is an $\varepsilon$-perturbation of $F$, its local rule can be written as
\begin{align}
	\varphi(a_1,\ldots,a_m)(b) &=
		(1-\varepsilon)\delta_{f(a_1,\ldots,a_m)}(b)
			+ \varepsilon\tilde{\varphi}(a_1,\ldots,a_m)(b)
\end{align}
where $\delta_{f(a_1,\ldots,a_m)}$ is the distribution with unit mass at $f(a_1,\ldots,a_m)$ and $\tilde{\varphi}$ is another local rule.
Let $\u:S^m\times[0,1]\to S$ be an update function for $\varphi$ with the property that $\u(a_1,\ldots,a_m;u)=f(a_1,\ldots,a_m)$ when $u>\varepsilon$; when $u\leq\varepsilon$, the value of $\u(a_1,\ldots,a_m;u)$ could be different from $f(a_1,\ldots,a_m)$.  Thus, if $U$ is a random variable uniformly distributed over $[0,1]$, then the value $\u(a_1,\ldots,a_m;U)$ may disagree with $f(a_1,\ldots,a_m)$ with probability at most $\varepsilon$.

Let $U=(U_k^n)_{k\in\ZZ^d,n\in\NN^-}$ be a collection of independent random samples uniformly drawn from $[0,1]$.  We simulate $\Phi$ from the past using the update function $\u$ and the samples $U$.
%Let $(X^{-t})_{0\leq t\leq T}$ denote the resulting trajectory if we start the simulation at time $-T$.
Let $K$ be a finite subset of $\ZZ^d$.
We prove that almost surely, there exists a time $T>0$ such that the trajectories from all possible starting configurations at time $-T$ provide the same pattern $X^0_K$ on $K$ at time $0$. In particular, $p_t(\Phi)\to 1$ as $t\to\infty$, and the uniform ergodicity and the spatial mixing of the invariant measure follow from Proposition~\ref{prop:coupling}.

%We consider a configuration $(x_{k,0})_{k\in \Z^d}$ obtained at time $0$, after iterating $\Phi$ from a given time in the past, using a sequence $(u_{k,-t})_{k\in \Z^d, t\geq 0}$ of independent random samples drawn uniformly from $[0,1]$, and an update function having the following property: if $u_{k,-t}>\varepsilon$, then cell $k$ is updated according to the local rule of the deterministic CA $F$, while if $u_{k,-t}\leq \varepsilon$, the value may differ. Let $K$ be a finite subset of points of $\Z^d$.  We prove that almost surely, there exists a time $T>0$ such that the evolutions from all the possible starting configurations at time $-T$ provide the same sequence $(x_{k,0})_{k\in K}$ at time~$0$.

Let $N\geq 1$ be such that $F^N$ is constant.  The value of this constant has to be a configuration $\alpha^{\Z^d}$ with the same symbol $\alpha$ at every site.  Let $\Neighb$ denote the neighbourhood of the local rule of $F$.  Consider the following subset of the space-time $\ZZ^d\times\NN^-$:
\begin{align}
	W &\isdef \{(\ell,-i) : \text{$0\leq i\leq N-1$ and $\ell\in\Neighb^i$ }\} \;.
\end{align}
We say that an \emph{error} has occurred at position $(k,-t)$ if $U^{-t}_k\leq\varepsilon$. Since $F^N$ is a constant function, if the set $(k,-t)+W$ contains no error, then we know that $X^{-t}_k=\alpha$.

For $k\in \Z^d$ and $t\geq 0$, let us define the random set
\begin{align}
	E(k,-t) &\isdef
		\begin{cases}
			%(k,-t)+(\Neighb^N,-N) & \text{if $(k,-t)+W$ contains an error,} \\
			\{(k,-t)+(m,-N): m\in\Neighb^N\} & \text{if $(k,-t)+W$ contains an error,} \\
			\varnothing & \text{otherwise.}
		\end{cases}
\end{align}
We recursively define a sequence of sets $A_0, A_1, \ldots$ by setting $A_0\isdef K\times\{0\}$ and
\begin{align}
	A_{i+1}\isdef E(A_i) &= \bigcup_{(k,-t)\in A_i} E(k,-t) %\qquad\text{for $i\geq 0$.}
\end{align}
for $i\geq 0$.
%Note that if $(k,-t)\in A_i$, then $t=iN$.
Clearly, $t=iN$ for every $(k,-t)\in A_i$.
%We have the following property: if $A_i$ is empty, then if we iterate $\Phi$ from time $-iN$ to time $0$, using the samples $(u_{k,-t})_{k\in\Z^d, 0\leq t< iN}$, the values $(x_{k,0})_{k\in K}$ obtained on $K$ at time $0$ do not depend on the choice of the configuration $(x_{k,-iN})_{k\in\Z^d}\in\X$ from which we start at time $-iN$ (see Fig.~\ref{fig:nilpotent}).
Observe that if $A_i=\varnothing$, then running the simulation from time $-iN$ till $0$ using the samples in $U$ will lead to a pattern $X^0_K$ on $K$ at time $0$ that does not depend on the choice of the configuration $X^{-iN}$ at time $-iN$ (see Fig.~\ref{fig:nilpotent}).
%It is therefore enough to show that almost surely, $A_i=\varnothing$ for some $i$.

\begin{figure}[h]
\begin{center}
\begin{tikzpicture}[scale=1.5]
\foreach \y in {0,1,2,3} \draw[ultra thin] (-4,-\y) --(4,-\y) ;
\node[left] at (-4.5,0) {$t=0$} ;
\node[left] at (-4.5,-1) {$t=-N$} ;
\node[left] at (-4.5,-2) {$t=-2N$} ;
\node[left] at (-4.5,-3) {$t=-3N$} ;

\draw[blue, ultra thick] (-1,0) -- (-0.25,0) ;
\draw[blue, ultra thick] (0.75,0) -- (1,0) ; 
\draw[decorate,decoration={snake,amplitude=1pt,segment length=5pt},ultra thick] (-0.25,0) -- (0.75,0) ;

\draw (-1,0)--(-2,-1) ;
\draw (1,0)--(2,-1) ;

\draw[dashed] (0.75,0)--(-0.25,-1) ;
\draw[dashed] (0.75,0)--(1.75,-1) ;
\draw[dashed] (-0.25,0)--(-1.25,-1) ;
\draw[dashed] (-0.25,0)--(0.75,-1) ;

\draw[dashed] (-0.25,0)--(-1.25,-1) ;
\draw[dashed] (-0.25,0)--(0.75,-1) ;

\draw[red, fill=red] (0.25,-0.5) circle [radius=0.05]  ;

\draw[blue, ultra thick] (-0.75,-1)--(1.75,-1) ;
\draw[decorate,decoration={snake,amplitude=1pt,segment length=5pt},ultra thick] (-1.25,-1)--(-0.75,-1) ;

\draw (-1.25,-1)--(-2.25,-2) ;
\draw (1.75,-1)--(2.75,-2) ;

\draw[dashed] (-0.75,-1)--(-1.75,-2) ;
\draw[dashed] (-0.75,-1)--(0.25,-2) ;
\draw[dashed] (-0.95,-1)--(-1.95,-2) ;
\draw[dashed] (-0.95,-1)--(0.05,-2) ;

\draw[red, fill=red] (-1.1,-1.35)  circle [radius=0.05]  ;
\draw[red, fill=red] (-1.4,-1.45) circle [radius=0.05]  ;

\draw[blue, ultra thick] (-2.25,-2)--(0.25,-2) ;

\draw (-2.25,-2)--(-3.25,-3) ;
\draw (0.25,-2)--(1.25,-3) ;

\node[above] at (0,0) {$A_0=K\times\{0\}$} ; 
\node[below] at (0.25,-1) {$A_1$} ; 
\node[below] at (-1,-2) {$A_2$} ; 
\node[below] at (-1,-3) {$A_3=\varnothing$} ; 

\end{tikzpicture}
\end{center}
\caption{Illustration of the the proof of Theorem~\ref{thm-nilp}. Errors are represented by red dots. Blue domains represent sites that are known to be in state $\alpha$ (i.e., there are no errors affecting them in the last $N$ time steps). Black (wavy) domains represent sites for which further information from the past might be needed to determine their states.}
\label{fig:nilpotent}
\end{figure}
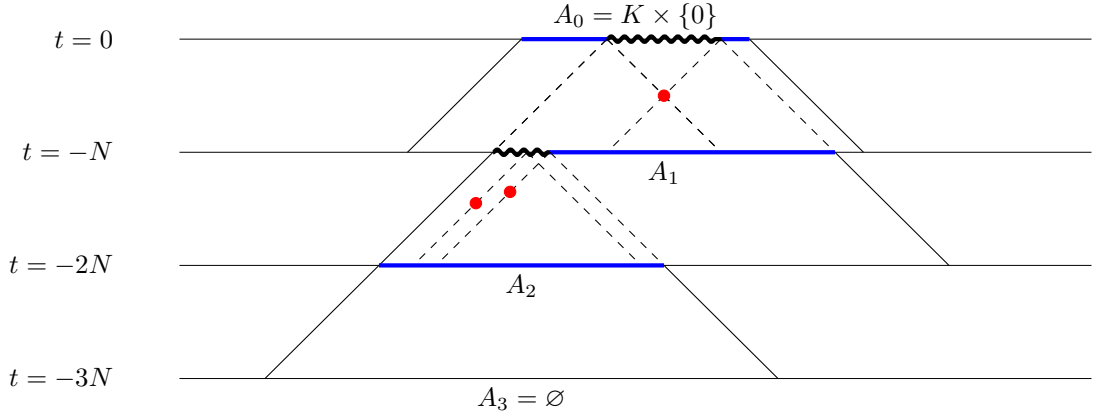

It remains to prove that if $\varepsilon$ is small enough, then almost surely, there exists an integer after which all the sets $A_i$ are empty.

We set $m_i\isdef\abs{\Neighb^i}$.
If there is an error inside $(k,-iN)+W$, then $\abs{E(k,-iN)}=m_N$.
Let $(\ell,-t)$ be a space-time position with $t=iN+j$ and $0\leq j\leq N-1$.  Then, we have $(\ell,-t)\in (k,-iN)+W$ if and only if $k\in \ell-\Neighb^j$. Thus, the number of points $(k,-iN)$ such that $(\ell,-t)$ is in $(k,-iN)+W$ is bounded by $m_j\leq m_{N-1}$.  It follows that an error at $(\ell,-t)$ has a contribution of at most $L\isdef m_{N-1}m_N$ points to $A_{i+1}$.

%We set $m_i\isdef\abs{\Neighb^i}$. Let $(\ell,-t)$ be a space-time point with $t=iN+j$, $0\leq j\leq N-1$ containing an error. We have $(\ell,-t)\in (k,-iN)+W$ if and only if $k\in \ell-\Neighb^j$. Thus, the number of points $(k,-iN)$ such that $(\ell,-t)$ is an error of  $(k,-iN)+W$ is bounded by $m_j\leq m_{N-1}$. If there is an error in $(k,-iN)+W$, then $\abs{E(k,-iN)}=m_N$. It follows that a given error has a contribution of at most $L\isdef m_{N-1}m_N$ points to $A_{i+1}$.

Let $M\isdef m_0+m_1+\ldots+m_{N-1}$, so that $\abs{W}=M$.
The number of points in $\bigcup_{(k,-iN)\in A_i} (k,-iN)+W$ is thus smaller than $\abs{A_i}\times M$, and an error occurs at each point independently with probability $\varepsilon$. Consequently, $\abs{A_{i+1}}$ is bounded by the sum of at most $\abs{A_i}\times M$ independent random variables, each taking value $L$ with probability $\varepsilon$, and $0$ with probability $1-\varepsilon$. If $\varepsilon<1/(LM)$, a comparison with a branching process shows that there is extinction: almost surely, the sets $A_i$ are eventually empty.
%Consequently, $\Phi$ is ergodic.
The claim follows.
\end{proof}

Let us remark that the bound given for $\varepsilon$ in the above proof is rough and can certainly be improved.
%
%The result and the proof can easily be extended to multi-dimensional nilpotent CA.

\subsection{CA with a spreading state}
\label{sec:spreading}

Let $F$ be a deterministic CA with symbol set $S$ and neighbourhood $\Neighb$. We say that a symbol $\alpha\in S$ is \emph{spreading} under $F$ if $\abs{\Neighb}\geq 2$ and
%$$(x_{k+n}=\alpha \; \mbox{ for some } \; n\in\Neighb) \quad \implies \quad F(x)_{k}=\alpha.$$
$F(x)_k=\alpha$ whenever $x_{k+n}=\alpha$ for some $n\in\Neighb$.
By definition, a CA can have at most one spreading symbol.
For comparison, let us note that in Toom's NEC-majority CA (Example~\ref{exp:toom}), neither of the two symbols $\symb{0}$ and $\symb{1}$ is spreading.
Here, we prove the ergodicity of perturbations of a CA with a spreading symbol for two classes of perturbations.  Another class of perturbations is treated in Section~\ref{sec:mobius:binary}, under the extra assumption that the alphabet is binary.

\subsubsection{Memoryless noise}

%Let $q\in \M(S).$ For some $\varepsilon\in(0,1)$, we consider the zero-range memoryless noise $R$ defined by $\forall s\in S, R(s)=(1-\varepsilon)\delta_s+\varepsilon q.$
Consider a memoryless noise $\theta$ with error probability $\varepsilon$ and replacement distribution $q$, so that $\theta(a,b)=(1-\varepsilon)\delta_a(b)+\varepsilon q(b)$.
We say that the noise is \emph{$\alpha$-positive} if $q(\alpha)>0$.

\begin{theorem}\label{thm-spreading-zrmn} Let $F$ be a CA with spreading state $\alpha$. Then, every perturbation of $F$ by an $\alpha$-positive memoryless noise is uniformly ergodic.
Furthermore, the unique invariant measure of the perturbation is spatially mixing and admits a perfect sampling algorithm.
\end{theorem}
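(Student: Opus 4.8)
The plan is to verify the hypothesis $p_t(\Phi)\to 1$ of Proposition~\ref{prop:coupling} by a coupling-from-the-past argument, which then yields in one stroke the uniform ergodicity together with the spatial mixing and the perfect sampling algorithm. Writing the memoryless noise as $\theta(a,b)=(1-\varepsilon)\delta_a(b)+\varepsilon q(b)$, I would choose an update function $\u$ that, at each space-time site, first flips an $\varepsilon$-coin: with probability $1-\varepsilon$ it returns the deterministic value $f(a_1,\dots,a_m)$, and with probability $\varepsilon$ it discards the neighbourhood and outputs a fresh symbol drawn from $q$. Because the noise is memoryless, the resulting labels are independent across all sites of $\ZZ^d\times\NN^-$: call a site a \emph{source} if it is refreshed to $\alpha$ (probability $\varepsilon q(\alpha)>0$), a \emph{blocker} if it is refreshed to some other symbol (probability $\varepsilon(1-q(\alpha))$), and a \emph{conductor} otherwise (probability $1-\varepsilon$). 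The value at a conductor equals $f$ of its neighbours, whereas the value at a source or blocker is fixed regardless of the past.

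Next I would introduce the backward \emph{conductor cluster} $\Gamma$ of the origin: the set of space-time sites reached from $(0,0)$ by following backward-neighbour edges through conductor sites, the exploration being stopped as soon as it meets a source or a blocker. By construction each conductor added to $\Gamma$ has all of its backward-neighbours in $\Gamma$, and every site of $\Gamma$ is joined to $(0,0)$ through a chain of conductor sites. The argument then splits on whether $\Gamma$ is finite or infinite. If $\Gamma$ is finite, all its boundary sites are sources or blockers whose values are pinned by the noise alone, and propagating these fixed values forward through the conductors of $\Gamma$ determines $X^0_0$ independently of the configuration at time $-t$ for every $t$ exceeding the depth of $\Gamma$. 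If instead $\Gamma$ borders a source $z$, which is a backward-neighbour of some conductor $w\in\Gamma$, then the spreading property $f(a_1,\dots,a_m)=\alpha$ whenever some $a_i=\alpha$ forces $w$ to take the value $\alpha$, and this forced $\alpha$ propagates along the conductor chain from $w$ to $(0,0)$, determining $X^0_0=\alpha$ irrespective of the deep past. In either case the map $x\mapsto\Psi^t(x;\cdot)_0$ is eventually constant.

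To conclude I would show that almost surely one of the two cases resolves the origin at a finite time, so that, since determination within $t$ steps is monotone in $t$, we get $p_t(\Phi)\to 1$. The infinite case is the step I expect to be the main obstacle. I would handle it by conditioning on the conductor configuration: this fixes $\Gamma$ and its boundary, and, by the memoryless property, each boundary site (being a non-conductor) is independently a source with probability $q(\alpha)>0$. Since non-conductors occur at positive density $\varepsilon$ throughout $\ZZ^d\times\NN^-$, an infinite $\Gamma$ necessarily has infinite boundary, which therefore contains a source almost surely; that source lies at finite depth and is thus captured for all large $t$. A comparison of the conductor sites with supercritical oriented site percolation (open with probability $1-\varepsilon$) makes the dichotomy between a terminating exploration and an unbounded cluster precise. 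Once $p_t(\Phi)\to 1$ is established, Proposition~\ref{prop:coupling} delivers the uniform ergodicity, the spatial mixing of the unique invariant measure, and the perfect sampling algorithm.
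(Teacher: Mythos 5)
Your proposal is correct and follows essentially the same route as the paper's proof: the same two-stage update function (refresh with probability $\varepsilon$, else apply $f$), the same backward exploration of the cluster of non-refreshed sites rooted at the origin, the same finite/infinite dichotomy, and the same use of the spreading property to force $X_0^0=\alpha$ from an almost-surely-existing boundary refresh to $\alpha$ in the infinite case, all feeding into Proposition~\ref{prop:coupling}. The only superfluous element is the appeal to supercritical oriented site percolation, which is not needed (the finite/infinite dichotomy is automatic, and the theorem holds for every $\varepsilon>0$ with $q(\alpha)>0$).
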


The proof we propose below has the same flavour as the one of Theorem~\ref{thm-nilp} for nilpotent CA, and uses the idea of coupling from the past. Observe however that unlike for nilpotent CA, in some sense, the errors that are introduced here by the random noise favour ergodicity.

\begin{proof} Let $\Phi$ be a perturbation of $F$ by a memoryless noise, defined by the matrix $\theta(a,b)=(1-\varepsilon)\delta_a(b)+\varepsilon q(b)$, where $\varepsilon>0$ and $q(\alpha)>0$. %We prove that we can couple all the trajectories of $\Phi$.

Let $\qu:[0,1]\to S$ be a function with the property that if $U$ is a random variable uniformly distributed over $[0,1]$, then $\xPr(\qu(U)=b)=q(b)$.
We use an update function $\u:S^m\times[0,1]\to S$ for $\Phi$ defined by
\begin{align}
	\u(a_1,\ldots,a_m;u) &=
		\begin{cases}
			\qu(u/\varepsilon)		& \text{if $u\leq\varepsilon$,} \\
			f(a_1,\ldots,a_m)		& \text{otherwise,}
		\end{cases}
\end{align}
where $f$ denotes the local rule of $F$.
Observe that if $U$ is a random variable uniformly distributed over $[0,1]$, then $\xPr\big(\u(a_1,\ldots,a_m;U)=b\;\big|\,U\leq\varepsilon\big)=q(b)$ and $\xPr\big(\u(a_1,\ldots,a_m;U)=f(a_1,\ldots,a_m)\;\big|\,U>\varepsilon\big)=1$.
%$\u(a_1,\ldots,a_m;U)=f(a_1,\ldots,a_m)$ whenever $U>\varepsilon$.

As in the proof of Theorem~\ref{thm-nilp},
we let $U=(U_k^n)_{k\in\ZZ^d,n\in\NN^-}$ be a collection of independent random samples uniformly drawn from $[0,1]$.  We simulate $\Phi$ from the past using the update function $\u$ and the samples $U$.
%Let $(X^{-t})_{0\leq t\leq T}$ denote the resulting trajectory if we start the simulation at time $-T$.
We prove that almost surely, there exists a time $T>0$ such that the trajectories from all possible starting configurations at time $-T$ provide the same value $X^0_0$ for site $0$ at time $0$. It follows that $p_t(\Phi)\to 1$ as $t\to\infty$, and the uniform ergodicity of $\Phi$ and the spatial mixing of its invariant measure follow from Proposition~\ref{prop:coupling}.

%We consider a configuration $(x_{k}^{0})_{k\in \Z^d}$ obtained at time $t=0$, after iterating $\Phi$ from a given time in the past, using a sequence $(u_{k}^{-t})_{k\in \Z^d, t\geq 0}$ of independent samples, uniformly distributed in $(0,1)$, and an update function having the following property: if $u_{k}^{-t}>\varepsilon$, then cell $k$ is updated according to the local rule of the deterministic CA $F$, while if $u_{k}^{-t}\leq \varepsilon$, then the value is chosen according to $q$, and does not depend on the state of the neighbourhood. We prove that almost surely, there exists a time $T>0$ such that the evolutions from all the possible starting configurations at time $-T$ provide the same value $x_{0}^{0}$ at time $0$.

We say that an \emph{error} has occurred at space-time position $(k,-t)$ if $U_{k}^{-t}\leq\varepsilon$. By construction, we know that if there is an error at position $(k,-t)$, then the value $X_{k}^{-t}$ does not depend on the past: it is only a function of $U_{k}^{-t}$.

For $k\in \Z^d$ and $t\geq 0$, let us define the set
%$$E(k,-t)=\begin{cases}
%\emptyset \mbox{ if there is no error in } (k,-t)+W, \\
%(k,-t)+(\Neighb^N,-N) \mbox{ if there is at least one error in } (k,-t)+W.
%\end{cases}
%$$
\begin{align}
	E(k,-t) &=
		\begin{cases}
			\{(k+m,-t-1): m\in\Neighb\}		& \text{if there is no error at position $(k,-t)$,} \\
			\varnothing						& \text{otherwise.}
%(k+\Neighb,-t-1) \mbox{ if } \mbox{ there is no error at position } (k,-t), \\
%\emptyset \mbox{ otherwise.}
		\end{cases}
\end{align}
We recursively define sets $A_0,A_1,\ldots$ by setting $A_0\isdef\{(0,0)\}$ and
\begin{align}
	A_{i+1} &\isdef E(A_i)=\bigcup_{(k,-t)\in A_i} E(k,-t)
\end{align}
for $i\geq 0$.
The set $A=\bigcup_{i\geq 0} A_i$ can be seen as an oriented tree, that is, a directed acyclic graph with edges from each $(k,-t)\in A$ to the points of $E(k,-t)$. Observe that a point $(k,-t)\in A$ is a leaf of the tree if and only if there is an error at position $(k,-t)$. %, that is if $E(k,-t)=\varnothing$. 

Now, let us distinguish two cases (see Fig.~\ref{fig:spreading}):
\begin{enumerate}[label={\rm (\Roman*)}]
	\item The tree $A$ is finite. In this case, there exists an integer $T\geq 0$ such that $A_T=\varnothing$ (hence $A_i=\varnothing$ for all $i\geq T$), and the value $X_{0}^0$ is only a function of the finite family of samples $U_{k}^{-t}$ with $k \in \Neighb^t$ and $0\leq t\leq T-1$.
	\item The tree $A$ is infinite. In this case, almost surely the tree contains an infinite number of leaves. Indeed, each point $(k,-t)$ is an error with probability $\varepsilon$, independently for different points. Furthermore, conditioned on the event that $(k,-t)$ is a leaf, the symbol $X_{k}^{-t}$ takes value $\alpha$ with probability $q(\alpha)>0$, independently for different leaves. Thus, almost surely, the tree $A$ contains at least one leave labeled by the symbol $\alpha$, at some time $-T$. Using the fact that $\alpha$ is a spreading symbol, we can then trace the tree up to time $0$ to find that $X_0^0=\alpha$.
\end{enumerate}

\begin{figure}[h]
\begin{center}
\begin{tikzpicture}[scale=1.5]
\begin{scope}
\foreach \y in {0,1,2,3} \draw[ultra thin] (-1.5,-0.5*\y) --(1.5,-0.5*\y) ;
\foreach \x in {0} \node[left,overlay] at (-2,-0.5*\x) {$t=\x$} ;
\foreach \x in {1,2,3} \node[left,overlay] at (-2,-0.5*\x) {$t=-\x$} ;
\node[left] at (-2,-2) {$\vdots$} ;
\node[above] at (0,0) {$X_0^0$} ;
\foreach \x in {0} {\draw[fill] (\x,0) circle [radius=0.05]  ; \draw[dashed] (\x,0)--(\x,-0.5) ; \draw[dashed] (\x,0)--(\x-0.5,-0.5) ; \draw[dashed] (\x,0)--(\x+0.5,-0.5) ; }
\foreach \x in {0,0.5} {\draw[fill] (\x,-0.5) circle [radius=0.02]  ; \draw[dashed] (\x,-0.5)--(\x,-1) ; \draw[dashed] (\x,-0.5)--(\x-0.5,-1) ; \draw[dashed] (\x,-0.5)--(\x+0.5,-1) ; }
\foreach \x in {-0.5} \draw[fill, red] (\x,-0.5) circle [radius=0.05]  ;
\foreach \x in {0,0.5} {\draw[fill] (\x,-1) circle [radius=0.02]  ; \draw[dashed] (\x,-1)--(\x,-1.5) ; \draw[dashed] (\x,-1)--(\x-0.5,-1.5) ; \draw[dashed] (\x,-1)--(\x+0.5,-1.5) ; }
\foreach \x in {-0.5,1} \draw[fill, red] (\x,-1) circle [radius=0.05]  ;
\foreach \x in {0,0.5,1} \draw[fill, red] (\x,-1.5) circle [radius=0.05]  ;
\foreach \x in {-0.5,0,0.5,1} \draw[fill, red] (\x,-1.5) circle [radius=0.05]  ;
\end{scope}
\begin{scope}[xshift=4cm]
\foreach \y in {0,1,2,3} \draw[ultra thin] (-1.5,-0.5*\y) --(1.5,-0.5*\y) ;
%\foreach \x in {0,1,2,3} \node at (-2.5,-0.5*\x) {$t=-\x$} ;
\node[above] at (0.2,0) {$X_0^0{\color{red} =\alpha}$} ;
\foreach \x in {0} {\draw[fill] (\x,0) circle [radius=0.05]  ; \draw[dashed] (\x,0)--(\x,-0.5) ; \draw[dashed] (\x,0)--(\x-0.5,-0.5) ; \draw[dashed] (\x,0)--(\x+0.5,-0.5) ; }
\foreach \x in {0,0.5} {\draw[fill] (\x,-0.5) circle [radius=0.02]  ; \draw[dashed] (\x,-0.5)--(\x,-1) ; \draw[dashed] (\x,-0.5)--(\x-0.5,-1) ; \draw[dashed] (\x,-0.5)--(\x+0.5,-1) ; }
\foreach \x in {-0.5} \draw[fill, red] (\x,-0.5) circle [radius=0.05]  ;
\foreach \x in {0,0.5} {\draw[fill] (\x,-1) circle [radius=0.02]  ; \draw[dashed] (\x,-1)--(\x,-1.5) ; \draw[dashed] (\x,-1)--(\x-0.5,-1.5) ; \draw[dashed] (\x,-1)--(\x+0.5,-1.5) ; }
\foreach \x in {-0.5,1} \draw[fill, red] (\x,-1) circle [radius=0.05]  ;
\foreach \x in {-0.5,1} {\draw[fill] (\x,-1.5) circle [radius=0.02]  ; \draw[dashed] (\x,-1.5)--(\x,-2) ; \draw[dashed] (\x,-1.5)--(\x-0.5,-2) ; \draw[dashed] (\x,-1.5)--(\x+0.5,-2) ; }
\foreach \x in {0,0.5} \draw[fill, red] (\x,-1.5) circle [radius=0.05]  ;
\foreach \x in {0.5} \draw[red] (\x,-1.5) circle [radius=0.1]  ;
\node[below] at (0.5,-1.6) {${\color{red} \alpha}$} ;
\end{scope}
\end{tikzpicture}
\end{center}
\caption{Illustration of the proof of Theorem~\ref{thm-spreading-zrmn}. Errors are represented by red dots. In the first case, the tree is finite, and $X_0^0$ is a function of the values given by the memoryless noise $q$ at the errors. In the second case, the tree is infinite: then it contains an infinite number of leaves, and there is almost surely one leaf having value $\alpha$, so that $X_0^0=\alpha$.}
\label{fig:spreading}
\end{figure}
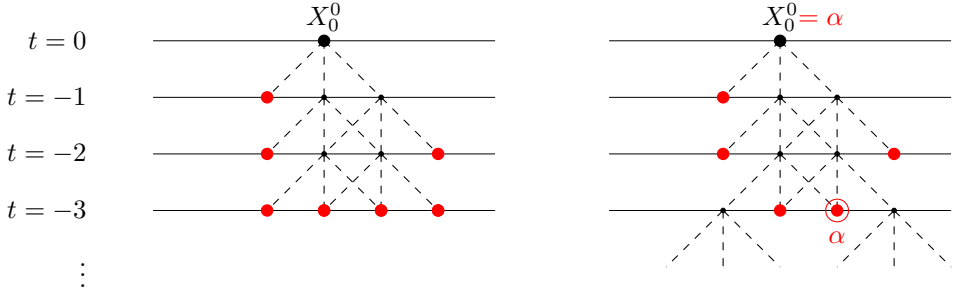

%Thus, in both cases, we can couple all the trajectories of $\Phi$: the state at time $0$ can almost surely be expressed as a function of a finite number of samples. This means that $\Phi$ is ergodic.
In both cases, the value $X_0^0$ is almost surely uniquely determined by a finite number of samples in the family~$U$.  In particular, almost surely there is a time $T>0$ such that if we simulate the PCA from time $-T$ using the samples in $U$, all possible choices of the configuration $X^{-T}$ lead to the same value $X_0^0$ for site $0$ at time $0$.
\end{proof}

\subsubsection{Positive perturbation}
\label{sec:spreading:perturbation}

In this section, we consider $\varepsilon$-perturbations of a CA $F$ with spreading symbol $\alpha$.
Recall that an $\varepsilon$-perturbation of a CA with local rule $f$ is a PCA whose local rule $\varphi$ satisfies $\varphi(a_1,\dots,a_m)(f(a_1,\dots,a_m))\allowbreak\geq 1-\varepsilon$ for all $a_1,\dots,a_m\in S$. We consider perturbations that are \emph{$\alpha$-positive}, meaning that $\varphi(a_1,\ldots,a_m)(\alpha)\allowbreak>0$ for all $a_1,\ldots,a_m\in S$.

\begin{theorem}\label{thm-spreading} Let $F$ be a one-dimensional CA with neighbourhood $\Neighb=\{0,1\}$ and spreading state~$\alpha$. There exists an $\varepsilon_\critical>0$ such that for $\varepsilon<\varepsilon_\critical$, every $\alpha$-positive $\varepsilon$-perturbation of $F$ is uniformly ergodic, with an invariant measure that is spatially mixing and admits a perfect sampling algorithm.
\end{theorem}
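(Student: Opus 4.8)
The plan is to prove the stronger statement that $p_t(\Phi)\to 1$ and then invoke Proposition~\ref{prop:coupling}, which already bundles uniform ergodicity together with spatial mixing and the perfect sampling algorithm. As in the proof of Theorem~\ref{thm-spreading-zrmn}, I would simulate $\Phi$ from the past using a single family $U=(U^n_k)_{k\in\ZZ,n\in\NN^-}$ of independent uniform samples together with a carefully chosen update function $\u$, and show that almost surely the value $X^0_0$ produced at the origin eventually stops depending on the configuration fed in at time $-T$.

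First I would design $\u$ so that $[0,1]$ splits into three \emph{pattern-independent} pieces. Setting $\delta\isdef\min_{a_1,a_2}\varphi(a_1,a_2)(\alpha)$, which is strictly positive precisely because the perturbation is $\alpha$-positive, I put $\u(a_1,a_2;u)=\alpha$ for all patterns when $u<\delta$ (a \emph{source}); I put $\u(a_1,a_2;u)=f(a_1,a_2)$ on a top piece of length at least $1-C\varepsilon$ (an \emph{$F$-site}), using that $\Phi$ is an $\varepsilon$-perturbation; and I leave a middle \emph{bad} piece of length at most $C\varepsilon$ on which $\u$ realises the residual mass needed to match $\varphi$. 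A short case analysis (according to whether $f(a_1,a_2)=\alpha$ or not) shows that such a valid $\u$ exists and that the source piece and the $F$-piece can be taken pattern-independent. The point of this is twofold: a source forces $\alpha$ regardless of the past, and on an $F$-site the exact rule $f$ is applied. Because the type of a site $(k,-t)$ then depends only on $U^{-t}_k$, the labels source/$F$/bad are \emph{i.i.d.} over the space-time lattice.

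Next I would exploit the spreading property. Since $\Neighb=\{0,1\}$, an $\alpha$ at $(j,-s)$ is pushed leftward by $f$: along any backward chain $(0,0),(k_1,-1),\dots$ with $k_{i+1}\in\{k_i,k_i+1\}$ whose interior sites are all $F$-sites, an $\alpha$ planted at the bottom propagates up and forces $X^0_0=\alpha$. Thus $X^0_0$ is forced as soon as the origin is joined to some source by a directed path of $F$-sites; call such a site \emph{$\alpha$-forced}. The origin fails to be $\alpha$-forced only if its backward cluster of $F$-sites is sealed off from every source by bad sites. Crucially, where a bad site sits above an already-resolved (in particular $\alpha$-forced) region, its own output is a determined function of $U$; hence a site trapped in a \emph{finite} pocket whose lower boundary is $\alpha$-forced is resolved by finitely many samples even when its value is not $\alpha$ (this is what allows the argument to certify, say, a $\symb{0}$ at the origin). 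So it suffices to show that almost surely the origin is either $\alpha$-forced or lies in such a finite pocket.

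The heart of the matter is therefore a percolation estimate on the bad sites, and this is the step I expect to be the main obstacle. The bad sites are i.i.d.\ with probability at most $C\varepsilon$, so for $\varepsilon<\varepsilon_\critical$ (with $\varepsilon_\critical$ governed by the directed-percolation threshold of $\Neighb$) they are subcritical. I would dominate the event ``$\alpha$ cannot reach the origin'' by the existence of a directed barrier of bad sites separating the origin from the infinite past, and then appeal to the planar self-duality of oriented bond percolation in $1+1$ dimensions: a directed open connection to $-\infty$ is obstructed exactly by a dual directed path of the complementary type, and in the subcritical regime such dual paths are almost surely finite (with exponentially decaying length). This is exactly where one-dimensionality is indispensable, the duality having no higher-dimensional analogue. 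It follows that the bad sites form no infinite barrier, so almost surely every region not flooded by $\alpha$ is finite and bounded below by $\alpha$-forced sites; the origin is thus resolved after finitely many steps, giving $p_t(\Phi)\to1$, and the remaining conclusions follow from Proposition~\ref{prop:coupling}. The genuinely delicate points are making this comparison rigorous — translating ``not $\alpha$-forced'' into a dual bad-barrier and quoting the correct subcritical decay result — and checking that $\varepsilon_\critical$ depends only on $\Neighb$, since only the positivity of $\delta$, not its magnitude, is used to seed the sources.
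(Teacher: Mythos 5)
Your overall skeleton is the same as the paper's: the same three-way update function (sources forcing $\alpha$, $F$-sites applying the exact rule, a residual ``bad'' piece of mass $O(\varepsilon)$), coupling from the past, propagation of $\alpha$ leftward along directed chains of $F$-sites, and the observation that a site enclosed by $\alpha$-forced sites is determined by finitely many samples, all feeding into Proposition~\ref{prop:coupling}. The divergence, and the gap, is exactly at the step you yourself flag as the main obstacle. First, the tool you invoke does not exist in the form you state it: oriented bond percolation in $1+1$ dimensions is \emph{not} self-dual (its critical value is $\approx 0.6447$, not $\nicefrac{1}{2}$), and the object dual to a blocked directed connection is a contour in a dual lattice that is allowed non-directed moves, not ``a dual directed path of the complementary type.'' Second, and more seriously, the reduction of ``the origin is resolved'' to ``no infinite directed barrier of bad sites'' is not justified. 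The region you must show is finite is the backward cluster of the origin through sites that are \emph{not $\alpha$-forced}, and a site can fail to be $\alpha$-forced without being bad (an $F$-site whose entire backward $F$-cluster is finite and sourceless). These failure events are strongly positively correlated along directed paths (all sites inside one finite sourceless $F$-cluster fail together), their per-site probability is only $1-O(\varepsilon)$ away from $1$... rather, only $O(\varepsilon)$ each but with no independence to exploit, and a union bound over the $2^n$ directed paths of length $n$ does not close unless the source density $\delta$ is unrealistically large. Since $\delta$ can be arbitrarily small relative to $\varepsilon$ (only $\delta>0$ is assumed), no Peierls-type count over bad sites alone can control the unresolved region.

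What the paper supplies at this point, and what your argument is missing, is a quantitative control of the \emph{geometry of the $\alpha$-forced region}: by Durrett's edge-speed theorem for supercritical oriented percolation, on the event that a source's forward cluster of $F$-sites is infinite, the \emph{cone} between its leftmost and rightmost descendants grows linearly with speeds $\overline{L}<-\nicefrac{1}{2}<\overline{R}$ once $2\varepsilon<(1-p_\critical)^2$, and everything strictly inside that cone (not only the $\alpha$-forced sites, but every site, since the cone's walls are $\alpha$ and dependence only looks rightward) is determined by the noise. Ergodicity of the i.i.d.\ field along the direction $(-1,2)$ then guarantees that almost surely some source at $(k,-2k)$ has $(0,0)$ in its cone. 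To repair your proof you would either need to import this edge-speed input (at which point you have reproduced the paper's argument) or find a genuinely different way to show that the non-$\alpha$-forced region seen from the origin is almost surely finite; the duality claim as stated will not do it.
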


%For this result, it is easier to present the argument as a forward coupling rather than a coupling from the past.

\begin{proof}
	Let $\Phi$ be an $\alpha$-positive $\varepsilon$-perturbation of $F$.
	The local rule of $\Phi$ can be written as
	\begin{align}
		\varphi(a_0,a_1)(b) &= (1-2\varepsilon)\delta_{f(a_0,a_1)}(b) + 2\varepsilon\tilde{\varphi}(a_0,a_1)(b)
	\end{align}
	where $f$ is the local rule of $F$ and $\tilde{\varphi}$ is another local rule.
	We have used $2\varepsilon$ instead of $\varepsilon$ to make sure that $\tilde{\varphi}$ is also $\alpha$-positive.  Let $\delta\isdef 2\varepsilon\cdot\min\{\tilde{\varphi}(a_0,a_1)(\alpha): a_0,a_1\in S\}$ and note that $\delta>0$.  Let $\u:S^2\times[0,1]\to S$ be an update function for $\varphi$ with the property that $\u(a_0,a_1;u)=f(a_0,a_1)$ when $u>2\varepsilon$ and $\u(a_0,a_1;u)=\alpha$ when $u\leq\delta$. 
	
	Let $U\isdef(U_i^n)_{i\in\ZZ,n\in\ZZ}$ be a collection of independent uniform samples from $[0,1]$.
	We use the update function $\u$ and the collection $U$ to simulate $\Phi$ from a time far in the past.  Let $X^t$ denote the configuration at time $t$.

	Since $\alpha$ is a spreading symbol for $F$ and at each space-time point the local rule is applied with probability at least $1-2\varepsilon$, the spread of $\alpha$ dominates an oriented site percolation with parameter $1-2\varepsilon$.  More specifically, consider the ``space-time'' graph with vertex set $\ZZ\times\ZZ$ and oriented edges $(i,n-1)\to(i,n)$ and $(i+1,n-1)\to(i,n)$ for all $i,n\in\ZZ$.  Declare a point $(i,n)$ \emph{open} if $U_i^n>2\varepsilon$ and \emph{closed} otherwise.  The \emph{open cluster} of the point $(0,0)$ is the set $C$ of all points $(i,n)$ that can be reached from $(0,0)$ by an oriented open path.  Clearly, if $X_0^0=\alpha$ (in particular, if $U_0^0<\delta$), then for every point $(i,n)$ in the open cluster of $(0,0)$, we necessarily have $X_i^n=\alpha$.  But even more is true.  Let $Q^n\isdef\{i: (i,n)\in C\}$ be the set of descendants of $(0,0)$ at time $n$ and denote by $L^n\isdef\inf Q^n$ and $R^n\isdef\sup Q^n$ the leftmost and rightmost elements of $Q^n$ (with the convention $\inf\varnothing\isdef+\infty$ and $\sup\varnothing\isdef-\infty$).  Observe that if $X_0^0=\alpha$, then for every $n>0$ and $i$ with $L^n\leq i\leq R^n$, the value $X_i^n$ is uniquely determined by the samples $U_j^m$ with $0<m\leq n$ and $-m\leq j\leq 0$.
	Let us call the set $\overline{C}\isdef\{(i,n): \text{$n>0$ and $L^n\leq i\leq R^n$}\}$ the \emph{cone} of $(0,0)$.  The cone of a point $(k,t)$ is defined in a similar fashion and is denoted by $\overline{C}(k,t)$.
	
	In order to prove ergodicity, we claim that when $\varepsilon$ is small enough (in particular, when $2\varepsilon<(1-p_\critical)^2$, where $p_\critical$ is the critical value for oriented \emph{bond} percolation on $\ZZ\times\ZZ$), the point $(0,0)$ is almost surely in the cone of a point $(k,-t)$ with $U_k^{-t}<\delta$ (see Fig.~\ref{fig:spreading2}).  This implies that $p_t(\Phi)\to 1$ as $t\to\infty$, and the uniform ergodicity of $\Phi$ and the spatial mixing of its invariant measure follow from Proposition~\ref{prop:coupling}.
	
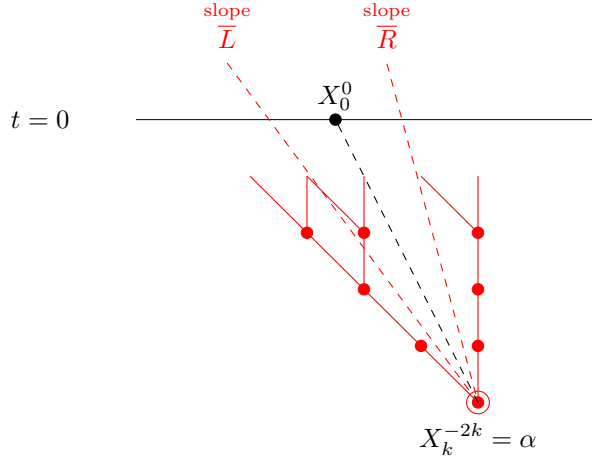
\begin{figure}[h]
	\begin{center}
	\begin{tikzpicture}[scale=1.5]
	\draw[ultra thin] (-5,3) --(-1,3) ;
	\node[left,overlay] at (-5.5,3) {$t=0$} ;
	\foreach \x in {-2} {\draw[fill, red] (\x,0.5) circle [radius=0.05]  ; \draw[red] (\x-0.5,1)--(\x,0.5)--(\x,1);}
	\foreach \x in {-2.5} {\draw[fill, red] (\x,1) circle [radius=0.05]  ; \draw[red] (\x-0.5,1.5)--(\x,1);}
	\foreach \x in {-2} {\draw[fill, red] (\x,1) circle [radius=0.05]  ; \draw[red] (\x,1)--(\x,1.5);}
	\foreach \x in {-3} {\draw[fill, red] (\x,1.5) circle [radius=0.05]  ; \draw[red] (\x-0.5,2)--(\x,1.5)--(\x,2);}
	\foreach \x in {-2} {\draw[fill, red] (\x,1.5) circle [radius=0.05]  ; \draw[red] (\x,1.5)--(\x,2);}
	\foreach \x in {-3.5,-3,-2} {\draw[fill, red] (\x,2) circle [radius=0.05]  ; \draw[red] (\x-0.5,2.5)--(\x,2)--(\x,2.5);}
	\foreach \x in {-2} {\draw[red] (\x,0.5) circle [radius=0.1]  ; \draw[red, dashed] (\x-2.2,3.5)--(\x,0.5)--(\x-0.8,3.5); \draw[dashed] (\x,0.5)--(\x-1.25,3);}

	\node[above] at (-2-2.2,3.55) {{\color{red}$\overline{L}$}} ;	
	\node[above] at (-2-2.2,3.8) {{\color{red}{\scriptsize slope}}} ;	

	\node[above] at (-2-0.8,3.55) {{\color{red}$\overline{R}$}} ;	
	\node[above] at (-2-0.8,3.8) {{\color{red}{\scriptsize slope}}} ;	

%	\foreach \x in {2} {\draw[fill, red] (\x,0.5) circle [radius=0.05]  ; \draw[red] (\x-0.5,1)--(\x,0.5)--(\x,1);}
%	\foreach \x in {1.5,2} {\draw[fill, red] (\x,1) circle [radius=0.05]  ; \draw[red] (\x-0.5,1.5)--(\x,1)--(\x,1.5);}
%	\foreach \x in {1,1.5} {\draw[fill, red] (\x,1.5) circle [radius=0.05]  ; \draw[red] (\x-0.5,2)--(\x,1.5)--(\x,2);}
%	\foreach \x in {2} {\draw[fill, red] (\x,1.5) circle [radius=0.05]  ; \draw[red] (\x-0.5,2)--(\x,1.5);}
%	\foreach \x in {0.5,1,1.5} {\draw[fill, red] (\x,2) circle [radius=0.05]  ; \draw[red] (\x-0.5,2.5)--(\x,2)--(\x,2.5);}
%	\foreach \x in {2} {\draw[red] (\x,0.5) circle [radius=0.1]  ; \draw[dashed] (\x-2,3)--(\x,0.5)--(\x-1,3);}
%
%	\foreach \x in {0} {\draw[fill, blue] (\x,1) circle [radius=0.05]  ; \draw[blue] (\x-0.5,1.5)--(\x,1);}
%	\foreach \x in {-0.5} {\draw[fill, blue] (\x,1.5) circle [radius=0.05]  ; \draw[blue] (\x-0.5,2)--(\x,1.5)--(\x,2);}
%	\foreach \x in {-1,-0.5} {\draw[fill, blue] (\x,2) circle [radius=0.05]  ; }
	
	\node[below] at (-2,0.4) {$X_{k}^{-2k}=\alpha$} ;	
	\node[above] at (-2-1.25,3) {$X_0^0$} ;
	
	\foreach \x in {-2} {\draw[fill] (\x-1.25,3) circle [radius=0.05]  ;}

	%\foreach \x in {-4,-3.5,-3,1.5,0,0.5,1,1.5} {\draw[fill, red] (\x,2.5) circle [radius=0.05]  ; \draw[red] (\x-0.5,3)--(\x,2.5)--(\x,3);}
	\end{tikzpicture}
	\end{center}
	\caption{Illustration of the proof of Theorem.~\ref{thm-spreading}. With probability $1$, the point $(0,0)$ belongs to the cone of a point $(k,-2k)$.%At time $t=1$, we know that the set of positions with symbol $\alpha$ dominates a product measure of parameter $\delta$ (red dots). Their offspring then dominates an oriented site percolation process. If $1-\varepsilon$ is larger than the percolation threshold, some of these symbols will give rise to an infinite cluster. Between the left and the right boundary of each cluster, cells of $X$ and $Y$ have the same state, and the union of the cone covers a density of cells that tends to $1$ when the time goes to infinity.
	}
	\label{fig:spreading2}
\end{figure}
	
	To prove the latter claim, we invoke a result of Durrett~\cite[Sec.~3]{Dur84} on oriented bond percolation.  In the oriented bond percolation, each edge of the above-mentioned space-time graph is declared \emph{open} with probability $p$, independently of the other edges.  Observe that when $p=1-\sqrt{2\varepsilon}$, the oriented bond percolation with parameter $p$ and the oriented site percolation with parameter $1-2\varepsilon$ can be coupled in such a way that a point $(i,n)$ is open if and only if at least one of its two incoming edges are open.
	With such a coupling, the open bond-cluster of $(0,0)$ will be included in the open site-cluster of $(0,0)$. 
	Let $\overline{L}\isdef\limsup_{n\to\infty}L^n/n$ and $\overline{R}\isdef\liminf_{n\to\infty}R^n/n$.  It follows from the result of Durrett that when $p>p_\critical$, on the event that the open bond-cluster of $(0,0)$ is infinite, we almost surely have $\overline{L}<-\nicefrac{1}{2}<\overline{R}$.
	
	As a consequence, when $2\varepsilon<(1-p_\critical)^2$, there exists a value $i_0>0$ such that, with positive probability, every point $(-i,2i)$ with $i\geq i_0$ is in the cone of $(0,0)$.  Observe that the cone of $(0,0)$ is independent of the variable $U_0^0$.  Therefore, with positive probability, $U_0^0<\delta$ and every point $(-i,2i)$ with $i\geq i_0$ is in the cone of $(0,0)$.
	Let $E(k,t)$ denote the event that $U_k^t<\delta$ and every point $(k-i,t+2i)$ with $i\geq i_0$ is in the cone of $(k,t)$.
	Since the process $(U_i^n)_{i\in\ZZ,n\in\ZZ}$ is ergodic with respect to the shift along $(-1,2)$, we find that with probability~$1$, the events $E(k,-2k)$ occur for infinitely many $k>0$.  In particular, almost surely, there exists a point $(k,-2k)$ with $k\geq i_0$ for which $U_k^{-2k}<\delta$ and the cone of $(k,-2k)$ includes $(0,0)$.  This concludes the proof.
\end{proof}

The assumption $\Neighb=\{0,1\}$ is not essential, and the proof can be extended to the case where $\Neighb=\{\ell,\ell+1,\ldots,r\}$ is an interval in $\ZZ$.  Extending the result to more general neighbourhoods would require additional technical details.

\subsection{Interacting gliders with birth-death noise}
\label{sec:gliders}

\subsubsection{Gliders with annihilation}
\label{sec:gliders:annihilating}

%For $v\in\Z^d$, we denote by $\sigma^v:S^{\Z^d}\rightarrow S^{\Z^d}$ the \emph{shift} map, defined by $\sigma^k(x)_k=x_{k-v}$. 

A gliders CA is a deterministic CA describing the movement of particles of different types according to given velocities.  More specifically, a \emph{gliders} CA with $N\geq 1$ particle types and particle velocities $v_1,\ldots,v_N\in\ZZ^d$ is a CA $G$ with alphabet $S\isdef\{\qO,\qX\}^N$ defined by
\begin{align}
	%G(x_1,\ldots,x_N) &\isdef (\sigma^{v_1}x_1,\ldots,\sigma^{v_N}x_N)
	(G x)_{k,i} &\isdef x_{k+v_i,i}
\end{align}
for every $x\in S^{\ZZ^d}$, $k\in\ZZ^d$ and $i\in\{1,\ldots,N\}$.  Here, $x_{k,i}$ denotes the $i$th component of the symbol at site~$k$ in~$x$, and $x_{k,i}=\qX$ indicates the presence of a particle of type $i$ at site $k$.  Thus, $G$ simply shifts the particles of type $1$ with vector $v_1$, the particles of type $2$ with vector $v_2$ and so forth.  The neighbourhood of $G$ is clearly $\Neighb_G\isdef\{v_1,\ldots,v_N\}$.

For $i\in\{1,\ldots,N\}$, let $\e_i\in \{\qO,\qX\}^N$ denote the symbol representing the presence of a particle of type $i$ and absence of all the other types of particles, that is, $(\e_i)_j\isdef\indicator{i}(j)$. 
An \emph{elementary annihilation} rule is a function $h_{i,j}:S\to S$ defined by
\begin{align}
	h_{i,j}(a) &\isdef
		\begin{cases}
			a -\e_i-\e_j		& \text{if $a_i=a_j=\qX$,} \\
			a					& \text{otherwise.}
		\end{cases}
\end{align}
An \emph{annihilation} rule is a composition  $h\isdef h_{i_n,j_n}\oo\cdots\oo h_{i_1,j_1}$ of elementary annihilation rules.  Observe that elementary annihilation rules may not commute.
An annihilation CA is a CA $A$ with neighbourhood $\Neighb_A\isdef\{0\}$ whose local rule is an annihilation rule.  A \emph{gliders with annihilation} is a composition $F\isdef A\oo G$ of a gliders CA $G$ followed by an annihilation CA $A$.  In words, a gliders with annihilation represents the movement of $N$ types of particles where certain pairs of particles annihilate upon encounter at the same position.  Note that, due to the discrete nature of time, particles moving in opposite directions can possibly pass each other without encountering at the same position.

%For each type $i\in \{1,\ldots,N\}$ of particle, let $\beta_i\in(0,1)$ and $\delta_i\in(0,1)$ be real numbers describing respectively the \emph{birth rate} and \emph{death rate} of particle $i$. We perturbate independently the different components of a configuration $x$, particles of type $i$ being affected by the noise defined by the matrix:
%$$R^i=\begin{pmatrix}
%1-\beta_i & \beta_i\\
%\delta_i & 1-\delta_j
%\end{pmatrix}.$$
%On the whole alphabet $S=\{\qO,\qX\}^N$, the \emph{birth-death noise} $R$ is thus defined by:
%$$\forall a,b\in \{\qO,\qX\}^N, R_{a,b}=\prod_{i=1}^N R^i(a_i,b_i).$$

Recall that a birth-death noise on $S=\{\qO,\qX\}^N$ is a zero-range noise under which particles of different type appear and disappear independently from one another.  The matrix of a birth-death noise can therefore be written as
\begin{align}
	\theta(a,b) &\isdef \prod_{i=1}^n \theta_i(a_i,b_i) \;.
\end{align}
Each matrix $\theta_i$ has the form
\begin{align}
	\theta_i &=
		\begin{pmatrix}
			1-\beta_i	& \beta_i \\
			\delta_i	& 1-\delta_j
		\end{pmatrix} \;,
\end{align}
where $\beta_i\in[0,1]$ and $\delta_i\in[0,1]$ respectively represent the \emph{birth rate} and \emph{death rate} of particles of type~$i$.
A birth-death noise is positive if $\beta_i,\delta_i\in(0,1)$ for each $i\in\{1,\ldots,N\}$.

\begin{figure}[h]
\begin{center}
\begin{tikzpicture}[scale=0.75]
\begin{scope}
\foreach \y in {0} \draw[ultra thin] (-4,-0.5*\y) --(4,-0.5*\y) ;
\draw[red, thick] (-3.5,0)--(4,3.75) ;
\draw[red, thick] (-3,0)--(-2,0.5) ;
\draw[blue,decorate,decoration={zigzag,amplitude=0.5pt,segment length=3pt},thick] (-1.5,0)--(-2,0.5) ;
\draw[green, decorate,decoration={snake,amplitude=0.5pt,segment length=7pt}, thick] (-1,0)--(4,5) ;
\draw[green, decorate,decoration={snake,amplitude=0.5pt,segment length=7pt}, thick] (1,0)--(2,1) ;
\draw[blue,decorate,decoration={zigzag,amplitude=0.5pt,segment length=3pt},thick] (3,0)--(2,1) ;
\node[left,overlay] at (-4.5,0) {$t=0$} ;
\end{scope}
\begin{scope}[xshift=9cm]
\foreach \y in {0} \draw[ultra thin] (-4,-0.5*\y) --(4,-0.5*\y) ;
\draw[red, thick] (-3,0.25)--(4,3.75) ;
\draw[fill, red] (-3,0.25) circle [radius=0.05]  ; 
\draw[red, thick] (-3,0)--(-2,0.5) ;
\draw[blue,decorate,decoration={zigzag,amplitude=0.5pt,segment length=3pt},thick] (-1.5,0)--(-2,0.5) ;
\draw[green,decorate,decoration={snake,amplitude=0.5pt,segment length=7pt}, thick] (-1,0)--(2.5,3.5) ;
\draw[fill, green] (2.5,3.5) circle [radius=0.05]  ; 
\draw[green, decorate,decoration={snake,amplitude=0.5pt,segment length=7pt}, thick] (-3,2)--(-2,3) ;
\draw[green, decorate,decoration={snake,amplitude=0.5pt,segment length=7pt}, thick] (1,0)--(2,1) ;
\draw[fill, green] (-3,2) circle [radius=0.05]  ; 
\draw[fill, green] (-2,3) circle [radius=0.05]  ; 
\draw[blue,decorate,decoration={zigzag,amplitude=0.5pt,segment length=3pt},thick] (2.5,0.5)--(2,1) ;
\draw[fill, blue] (2.5,0.5) circle [radius=0.05]  ; 
\draw[blue,decorate,decoration={zigzag,amplitude=0.5pt,segment length=3pt},thick] (-0.5,2.5)--(-4,6) ;
\draw[fill, blue] (-0.5,2.5) circle [radius=0.05]  ; 
\end{scope}
\end{tikzpicture}
\end{center}
\caption{Examples of space-time diagrams of a gliders with annihilation (left) and of a gliders with annihilation subject to noise (right). In these examples, there are three types of particles: blue (zigzag) particles have speed $-1$, green (wavy) particles have speed $1$, and red ones have speeds $2$. Red and green particle do not interact, while red and blue particles annihilate upon meeting, and so do green and blue.}
\label{fig:glidersexa}
\end{figure}
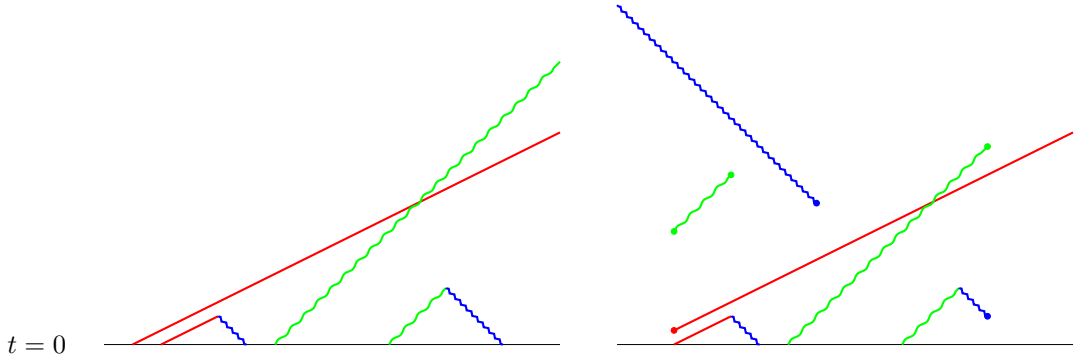

\begin{theorem}\label{thm-glider_an}
	Let $F\isdef A\oo G$ be a gliders with annihilation, and let $\theta$ be a positive birth-death noise. The noisy version of $F$ with noise $\theta$ is uniformly ergodic, with a spatially mixing invariant measure.
\end{theorem}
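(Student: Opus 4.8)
The plan is to verify the forward-coupling criterion of Proposition~\ref{prop:coupling:forward}. Given two configurations $x,y\in\X$, I run the two trajectories $(X^t)$ and $(Y^t)$ of the noisy version of $F$ simultaneously from $X^0=x$ and $Y^0=y$, driven by a single field $U=(U^t_{k,i})$ of independent uniform samples, one per site $k$, layer $i\in\{1,\ldots,N\}$ and time $t$, using an update function in which the deterministic rule $F=A\oo G$ is applied first and the birth--death noise (acting independently on each layer) is applied last. Writing $\eta^t_{k,i}\isdef\indicator{X^t_{k,i}\neq Y^t_{k,i}}$ for the \emph{defect} indicator, it suffices to produce a constant $\gamma>0$, depending only on $\theta$, with $\P(\eta^t_{0,i}=1)\leq(1-\gamma)^t$ for every layer $i$; summing over the $N$ layers then gives $\P(X^t_0\neq Y^t_0)\leq N(1-\gamma)^t\to 0$ uniformly in $x,y$, whence Proposition~\ref{prop:coupling:forward} yields uniform ergodicity and, via Proposition~\ref{prop:spatial-mixing}, the spatial mixing of the unique invariant measure.

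Two structural facts drive the coupling. First, because $\theta$ is a \emph{positive} birth--death noise, for each layer $i$ both $\min\{\beta_i,1-\delta_i\}$ and $\min\{\delta_i,1-\beta_i\}$ are strictly positive. Hence the update function can be chosen so that, independently at each space--time point and each layer, with probability at least $\gamma\isdef\min_i\big(\min\{\beta_i,1-\delta_i\}+\min\{\delta_i,1-\beta_i\}\big)>0$ the output is \emph{forced} to a fixed symbol ($\qX$ or $\qO$) regardless of the pre-noise value; I call such an event a \emph{reset}. A reset at $(k,i,t)$ installs the same symbol in both copies, so $\eta^t_{k,i}=0$ there; in particular $\eta^t_{k,i}=1$ forces the absence of a reset at $(k,i,t)$, an event of probability at most $1-\gamma$ that is independent of everything up to time $t-1$. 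Second, the deterministic rule $F=A\oo G$ \emph{does not create defects}: the glider map $G$ shifts each layer rigidly and merely relocates defects layer by layer, while each elementary annihilation $h_{i,j}$, acting sitewise, is non-expansive for the Hamming distance — a direct check on $\{\qO,\qX\}^2$ shows that a coordinate disagreement is either kept in place, transferred between layers $i$ and $j$, or annihilated, but never duplicated. Consequently $F$ induces an \emph{injective partial map} on the set of defects, so defect worldlines neither split nor merge.

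Combining the two facts, the defects behave as a system of particles that move without multiplying and are killed at positive rate: between consecutive times they are transported by $F$ along non-crossing worldlines in a translation-covariant way, and a worldline arriving at $(k,i,t)$ survives only if there is no reset there, which has conditional probability at most $1-\gamma$. A first-moment (expected-occupation) computation then closes the argument. Writing $m_t(k,i)\isdef\P(\eta^t_{k,i}=1)$ and summing over the unique ancestor of a defect, one obtains the recursion $m_t(k,i)\leq(1-\gamma)\sum_{(k',i')}K\big((k',i'),(k,i)\big)\,m_{t-1}(k',i')$, where $K$ is the worldline-transport kernel whose \emph{column sums are at most $1$} precisely because worldlines do not merge (each defect has at most one ancestor). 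Iterating, using that a product of column-substochastic kernels is column-substochastic and that $m_0\leq 1$, gives $m_t(0,i)\leq(1-\gamma)^t$, as required.

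The crux — and the only place where the hypotheses are genuinely used — is the non-multiplication of defects. A naive light-cone argument would track disagreements through the full dependency neighbourhood of $F$, costing a factor $N$ per step and forcing $\gamma>1-\nicefrac{1}{N}$, i.e.\ large noise; this is exactly what the annihilation structure lets me avoid, since annihilation \emph{transfers} a defect between layers rather than spawning new ones. I expect the main technical care to go into (i) writing down the injective matching explicitly for an arbitrary composition $h=h_{i_n,j_n}\oo\cdots\oo h_{i_1,j_1}$ of elementary annihilations and checking that the induced map on defects is genuinely non-increasing, and (ii) justifying the conditional independence of the destination reset from the worldline's past, so that a factor $1-\gamma$ may legitimately be extracted at each step.
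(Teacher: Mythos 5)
Your coupling is the one the paper uses for Theorem~\ref{thm-glider_an}: you drive both trajectories with a common field of uniforms (one per site, layer and time), arrange the birth--death noise so that with probability at least $\gamma=\min_i\min\{\beta_i+\delta_i,\,2-(\beta_i+\delta_i)\}>0$ (your expression for $\gamma$ equals this) the post-noise symbol at a given site and layer is forced independently of the pre-noise value, and observe that $F=A\circ G$ never increases the number of disagreements: $G$ relocates them layer by layer and each elementary annihilation preserves, transfers or annihilates them. Up to that point the argument is correct and coincides with the paper's.

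The gap is in your final recursion. You assert $m_t(k,i)\leq(1-\gamma)\sum_{(k',i')}K\big((k',i'),(k,i)\big)\,m_{t-1}(k',i')$ with a \emph{deterministic} column-substochastic kernel $K$ and conclude $m_t(0,i)\leq(1-\gamma)^t$. But the worldline-transport map is determined by the random configurations $(X^{t-1},Y^{t-1})$ and is therefore correlated with the defect field. What you actually have is $m_t(k,i)\leq(1-\gamma)\sum_{i'}\P\big(E_{i'}\big)$, where $E_{i'}$ is the event that the candidate ancestor $(k+v_{i'},i')$ carries a defect \emph{and} is the actual ancestor of $(k,i)$. These $N$ events are disjoint, but each is only bounded by $m_{t-1}(k+v_{i'},i')$, and disjointness of ``$(k+v_{i'},i')$ is the ancestor'' does not let you bound the probability that the \emph{random} ancestor is defective by $\max_{i'}m_{t-1}(k+v_{i'},i')$; with adverse correlations the sum can be as large as $\min\{1,\,N\max_{i'} m_{t-1}\}$, which is exactly the factor-$N$-per-step loss you were trying to avoid. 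The repair is the paper's: pass to the expected \emph{total} number of disagreements over a window, for which the pathwise injectivity you established gives $D_K\big(F(x),F(y)\big)\leq D_{K+\Neighb_G}(x,y)$ and hence $\E\big[D_{\{k\}}(X^t,Y^t)\big]\leq(1-\gamma)^t\,N\,(2rt+1)^d$. The polynomial prefactor comes from the initial defects in the backward light cone, not from any multiplication of defects, and it is harmless: Proposition~\ref{prop:coupling:forward} only requires the bound to tend to $0$, so uniform ergodicity and the spatial mixing of the invariant measure follow as you intended.
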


\begin{proof}
%Let $x$ and $y$ be two configurations of particles.
We couple the action of the noise $\theta$ on two configurations $x$ and $y$ in the following manner. For each site $k\in\Z$ and each $i\in \{1,\ldots,N\}$, we draw independently a random number $U_{k,i}$, uniformly distributed on $[0,1]$. We update $x$ and $y$ using the same samples, and according to the following rule:
\begin{itemize}
	\item if $x_{k,i}=\qO$ (resp.\ $y_{k,i}=\qO$) and $1-\beta_i\leq U_{k,i}\leq 1$, we add a particle of type $i$ at position $k$ in configuration $x$ (resp.\ in $y$),
	\item if $x_{k,i}=\qX$ (resp.\ $y_{k,i}=\qX$) and $0\leq U_{k,i}\leq\delta_i$, we remove the particle of type $i$ at position $k$ in configuration $x$ (resp.\ in $y$),
	\item otherwise, $x_{k,i}$ (resp.\  $y_{k,i}$) remains unchanged.
\end{itemize}

Let us first assume that $\delta_i<1-\beta_i$. Then, if $U_{k,i}\in[0,\delta_i]$, whatever the values of $x_{k,i}$ and $y_{k,i}$ are, we know that after the update, there is no particle of type $i$ at position $k$ in either configuration. On the other hand, if $U_{k,i}\in [1-\beta_i,1]$, we know that after the update, there is a particle of type $i$ at position $k$ in both configurations. Thus, if $U_{k,i}\in [0,\delta_i]\cup[1-\beta_i,1]$, then the two updated configurations coincide at component $i$ of position $k$. 
If we now assume that $\delta_i>1-\beta_i$, we can check in the same fashion that if $U_{k,i}\in[0,1-\beta_i]\cup[\delta_i,1]$, the two updated configurations coincide at component $i$ of position $k$.
This shows that in all cases, after the action of the noise, the two configurations coincide at component $i$ of position $k$ with probability at least $\varepsilon_i\isdef\min\{\beta_i+\delta_i, 2-(\beta_i+\delta_i)\}>0$.

%Given two configurations $x$ and $y$,
Let us make a coupling $(X^t,Y^t)_{t\geq 0}$ of the PCA recursively as follows.  Let $U\isdef(U_{k,i}^t)_{k\in\ZZ^d,1\leq i\leq N,t\in\NN}$ be a collection of independent random samples uniformly drawn from $[0,1]$.
%Starting with $X^0\isdef x$ and $Y^0\isdef y$,
Starting with arbitrary configurations $X^0\isdef x^0$ and $Y^0\isdef y^0$,
at each time step, we first apply the deterministic CA $F=A\oo G$ and then perturb the two configurations with the noise, using the random samples in $U$ and the coupling strategy sketched above.

%Starting from the initial configurations $X^0=x$ and $Y^0=y$, let $X^t$ and $Y^t$ be the random configurations obtained after $t$ iterations of the PCA, with the coupling sketched above: at each time step, the deterministic CA $F=A\oo G$ is first applied, then the configurations obtained are perturbed by the noise $\theta$, using the random samples in $U$.

We say that two configurations $x$ and $y$ have a \emph{disagreement} of type $i$ at position $k$ if $x_{k,i}\not=y_{k,i}$. For a finite subset $K\subset \Z^d$, let $D_K(x,y)=\sum_{k\in K}\norm{x_k-y_k}_1$ be the number of disagreements between $x$ and $y$ in $K$. Note that $D_K(x,y)\leq N\cdot\abs{K}$.
%We study how the CA and the noise affect the disagreements separately.

In the two configurations $G(x)$ and $G(y)$, there can be a disagreement $G(x)_{k,i}\neq G(y)_{k,i}$ of type~$i$ at position~$k$ if and only if $x_{k+v_i, i}\neq y_{k+v_i, i}$. Let us recall that $G$ has neighbourhood $\Neighb_G\isdef\{v_1,\ldots,v_N\}$. It follows that $D_K\big(G(x),G(y)\big)\leq D_{K+\Neighb_G}(x,y)$. 
Next, observe that the action of the annihilating rule $A$ does not increase the number of disagreements. Indeed, when applying an annihilation rule $A_{i,j}$ at position $k$,
\begin{itemize}
	\item if there is no disagreement of types $i$ and $j$, then after the action of the annihilation rule, there is still no disagreement,
	\item if exactly one of the two components $i$ and $j$ contains a disagreement, then in the updated configuration, still exactly one of the two components contains a disagreement,
	\item if there are two disagreements of types $i$ and $j$, then in the updated configuration, there are either no disagreement (if there were particles both types in one of the configuration, and none in the other) or still two disagreements (if one configuration has only a particle of type $i$ and the other only a particle of type $j$).
\end{itemize}
The other components are not affected by the annihilation rule.
Combining the effects of the glider $G$ and the annihilation $A$, we find that
\begin{align}
	D_K\big(F(x),F(y)\big) &\leq D_{K+\Neighb_G}(x,y)
\end{align}
for each two configurations $x$ and $y$.

Applying the noise, the expected number of disagreements decreases by a factor at least $1-\varepsilon$, where $\varepsilon\isdef\min_{i\in\{1,\ldots,N\}} \varepsilon_i$. It follows that
\begin{align}
	\E\big[D_K(X^{t+1},Y^{t+1})\,\big|\, X^t,Y^t\big]
		&\leq (1-\varepsilon) D_{K+\Neighb_G}(X^t,Y^t) \;,
%\end{align}
\shortintertext{and thus}
%\begin{align}
	\E\big[D_K(X^{t+1},Y^{t+1})\big] &\leq (1-\varepsilon) \E\big[D_{K+\Neighb_G}(X^t,Y^t)\big] \;,
\end{align}
Consequently, for every $k\in\Z^d$ and $t\geq 0$, we have
\begin{align}
	\E\big[D_{\{k\}}(X^t,Y^t)\big] &\leq (1-\varepsilon)^t D_{k+\Neighb^t_G}(x^0,y^0) \;.
\end{align}
Let $r=\max_{i\in\{1,\ldots,N\}}\abs{v_i}$ be the neighbourhood radius of $G$. The cardinality of the set $\Neighb^t_G$ is bounded by $(2rt+1)^d$.
Thus, we obtain
\begin{align}
	\E\big[D_{\{k\}}(X^{t},Y^{t})\big] &\leq (1-\varepsilon)^t(2rt+1)^dN \;,
\end{align}
%It follows that the probability for $X^t_k$ to be equal to $Y^t_k$ converges to $1$ when $t\rightarrow\infty$. With Lemma~\ref{lemma-coupling}, it implies that the PCA is ergodic.
It follows that $\xPr(X^t_k\neq Y^t_k)\to 0$ as $t\to\infty$, uniformly in the position $k$ and the choice of the initial configurations $x^0$ and $y^0$.  The uniform ergodicity of the PCA and the spatial mixing of its unique invariant measure follow from Proposition~\ref{prop:coupling:forward}.
\end{proof}

\begin{remark}
	Let us highlight the essence of the above argument.
	\begin{itemize}
		\item We have a \emph{discrepancy} function $\delta:S\times S\to\RR^+$ with the property that 				%$\delta(a,b)=0$ if and only if $a=b$.
		\begin{align}
			\text{$\delta(a,b)=0$ \qquad if and only if \qquad $a=b$.}
		\end{align}
			For a finite set $K\subset\ZZ$ and two configurations $x,y$, we define $D_K(x,y)\isdef\sum_{i\in K}\delta(x_i,y_i)$.
		\item We have a CA $F$ that is \emph{almost contractive}, meaning that 
			\begin{align}
				D_K(Fx,Fy) &\leq D_{K+\Neighb}(x,y)
			\end{align}
			for all $x,y\in S^{\ZZ^d}$ and $K\subset \Z^d$.
		\item We have a zero-range noise, identified by a matrix $\theta$, that is \emph{contractive} in the sense that there exists an $\varepsilon>0$ with the following property: for every $a,b\in S$, there is a coupling $(U,V)$ of $\theta(a,\cdot)$ and $\theta(b,\cdot)$ such that $\xExp[\delta(U,V)]\leq (1-\varepsilon)\delta(a,b)$.
	\end{itemize}
	If all these conditions are fulfilled, then the argument above shows that the noisy version of $F$ with noise $\theta$ is uniformly ergodic.
	For instance, the uniform ergodicity of Theorem~\ref{thm-glider_an} persists if we replace the annihilation rule with any other interaction rule $h:S\to S$ satisfying $\norm{h(b)-h(a)}_1\leq\norm{b-a}_1$.

%The condition for the contraction property of $R$ can also be written as : for all $a,b\in S$ with $a\neq b$, we must have
%$\inf_{(U,V)}\xExp[\delta(U,V)]<\delta(a,b)$,
%where the infimum is taken over all couplings $(U,V)$ of
%$R(a,\cdot)$ and $R(b,\cdot)$.
%%\siamak{The infimum $\inf_{(U,V)}\xExp[\delta(U,V)]$ has a dual form via Kantorovich duality,
%%but I am not sure what it is exactly.
%%(When $\delta(a,b)\isdef 1_{a\neq b}$, then the infimum is simply the ordinary Wasserstein
%%distance which is the same as half of the total variation distance
%%$\frac{1}{2}\norm{R(a,\cdot)-R(b,\cdot)}=\sup_E\abs{R(a,E)-R(b,E)}$.)}

%In the next section, we explain how the coupling presented in the proof of Theorem~\ref{thm-glider_an} can be used to prove the ergodicity of another type of gliders, even in a case where the discrepancy function defined above is not sufficient.
	In the next section, we show how the coupling presented in the proof of Theorem~\ref{thm-glider_an} can be used to prove the ergodicity of another type of gliders with noise, even in a case where the approach via discrepancy functions is not sufficient.
	\hfill\remarkqed
\end{remark}

\subsubsection{Simple gliders with reflecting walls}
\label{sec:gliders:walls}

Let us consider a one-dimensional gliders CA $G$ with three types of particles: 
\begin{itemize}
	\item particles of type `$\pW$' have velocity $0$; they play the role of \emph{walls}, %and are denoted by $w$,
	\item particles of type `$\pR$' have velocity $1$; they move one unit to the \emph{right} at each time step, %and are denoted by $r$,
	\item particles of type `$\pL$' have velocity $-1$; they move one unit to the \emph{left} at each time step. %and are denoted by $\ell$.
\end{itemize}
The set of symbols is thus $S=\{\qO,\qX\}^3$ and the neighbourhood is $\Neighb=\{-1,0,1\}$.  We keep the same notations as in the previous section: for $x\in S^{\Z}$, $k\in\ZZ$ and $i\in\{\pW,\pR,\pL\}$, $x_{k,i}=\qX$ means that in $x$, there is a particle of type $i$ at position $k$.

We combine $G$ with a reflection rule $I$ modeling the reflection of left and right particles on walls (see Fig.~\ref{fig:gliderwalls}).
The \emph{reflection} rule $I$ is the CA of neighbourhood $\{0\}$ defined on the same configuration space $S^{\Z}$ by
\begin{align}
	%\forall x\in S^{\Z}, \forall k\in\Z, \quad
	I(x)_k &=
		\begin{cases}
			\begin{cellmatrix}\qX\\ \qO\\ \qX \end{cellmatrix}\quad{}
				& \text{if $x_k=\begin{cellmatrix}\qX\\ \qX\\ \qO\end{cellmatrix}$,} \smallskip \\
			\begin{cellmatrix}\qX\\ \qX\\ \qO\end{cellmatrix}
				& \text{if $x_k=\begin{cellmatrix}\qX\\ \qO\\ \qX\end{cellmatrix}$,} \\
			x_k & \text{otherwise,}
		\end{cases}
\end{align}
for each $x\in S^\ZZ$ and $k\in\ZZ$.
We call the composition $I\oo G$ the (one-dimensional) \emph{gliders with reflecting walls}.
%The two-dimensional version of this model is sometimes called the \emph{mirror model}~\cite{RuiCoh88}.

As in the previous section, we consider a birth-death noise $\theta$, defined by some parameters $\beta_{\pW},\beta_{\pR},\beta_{\pL}\in[0,1]$ and $\delta_{\pW},\delta_{\pR},\delta_{\pL}\in[0,1]$ respectively representing the birth and death rates of the three types of particles.

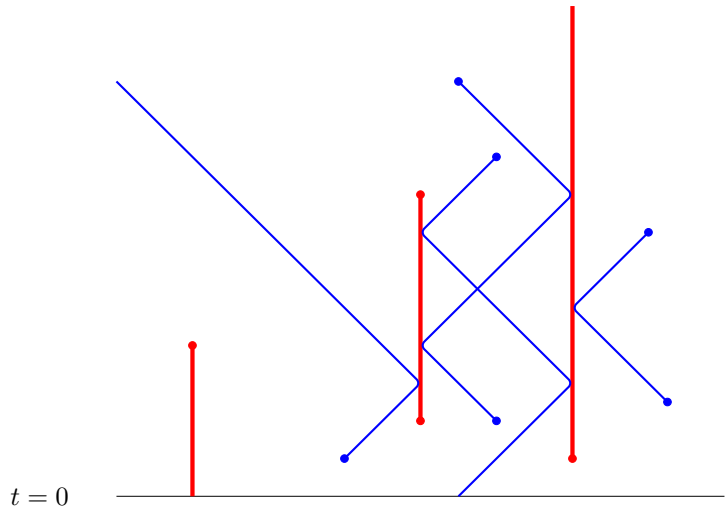
\begin{figure}[h]
\begin{center}
\begin{tikzpicture}[scale=1,rounded corners=2pt]
\foreach \y in {0} \draw[ultra thin] (-4,-0.5*\y) --(4,-0.5*\y) ;
\draw[red, ultra thick] (-3,0)--(-3,2) ;
\draw[fill, red] (-3,2) circle [radius=0.05]  ; 
\draw[red, ultra thick] (0,1)--(0,4) ;
\foreach \y in {1,4} \draw[fill, red] (0,\y) circle [radius=0.05]  ; 
\draw[red, ultra thick] (2,0.5)--(2,6.5) ;
\foreach \y in {0.5} \draw[fill, red] (2,\y) circle [radius=0.05]  ; 
\draw[blue, thick] (0.5,0)--(2,1.5)--(0,3.5)--(1,4.5) ;
\draw[fill, blue] (1,4.5) circle [radius=0.05]  ; 
\draw[blue, thick] (-1,0.5)--(0,1.5)--(-4,5.5) ;
\draw[fill, blue] (-1,0.5) circle [radius=0.05]  ; 
\node[left,overlay] at (-4.5,0) {$t=0$} ;
\draw[blue, thick] (3.25,1.25)--(2,2.5)--(3,3.5) ;
\draw[fill, blue] (3.25,1.25) circle [radius=0.05]  ; 
\draw[fill, blue] (3,3.5) circle [radius=0.05]  ; 
\draw[blue, thick] (1,1)--(0,2)--(2,4)--(0.5,5.5) ;
\draw[fill, blue] (1,1) circle [radius=0.05]  ; 
\draw[fill, blue] (0.5,5.5) circle [radius=0.05]  ; 
\end{tikzpicture}
\end{center}
\caption{Example of space-time diagram of a noisy gliders with reflecting walls.}
\label{fig:gliderwalls}
\end{figure}

\begin{theorem}\label{thm-glider_ref} Let $F=I\oo G$ be the gliders with reflecting walls, and let $\theta$ be a positive birth-death noise. The noisy version of $F$ with noise $\theta$ is uniformly ergodic, with an invariant measure that is spatially mixing and admits a perfect sampling algorithm.
\end{theorem}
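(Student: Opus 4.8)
\emph{Overall strategy.} The plan is to run the process backward in time under the grand coupling induced by the shared-samples noise coupling already used in the proof of Theorem~\ref{thm-glider_an}, and to show that the coalescence probability $p_t(\Phi)$ tends to $1$ as $t\to\infty$; uniform ergodicity, spatial mixing and the perfect sampling algorithm then follow at once from Proposition~\ref{prop:coupling}. Concretely, I would fix a collection $U=(U^n_{k,i})$ of independent uniform samples indexed by site $k\in\ZZ$, layer $i\in\{\pW,\pR,\pL\}$ and time $n\in\NN^-$, and use an update function that, at each site and each layer, resets the symbol to a value depending only on $U$ (a forced birth or a forced death) with a fixed positive probability $\varepsilon_i$, and otherwise applies $F=I\oo G$ followed by the surviving noise. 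The goal is to exhibit, for each component of the symbol at the space-time origin, a \emph{single} backward dependency path that is almost surely cut by such a reset, so that the value at the origin at time $0$ becomes a function of finitely many entries of $U$, independent of the initial configuration.

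\emph{Step 1: the walls decouple.} Neither $G$ (which fixes the $\pW$-layer, walls having velocity $0$) nor $I$ (which never alters the $\pW$-component) acts on the walls, so the wall layer is autonomous: at each site $k$ the sequence of wall symbols is a two-state Markov chain driven solely by the samples $U^{\cdot}_{k,\pW}$, and for a positive birth-death noise this chain is ergodic. Read from the past, the wall symbol at any $(k,n)$ is almost surely fixed by the most recent reset at site $k$, hence is a determined function of $U$ independent of the initial configuration. I would therefore condition on the (almost surely determined) space-time wall background and treat it as fixed.

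\emph{Step 2: given the walls, each particle value follows a single billiard path.} This is the observation that replaces a contraction estimate, which genuinely fails here --- a disagreement in the walls makes the reflection act differently and can \emph{create} particle disagreements, so the discrepancy function of Theorem~\ref{thm-glider_an} is not contractive. Inspecting $I$ shows that at a site carrying a wall it transposes the $\pR$- and $\pL$-components, while at a wall-free site it leaves them unchanged; composing with the shift $G$, the $\pR$-value at the origin (when it is not reset) equals a single component at a single site one step earlier: the $\pL$-predecessor if there is a wall at the origin and the $\pR$-predecessor otherwise. Iterating, and using the determined walls of Step~1, the $\pR$-value at the origin depends on the initial configuration only through a \emph{single} backward trajectory in space-time, namely the reverse billiard path of the particle as it reflects off the walls; the same holds for the $\pL$-value, and the $\pW$-value is already handled in Step~1. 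The absence of branching is exactly the point: $I$ is a bijection that merely permutes the two particle layers, so no ``merging'' of dependencies occurs.

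\emph{Step 3: the path is cut, and conclusion.} Along this single backward path the relevant particle layer is subject, independently at every step, to a reset of probability at least $\varepsilon\isdef\min\{\varepsilon_{\pR},\varepsilon_{\pL}\}>0$, and these resets use the particle samples, which are independent of the wall samples determining the path. A single infinite backward path is therefore almost surely cut by a reset after a geometric number of steps, so each of the three components at the origin is almost surely determined by finitely many entries of $U$ alone. Consequently $p_t(\Phi)\to 1$, and Proposition~\ref{prop:coupling} gives uniform ergodicity, spatial mixing, and the perfect sampling algorithm. The main obstacle, and where care is needed, is precisely the passage from the failed discrepancy argument to this single-path picture: one must check that conditioning on the determined walls is legitimate (the autonomy of the wall layer makes this clean) and that the feared branching at wall sites does not arise, which hinges on $I$ being a layer-transposing bijection rather than a lossy interaction. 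I note that if one only wanted uniform ergodicity and spatial mixing, a forward coupling would also suffice, bounding $\P(X^t_0\neq Y^t_0)$ by the probability of a wall disagreement (decaying like $(1-\varepsilon_{\pW})^t$) plus that of a particle disagreement traced back along its billiard path to such a rare wall disagreement, and invoking Proposition~\ref{prop:coupling:forward}; but the backward single-path argument is what additionally secures the perfect sampling algorithm claimed in the statement.
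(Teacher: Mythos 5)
Your proposal is correct and follows essentially the same route as the paper's proof: a coupling from the past in which the autonomous wall layer at each site is a.s.\ determined by finitely many samples, after which the presence of a left- or right-moving particle at the origin is traced back along a single (non-branching) billiard path that is a.s.\ cut by a reset with per-step probability at least $\min\{\varepsilon_{\pR},\varepsilon_{\pL}\}$, giving $p_t(\Phi)\to 1$ and concluding via Proposition~\ref{prop:coupling}. Your added remarks on why the discrepancy-function contraction of Theorem~\ref{thm-glider_an} fails here, and on the independence of the particle-layer resets from the wall samples, are accurate refinements of points the paper treats more briefly.
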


\begin{proof}
We couple the action of the noise on configurations in the same manner as in the proof of Theorem~\ref{thm-glider_an}.  However, unlike in the previous result, we couple the PCA from the past.

To be more specific, we use an update function of the form $\tu:S\times[0,1]^3\to S$ for the noise $\theta$, where
\begin{align}
	\tu(a;u)_i &\isdef
		\begin{cases}
			\qX		& \text{if $a_i=\qO$ and $1-\beta_i\leq u_i\leq 1$,} \\
			\qO		& \text{if $a_i=\qX$ and $0\leq u_i\leq \delta_i$,} \\
			a_i		& \text{otherwise,}
		\end{cases}
\end{align}
for each $a\in S$, $u\in[0,1]^3$ and $i\in\{\pW,\pR,\pL\}$.
If $U$ is uniformly drawn from $[0,1]^3$, then for every $a\in S$, the value $\tu(a,U)$ is distributed according to $\theta(a,\cdot)$.

We use a family of independent samples $(U_k^n)_{k\in\ZZ,n\in\NN^-}$ uniformly drawn from $[0,1]^3$ to simulate the PCA from the past.  To determine $X^n$, we first apply the CA $F$ on $X^{n-1}$ and then update the value at each site $k$ using the update function $\tu$ and the sample $U_k^n$.
%$(U_{k,i}^n)_{i\in\{\pW,\pR,\pL\}}$.  %Hence, $X^n_k=\tu\big((F X^{n-1})_k,U_k^n\big)$.

First, observe that the evolution of the walls at different sites are independent and are not affected by the other types of particles.  Namely, walls have velocity $0$ and are not affected by the reflection rule, and moreover, the noise is zero-range and acts on walls independently of the other two types of particles.  It follows that the presence or absence of a wall at position $0$ and time $0$ is almost surely uniquely determined by a finite (though random) number of samples $U_{0,\pW}^m$ with $m\leq 0$.

We claim that the presence of left- or right-moving particles at position $0$ and time $0$ is also almost surely a function of a finite number of random samples $U_k^m$.
In order to know if there is a right-moving particle at position $0$ and time $0$, we trace back the possible trajectory of the particle in time.  Each time we take a step back, we first determine the presence or absence of a wall at the current position so as to know whether the particle has changed direction or not.  The potential ancestor at time $-t$ can either be a right-moving particle or a left-moving particle, depending on whether the backward trajectory has met an even or odd number of walls.

Let $\varepsilon_{\pR}=\min\{\beta_{\pR}+\delta_{\pR}, 2-(\beta_{\pR}+\delta_{\pR})\}$ and $\varepsilon_{\pL}=\min\{\beta_{\pL}+\delta_{\pL}, 2-(\beta_{\pL}+\delta_{\pL})\}$ and set $\varepsilon\isdef\min\{\varepsilon_{\pR},\varepsilon_{\pL}\}>0$.  When tracing back the trajectory of a potential right-moving particle, at each step, we have a probability at least $\varepsilon$ of learning whether there is indeed an ancestor particle or not.  Therefore, almost surely, we eventually learn about the presence or absence of an ancestor.  If so, when going up again in time, we can determine whether there is a right-moving particle at position $0$ and time $0$ or not.  In the same fashion, we can almost surely determine the presence or absence of a left-moving particle at position $0$ and time $0$ by exploring a finite part of the samples in $U$.

It follows that $p_t\to 1$ as $t\to\infty$, and Proposition~\ref{prop:coupling} concludes the proof.
\end{proof}

\begin{remark}
	The two-dimensional version of gliders with reflecting walls is often called the \emph{mirror model} (or the \emph{discrete Lorentz gas} model)~\cite{RuiCoh88}.
	In the mirror model, mirrors are placed at some sites of the lattice $\ZZ^2$ in either of the two diagonal directions.  Particles (or beams of light) travel with speed~$1$ vertically or horizontally and are reflected upon hitting the mirrors.
	A similar argument as above shows the ergodicity of the mirror model in presence of positive birth-death noise.
	\hfill\remarkqed
\end{remark}

\subsection{Permutive CA with permutation noise}
\label{sec:permutive}

In this section, the kind of coupling is quite different, since it involves only finite Markov chains: for permutive CA with permutation noise, it is indeed possible to couple the evolution of all trajectories in any finite window.  For the simplicity of the presentation, we focus on the one-dimensional setting.  Analogous results can be obtained in higher dimensions.

Let $F$ be a CA of neighbourhood $\Neighb = \{\ell,\ell+1, \dots, r\}$ and local function $f:S^m \rightarrow S$, with $m=r-\ell+1\geq 2$. We say that $F$ is \emph{left-permutive} (resp. \emph{right-permutive}) if, for all $w\in S^{m-1}$, %$w=w_{\ell+1}\cdots w_{r}\in S^{m-1}$,
the mapping
%\begin{align}
%\tau_w : S &\rightarrow S\\
%a &\mapsto f(aw) \hspace{1cm} (\mbox{resp. } f(wa))
%\end{align}
$\tau_w: S\to S$ given by $\tau_w(a)\isdef f(aw)$ (resp., $\tau_w(a)\isdef f(wa)$) is bijective.
%$a\mapsto f(aw)$ (resp., $a\mapsto f(wa)$) is bijective.  
A CA is \emph{permutive} if it is either left- or right-permutive; it is \emph{bipermutive} if it is both left- and right-permutive. For example, when $S$ is the ring $\Z_n$ of integers modulo $n$, the \emph{affine} CA defined by $f(x,y)\isdef ax+by+c$ for $a,b,c\in\Z_n$ is left-permutive (resp., right-permutive) if $a$ (resp. $b$) is invertible in $\Z_n$.

Let $F$ be a permutive CA. Using the bijections $\tau_w$ one can prove that $F$ is surjective.  Every surjective CA with configuration space $\X$ preserves the uniform Bernoulli measure $\lambda$ on $\X$ (see e.g.~\cite[Thm.~5.21]{Kur03}).
%For deterministic CA, surjectivity is equivalent to preserving the uniform Bernoulli measure $\lambda$ on $\X$.
The next proposition shows that when a permutive CA is subjected to a zero-range noise that preserves $\lambda$, the resulting PCA indeed converges to $\lambda$. The proof below is adapted from a work of Vasilyev~\cite{Vas78,TooVasStaMitKurPir90}.
An alternative proof (for additive noise) is provided at the end of Section~\ref{sec:surj:proof}.

\begin{theorem}\label{thm-perm}
	%Let $F$ be a permutive CA, and let $R$ be a zero-range positive noise preserving the uniform distribution. The noisy version $\Phi$ of $F$ with noise $R$ is ergodic, and its unique invariant distribution is the uniform distribution $\lambda$.
	Every PCA resulting from adding positive permutation noise to a permutive CA is uniformly ergodic with the uniform Bernoulli measure as its unique invariant measure.
%	Every perturbation of a permutive CA with a positive permutation noise is ergodic with the uniform Bernoulli measure as its unique invariant measure.
\end{theorem}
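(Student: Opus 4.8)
The plan is to exploit two features in tandem: the rigidity of permutive CA, which preserve the uniform Bernoulli measure $\lambda$ and enjoy a bijective reconstruction property, and the smoothing effect of permutation noise, whose matrix $\theta$ is doubly stochastic with positive rates. First I record that $\lambda$ is invariant: a permutive CA is surjective, hence preserves the uniform measure by the fact cited in the text, while a permutation noise preserves the uniform law on $S$ (its matrix is doubly stochastic) and therefore preserves $\lambda$; thus $\lambda\Phi=\lambda$. It then remains to prove that $\lambda$ attracts every initial condition uniformly, that is, $\sup_{x\in\X}\norm{\Phi^t(x,\cdot)-\lambda}_A\to 0$ for every finite window $A$. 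Without loss of generality I take $F$ to be right-permutive (the left case is symmetric) and shift the neighbourhood so that $\Neighb=\{0,1,\ldots,m-1\}$ with $a\mapsto f(w,a)$ bijective for each $w\in S^{m-1}$.

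The engine is a \emph{coupling on a finite window}, so that $p_t(\Phi)\to 1$ and Proposition~\ref{prop:coupling} applies. Right-permutivity supplies the algebraic structure. Writing the pre-noise update as $\tilde X^{t+1}_k=f(X^t_k,\ldots,X^t_{k+m-1})$ and the noise as independent random permutations $\varsigma^{t+1}_k$ applied coordinatewise, one can solve for the rightmost argument: the data $X^t_k,\ldots,X^t_{k+m-2}$ together with $\tilde X^{t+1}_k$ and $\varsigma^{t+1}_k$ determine $X^t_{k+m-1}$. Iterating this reconstruction identifies, for each fixed realization of the noise, a bijective, $\lambda$-preserving correspondence between the window pattern $X^t_A$ and a bounded column of values read along the leading edge of the light cone. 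In these co-moving coordinates the window marginal becomes a finite-state, time-inhomogeneous Markov chain whose transition is multiplication by $\theta$ composed with the deterministic, measure-preserving permutive rearrangement. Since $\theta$ has positive rates it admits a common (Doeblin) minorant $m(b)=\min_a\theta(a,b)$ with $\gamma\isdef\sum_b m(b)>0$; realizing the noise so that, with probability $\gamma$, the output is redrawn from $m/\gamma$ independently of the input and identically across all copies, the bounded co-moving chain coalesces at a positive rate per step, hence in finite time almost surely. As $\theta$ is doubly stochastic, the chain's stationary law is uniform on $S^A$, so all trajectories merge onto the same, uniformly distributed, window pattern. This is the finite Markov chain hidden in the model, in the spirit of Vasilyev~\cite{Vas78}.

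The main obstacle is precisely to keep this chain finite-dimensional. The light cone of $X^t_A$ spreads linearly, so the raw window marginal is not autonomously Markov, and one cannot argue by a naive total-variation contraction: applying a fixed positive doubly-stochastic channel independently to a growing number of coordinates does \emph{not} contract total variation to the product-uniform measure uniformly in the number of coordinates (for instance $(1-p,p)^{\otimes L}$ drifts to total-variation distance $1$ from uniform as $L\to\infty$). The role of the reconstruction/co-moving reformulation is exactly to reorganize the $O(t^2)$ noise insertions inside the triangle of dependence so that a \emph{bounded} state receives fresh doubly-stochastic noise at a positive rate; checking that this reorganization genuinely closes into a finite chain, and verifying its irreducibility and aperiodicity from the positivity of $\theta$, is the crux. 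Once this is established, uniform ergodicity with unique invariant measure $\lambda$ follows from Proposition~\ref{prop:coupling}. I would finally note a softer alternative via relative entropy, whose tensorization absorbs the growing window cleanly; this is the argument of Section~\ref{sec:surj:proof} for surjective CA (Theorem~\ref{thm-surj}), but it only yields convergence for shift-invariant initial measures and is therefore weaker than the window coupling above.
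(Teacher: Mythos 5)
There is a genuine gap, and you locate it yourself: after correctly proving that $\lambda$ is invariant and correctly diagnosing why a naive total-variation contraction on a growing window fails, you write that ``checking that this reorganization genuinely closes into a finite chain \dots is the crux'' --- and then you do not check it. The reconstruction you invoke (right-permutivity lets you solve for $X^t_{k+m-1}$ from $X^{t+1}_k$, $X^t_k,\dots,X^t_{k+m-2}$ and the noise) runs \emph{backward} in time and is the standard tool for proving surjectivity/pre-injectivity; it does not by itself produce a forward-in-time autonomous finite-state chain for the window marginal. The paper's actual mechanism is different and is the missing ingredient: condition on the sequence $(W^t)$ of boundary words on the $r$ sites immediately to the right of the window $K$. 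Permutivity makes the frozen-boundary update $\tau^{(K)}_w:S^K\to S^K$, $x\mapsto f^{(K)}(xw)$, a \emph{bijection} of $S^K$ (it is triangular with bijective diagonal), so the conditional window evolution is a time-inhomogeneous chain with kernels $P_w(x,\cdot)=\prod_k\theta\big((\tau^{(K)}_w x)_k,\cdot\big)$, each of which is a strictly positive doubly stochastic matrix fixing $\lambda_K$; since there are only finitely many $w\in S^r$, the Dobrushin coefficients satisfy $\rho\isdef\max_w\rho_w<1$, giving $\norm{\mu\Phi^t-\lambda}_K\leq\rho^t$.

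Your proposed endgame is also off in a way worth flagging. You want the window states of \emph{different copies} (different initial conditions) to coalesce via a shared Doeblin minorant, so that $p_t(\Phi)\to 1$ and Proposition~\ref{prop:coupling} applies. But two copies started from different configurations generate \emph{different} boundary sequences $(W^t)$, hence evolve under different time-inhomogeneous chains; sharing the noise does not merge them, and a coalescence achieved at one step is destroyed at the next when the uncoalesced boundary is re-injected. The paper sidesteps this entirely by comparing each copy not to another copy but to $\lambda_K$, which is simultaneously stationary for \emph{every} $P_w$ --- that is why no literal trajectory coupling is needed. Consistently, the paper does not claim a perfect sampling algorithm or $p_t(\Phi)\to 1$ for this class (it lists perfect sampling only for cases \nRoman{1}--\nRoman{4} and \nRoman{6}); for instance, for the XOR CA with additive noise and the natural update function, $x\mapsto\Psi^t(x;U)_0$ is never constant, so that route cannot work as stated. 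Your closing remark about the entropy alternative and its restriction to shift-invariant initial measures is accurate.
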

\begin{proof}
	Let $F$ be a permutive CA with local rule $f$, and $\Theta$ a permutation noise with noise matrix $\theta$.  Let $\Phi$ denote the resulting noisy CA.
	We will prove that for every $n\in\N$ and every initial measure $\mu$ on $\X$, the marginal distribution of $\mu\Phi^t$ on $K=\{-n,-n+1,\ldots,n\}$ converges exponentially to the uniform Bernoulli distribution on $S^K$, which we denote by $\lambda_K$.  More specifically, we will prove that for each $n\in\N$, there exists a real number $\rho<1$ such that for every $\mu\in\M(\X)$ and each $t\in\NN$, we have $\norm{\mu\Phi^t-\lambda}_K\leq\rho^t$, where as before, $\norm{\nu'-\nu}_K$ denotes the total variation distance between the marginal distributions of $\nu$ and $\nu'$ on $K$.

Let us first assume that $F$ is left-permutive with neighbourhood $\Neighb=\{0,1,\ldots,r\}$.  %If we set $x_{N+1}\ldots x_{N+r}=w_1\ldots w_r$, then 
By permutivity of $F$, for every $w\in S^{r}$ we have a bijection 
%\begin{align}
%\sigma_w : S^K &\longrightarrow S^K\\
%x &\longmapsto f(xw),
%\end{align}
$\tau^{(K)}_w:S^K\to S^K$ given by
\begin{align}
	\tau^{(K)}_w(x) &\isdef f^{(K)}(xw) \;,
\end{align}
where $f^{(K)}$ denotes the map $S^{\Neighb(K)}\to S^K$ induced by the local rule $f$.

When fixing the word $w$ as a boundary condition on the right of $K$, the PCA $\Phi$ transforms a word $x$ in $S^K$ to a random word $Z$ in $S^K$ distributed according to a product distribution with marginal distribution $\theta(y_k,\cdot)$ at site $k\in K$, where $y=\tau^{(K)}_w(x)$.
We denote by $P_w(x,z)$ the probability that $x\in S^K$ is transformed into $z\in S^K$, that is,
$P_w(x,z)=\prod_{k\in K}\theta(y_k,z_k)$.

Since the map $\tau^{(K)}_w$ is bijective, it preserves the uniform distribution $\lambda_K$. By assumption, the noise matrix $\theta$ also preserves the uniform distribution on $S$, so we obtain $\lambda_K P_w=\lambda_K$.

For each $w\in S^r$, the matrix $P_w$ is a positive stochastic matrix. Therefore, there exists $\rho_w<1$ such that for every two probability distributions $q,q'$ on $S^K$, we have 
%$$||P_w\nu-P_w\nu'||_1\leq \theta_w ||\nu-\nu'||_1,$$
\begin{align}
	\norm{q' P_w - q P_w}_{\TV} &\leq \rho_w\norm{q'-q}_{\TV} \;,
\end{align}
where $\norm{q'-q}_{\TV}$ denotes the total variation distance between $q$ and $q'$.
%The above inequality is true in particular for $\theta_w=1-\varepsilon_w$, where $\varepsilon_w=\min\{P_w(a,b) \, ; \; a,b\in S^K\}$.
Let us set $\rho\isdef\max\{\rho_w \, ; \; w\in S^r\}$. It follows that for any sequence $(w^t)_{t\geq 0}$ of words of $S^r$, we have
%$||P_{w_{t-1}}\ldots P_{w_1}P_{w_0}\nu-P_{w_{t-1}}\ldots P_{w_1}P_{w_0}\nu'||_1\leq\rho^t ||\nu-\nu'||_1.$
\begin{align}
	\norm{q'P_{w^0}P_{w^1}\cdots P_{w^{t-1}}-q P_{w^0}P_{w^1}\cdots P_{w^{t-1}}}_{\TV}
		&\leq \rho^t\norm{q'-q}_{\TV} \;.
\end{align}
In particular, for $q'=\lambda_K$, we obtain that for every distribution $q$ on $S^K$ and every sequence $(w^t)_{t\geq 0}$ of words in $S^r$,
%$||P_{w_{t-1}}\ldots P_{w_1}P_{w_0}\nu-\lambda_K||_1\leq \theta^t ||\nu-\lambda_K||_1\leq 2\theta^t.$
$\norm{q P_{w^0}P_{w^1}\cdots P_{w^{t-1}} - \lambda_K}_{\TV}\leq\rho^t\norm{q-\lambda_K}_{\TV}\leq\rho^t$.

\begin{figure}[h]
	\begin{center}
	\begin{tikzpicture}[xscale=1.2, yscale=0.6] %[x=10pt,y={(0pt,0.75cm)}]
	\foreach \y in {0,1,2,3,4} \draw (-3.5,\y)--(3.5,\y) ;
	\foreach \y in {0,1,2,3} {
		\node[left,overlay] at (-3.6,0.5+\y) {$t=\y$} ;
		\node at (2.4,0.5+\y) {$w^{\y}$} ;
	} ;
	%\draw[->, thick] (-3.4,-0.2)--(-3.4,4.5) ;
	\foreach \x in {-2, 2.8, 2} \draw (\x,0)--(\x,4.5) ;
	\node[right] at (-1.5,0.5) {$U^0\sim q$} ;
	\node[right] at (-1.5,1.5) {$U^1\sim q P_{w^0}$} ;
	\node[right] at (-1.5,2.5) {$U^2\sim q P_{w^0}P_{w^1}$} ;
	\node[right] at (-1.5,3.5) {$U^3\sim q P_{w^0}P_{w^1}P_{w^2}$} ;
	\node[below] at (-2,0) {$-n$} ;
	\node[below] at (2,0) {$+n$} ;
	\node[below] at (2.8,0) {$n+r$} ;
	\end{tikzpicture}
	\end{center}
	\caption{Illustration of the proof of Theorem \ref{thm-perm}.}
	\label{fig:thm-perm}
\end{figure}

Let now $\mu$ be a distribution on $\X$. When iterating $\Phi$, it induces a random sequence of words $(W^t)_{t\geq 0}$ on $\{n+1,\ldots,n+r\}$. Conditioning on this sequence and using the above inequality, we get
%$$\forall t\geq 0, \; ||(\Phi^t\mu)_{|K}-\lambda_K||_1\leq  \max_{w^0,\ldots,w^{t-1}\in S^r} ||P_{w^{t-1}}\ldots P_{w^1}P_{w^0}\mu_{|K}-\lambda_K||_1 \leq 2\theta^t.$$
\begin{align}
	\norm{\mu\Phi^t-\lambda}_{K} &\leq
		\max_{w^0,\ldots,w^{t-1}\in S^r}
			\norm{\mu_K P_{w^0}P_{w^1}\cdots P_{w^{t-1}} - \lambda_K}_{\TV}
		\leq\rho^t
\end{align}
for every $t\in\NN$ (see Fig.~\ref{fig:thm-perm}).

If the neighbourhood of $F$ is not of the form $\Neighb=\{0,1,\ldots,r\}$, then there exists a number $s\in\Z$ such that $F\circ\sigma^s$ is a left-permutive CA having a neighbourhood of that form.
%The noisy version of $F\circ\sigma^s$ is %$\Phi\circ\sigma^s$,
%$\Phi_s(x,E)\isdef\Phi(\sigma^s x,E)$ and
If we denote the noisy version of $F\circ\sigma^s$ by $\Phi_s$, the above inequality yields
%$||((\Phi\circ\sigma^s)^t\mu')_{|K}-\lambda_K||_1\leq 2\theta^t,$
$\norm{\mu'\Phi_s^t-\lambda}_K\leq\rho^t$
for every distribution $\mu'$, in particular, for $\mu'\isdef\sigma^{-st}\mu$.
With this choice, $\mu'\Phi_s^t=\mu\Phi^t$ and we obtain
%Since $\Phi$ and $\sigma$ commute, we obtain
%$||(\Phi^t\mu)_{|K}-\lambda_K||_1\leq 2\theta^t,$
$\norm{\mu\Phi^t-\lambda}_K\leq\rho^t$, which concludes the proof.
The right-permutive case is analogous.
\end{proof}

\section{Entropy method: surjective CA with additive noise}
\label{sec:entropy}
The purpose of this section is to prove that under the action of a surjective CA perturbed by positive additive noise, every shift-invariant probability measure is attracted towards the uniform Bernoulli measure.  This does not settle the ergodicity question because we do not know if other non-shift-invariant measures are attracted towards the same measure, and we do not know if the uniform Bernoulli measure is the only invariant measure.

The idea of the proof is as follows: we know that a surjective CA preserves the entropy per site of shift-invariant probability measures.  On the other hand, positive additive noise
increases the entropy unless the measure has maximal entropy.
Combining these two, we get that a surjective CA followed by positive additive noise
increases the entropy unless the measure has maximal entropy.
This however is not quite enough to prove convergence to the measure of maximal entropy because entropy per site is not a continuous function of the measure and hence cannot serve as a simple Lyapunov function; we need to control how much the entropy increases.

The analysis of finite-state Markov chains via entropy is classic and goes back to the ideas of Boltzmann (see e.g.~\cite[Sec.~II.7]{Pen69} or~\cite[Sec.~II.4]{Lig85}).
The use of entropy to describe the asymptotic behaviour of continuous-time interacting particle systems was pioneered by Holley~\cite{Hol71,Lig85} and has been very successful.
%The entropy method for PCA is less developed, partly due to the difficulties arising from the parallel nature of the dynamics.  However, see~\cite{Yag00} and~\cite{DaiLouRoe02} for some interesting results.
For applications of the entropy method to PCA see~\cite{KozVas80,Yag00,DaiLouRoe02}.

%Formally, let $F$ be a surjective CA on $S^{\Z^d}$ perturbed by an additive noise.
%Namely, we identify $S$ with an arbitrary finite Abelian group $\GG$ and the noisy version of a symbol $a\in S$ is described by $a+N$
%where $N$ is an $S$-valued random variable and addition is the addition of the group $\GG$. We assume that the noise distribution is independent at each site.
%So, the noisy version of a configuration $x\in S^{\Z^d}$
%is $x+Z$ where $Z$ is a random configuration with distribution $\nu$,
%and $\nu$ is a product measure with marginal distribution $q$ at each site.
%Hereafter, we assume that $q$ is strictly positive.
%The noisy CA acts on probability measures of $S^{\Z^d}$ like $\pi\mapsto (F\pi)*\nu$,
%where $*$ is the convolution product.  The obtained probabilistic cellular automaton is denoted by $F_\nu$.

%The main purpose of this section is to prove the following result.
In this section, we prove the following result.

\begin{theorem}\label{thm-surj}
\label{thm:surj:ergodicity:shift-invariant}
%	Let $F$ be a surjective CA perturbed by an additive noise $\nu$ which is a product measure with strictly positive marginal distribution at each site.
%	For every shift-invariant distribution $\pi$ on $S^{\Z^d}$,
%	the iterates of the noisy CA $F_\nu$ on $\pi$ converge
%	to the uniform Bernoulli measure on $S^{\Z^d}$.
	Let $\Phi$ be a PCA on configuration space $\X$ obtained by perturbing a surjective CA with a positive additive noise.
	Then, the uniform Bernoulli measure $\lambda$ on $\X$ is invariant under $\Phi$ and $\mu\Phi^t\to\lambda$ weakly as $t\to\infty$ for every shift-invariant measure $\mu$ on $\X$.
%	Then, the uniform Bernoulli measure $\lambda$ on $\X$ is invariant under $\Phi$ and the iterates $\mu\Phi^t$ of $\Phi$ on every shift-invariant probability measure $\mu$ on $\X$ converge weakly to $\lambda$ as $t\to\infty$.
\end{theorem}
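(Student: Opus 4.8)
The plan is to run the entropy method in its quantitative form, turning the qualitative facts ``$F$ preserves entropy'' and ``noise increases entropy'' into a contraction of an entropy deficit. For a shift-invariant $\nu\in\M(\X)$ write $h(\nu)\isdef\lim_{n}\abs{I_n}^{-1}H_\nu(X_{I_n})$ for the entropy per site, with $I_n\isdef[-n,n]^d\cap\ZZ^d$ and $H_\nu$ the Shannon entropy of the indicated marginal, and set $g(\nu)\isdef\log\abs{S}-h(\nu)\geq 0$, the specific relative entropy of $\nu$ with respect to $\lambda$. I will use three standard facts: $\lambda$ is the \emph{unique} shift-invariant measure with $h(\lambda)=\log\abs{S}$ (equivalently $g(\lambda)=0$); $h$ is affine on shift-invariant measures; and $h$ is upper semicontinuous for the weak topology.

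Invariance of $\lambda$ is immediate: a surjective CA preserves $\lambda$, and the additive-noise matrix $\theta(a,b)=q(b-a)$ is doubly stochastic, so $\Theta$ preserves $\lambda$ as well; hence $\lambda\Phi=(\lambda F)\Theta=\lambda\Theta=\lambda$. For the convergence, fix a shift-invariant $\mu$ and put $\mu_t\isdef\mu\Phi^t$; each $\mu_t$ is shift-invariant since $F$ commutes with $\sigma$ and $\Theta$ is zero-range. Then I assemble two ingredients. (i) Being surjective, $F$ preserves the entropy per site of shift-invariant measures, so $g(\nu F)=g(\nu)$. (ii) Positive additive noise strictly raises the entropy away from its maximum: the noise acts independently at each site as the channel $a\mapsto a+N$ with $N\sim q$ fully supported, and a telescoping over the coordinates of $I_n$ (revealing a coordinate as an output while conditioning on the already-revealed outputs and the not-yet-touched inputs) expresses $h(\nu\Theta)-h(\nu)$ as a per-site average of nonnegative terms $H(p*q)-H(p)$, where $p$ ranges over conditional single-site input laws; each term vanishes exactly when $p$ is uniform.

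The heart of the proof is to upgrade (ii) to a contraction. Convolution by a fully supported $q$ on the finite group $\GG=S$ is a channel with a strong data-processing constant $\eta=\eta(q)\in[0,1)$, i.e.\ $\mathrm{D}_{\mathrm{KL}}(p*q\,\|\,u)\leq\eta\,\mathrm{D}_{\mathrm{KL}}(p\,\|\,u)$ for every distribution $p$ on $S$, with $u$ uniform; that $\eta<1$ follows from compactness of the simplex together with strict contraction at every non-uniform $p$, and a Fourier computation yields $\eta\leq\max_{\chi\neq 1}\abs{\hat q(\chi)}^2$. Feeding this single-site contraction through the telescoping of (ii), I aim to obtain the per-site contraction
\begin{align}
	g(\nu\Theta)\leq\eta\,g(\nu)
\end{align}
for every shift-invariant $\nu$. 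Combined with (i) this gives, for each $t$,
\begin{align}
	g(\mu_{t+1})=g\big((\mu_t F)\Theta\big)\leq\eta\,g(\mu_t F)=\eta\,g(\mu_t),
\end{align}
whence $g(\mu_t)\leq\eta^{t}\,g(\mu_0)\to 0$, that is $h(\mu_t)\to\log\abs{S}$. To conclude, let $\bar\mu$ be any weak limit point, say $\mu_{t_k}\to\bar\mu$; upper semicontinuity gives $h(\bar\mu)\geq\limsup_k h(\mu_{t_k})=\log\abs{S}$, so $h(\bar\mu)=\log\abs{S}$ and, by uniqueness of the maximal-entropy measure, $\bar\mu=\lambda$. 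As every weak limit point in the compact metrizable space $\M(\X)$ equals $\lambda$, the whole sequence converges, $\mu_t\to\lambda$.

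The main obstacle is establishing the per-site contraction $g(\nu\Theta)\leq\eta\,g(\nu)$. The subtlety is that the entropy-gain telescoping conditions a coordinate on a mixed context (outputs on one side, inputs on the other), whereas $g(\nu\Theta)$ is naturally written through one-sided output conditionings; reconciling the two so that the single-site strong data-processing inequality applies coordinatewise, and checking that no deficit is hidden in long-range correlations invisible to one site-wise noising step, is the delicate point. Should a clean one-step contraction resist a fully general proof, a fallback is to combine the monotonicity $g(\mu_{t+1})\leq g(\mu_t)$ with upper semicontinuity on the $\omega$-limit set: any shift-invariant $\Phi$-invariant measure $\pi$ satisfies $h(\pi\Phi)-h(\pi)=h\big((\pi F)\Theta\big)-h(\pi F)=0$, so the strictness in (ii) forces $\pi=\lambda$, and the affineness of $h$ then identifies the barycenter of any invariant measure carried by the $\omega$-limit set with $\lambda$; upgrading this from invariant limit points to the full (non-Cesàro) convergence asserted is precisely what the quantitative contraction buys.
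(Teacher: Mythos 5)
Your overall architecture (invariance of $\lambda$ via double stochasticity, entropy preservation by surjectivity, entropy gain from positive noise, upper semicontinuity of $h$ plus uniqueness of the maximal-entropy measure to identify weak limit points) is sound and is broadly the same entropy strategy as the paper's. But the step you yourself call the heart of the proof --- the per-site contraction $g(\nu\Theta)\leq\eta\,g(\nu)$ --- is not established: you write that you ``aim to obtain'' it and explicitly flag the reconciliation of the telescoping with the single-site strong data-processing inequality as the delicate point. As written this is a genuine gap. Without that inequality (or some substitute quantitative control) you only have the monotonicity $g(\mu_{t+1})\leq g(\mu_t)$, and your fallback identifies only the shift-invariant \emph{$\Phi$-invariant} measures with $\lambda$; weak limit points of the non-averaged orbit $\mu\Phi^t$ need not be $\Phi$-invariant, so the fallback yields only Ces\`aro convergence, not the asserted $\mu\Phi^t\to\lambda$. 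There is also an inaccuracy in the one sub-step you do sketch: $\max_{\chi\neq 1}\abs{\hat q(\chi)}^2$ is the $\chi^2$-contraction coefficient, which in general \emph{lower}-bounds the KL contraction coefficient, so it cannot be invoked to conclude $\eta<1$; one does have $\eta<1$ for fully supported $q$, but this needs a separate argument (local $\chi^2$ control near the uniform distribution combined with compactness away from it).

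For what it is worth, the contraction you want is true and can be obtained by tensorization: expand $D\bigl(P\,\theta^{\otimes n}\,\big\|\,u^{\otimes n}\bigr)$ by the chain rule and use that the output at one site is conditionally independent of the inputs at the other sites given the input at that site; this reduces the $n$-site statement to the single-site one. The paper, however, avoids needing any multiplicative contraction. It proves an \emph{additive} finite-window inequality $H\bigl((FX)_J\bigr)\geq H(X_J)-c$ with $c$ independent of $J$ (Lemma~\ref{lem:surjective:entropy}, via pre-injectivity and the Garden-of-Eden theorem, valid for arbitrary, not necessarily shift-invariant, $X$), together with a matching gain lemma (Lemma~\ref{lem:sum:entropy:vector}) stating that site-wise positive noise adds at least $2c$ to the entropy of any block whose entropy density is below $\hmax-\varepsilon$. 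Simple bookkeeping then forces every block entropy above $\abs{J}(\hmax-\varepsilon)-c$ after finitely many steps, which suffices to identify all limit points with $\lambda$. If you supply the tensorized contraction (and the correct proof that $\eta<1$), your route becomes a legitimate, quantitatively sharper alternative; as submitted, it is incomplete precisely at its central step.
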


Before entering the proof, let us note that the NEC-majority CA of Example~\ref{exp:toom} is not surjective.
The non-surjectivity in that example follows easily from the Garden-of-Eden theorem, which is discussed below in the proof of Lemma~\ref{lem:surjective:entropy}.
For a more direct argument, one can verify that, for instance, any configuration that has an occurrence of the pattern
%\useshortskip
\begin{align}
	\xConfig{
		\symb{1} \& \symb{0} \& \symb{0} \& \symb{0} \\
		\symb{0} \& \symb{0} \& \symb{0} \& \symb{0} \\
		\symb{1} \& \symb{1} \& \symb{0} \& \symb{0} \\
		\symb{0} \& \symb{1} \& \symb{0} \& \symb{1} \\
	}
\end{align}
has no pre-image under the NEC-majority CA.

For clarity, we present the proof of Theorem~\ref{thm:surj:ergodicity:shift-invariant} in the one-dimensional setting, but everything goes through similarly in the higher-dimensional case.  The notion of additive noise requires that the set of symbols for the PCA is identified with a finite Abelian group.  This identification is however arbitrary.  In fact, the theorem remains true if the additive noise is replaced with any positive permutation noise.  We stick to the additive noise to keep the presentation simple.
At the end of this section, we also use the entropy method to give an alternate proof of Theorem~\ref{thm-perm} in case of additive noise.

\subsection{Entropy}

Let us fix the notation and terminology for entropy.
The entropy of a random variable $A$ taking values from a finite set $\Sigma$ will be denoted by
\begin{align}
	H(A) &\isdef -\sum_{a\in\Sigma} \xPr(A=a)\log\xPr(A=a) \;.
\end{align}
%We recall that if $A$ can take $k$ possible values, then $H(A)\leq\log(k)$ and the equality holds if and only if $A$ is uniformly distributed.
We recall that $H(A)\leq\log\abs{\Sigma}$ and the equality holds if and only if $A$ is uniformly distributed over~$\Sigma$.
If $B$ is another random variable on the same probability space, we write
\begin{align}
	\widehat{H}(A\,|\,B) &\isdef
		-\sum_{a\in\Sigma} \xPr(A=a\,|\,B)\log\xPr(A=a\,|\,B)
\end{align}
for the entropy of the conditional distribution of $A$ given $B$.
Note that this is a random variable, and is not the same as the usual notion of conditional entropy which is a number.
The usual conditional entropy of $A$ given $B$ is given by
\begin{align}
	H(A\,|\,B) \isdef \xExp\big[\widehat{H}(A\,|\,B)\big] \;.
		%= \sum_{a,c} \xPr(A=a,C=c)\log\xPr(A=a\,|\,C=c) \;.
\end{align}
Entropies satisfy the chain rule $H(A,B)=H(B) + H(A\,|\,B)$, where $H(A,B)$ denotes the entropy of the pair $(A,B)$.
As a consequence, if a random variable $B\isdef g(A)$ is a function of another random variable $A$, then $H(A)=H(B)+H(A\,|\,B)\geq H(B)$.
The mutual information
\begin{align}
	I(A;B) &\isdef H(A)-H(A\,|\,B) = H(B)-H(B\,|\,A)
\end{align}
between two random variables $A$ and $B$ is always non-negative and takes value $0$ if and only if the two variables are independent.

The \emph{entropy per site} of a shift-invariant probability measure $\mu$ refers to the limit
\begin{align}
	h(\mu) &\isdef
		\lim_{n\to\infty}\frac{H\left(X_{[-n,n]}\right)}{2n+1} \;,
\end{align}
where $X$ is a (one-dimensional) random configuration with distribution $\mu$.
%This is the entropy rate of $\mu$ under the shift action.
Among the shift-invariant measures on $\X\isdef S^\ZZ$, the uniform Bernoulli measure is the unique measure with maximum entropy per site $\hmax\isdef\log\abs{S}$.

For background on the entropy, we refer to the book of Cover and Thomas~\cite{CovTho91} in the context of information theory and to the book of Denker, Grillenberger and Sigmund~\cite{DenGriSig76} in the context of dynamical systems.

\subsection{The effect of a surjective CA on entropy}

%We start by looking at how a surjective CA with neighbourhood $\Neighb\isdef[-r,r]\cap\ZZ$ affects the entropy of a finite region. 
We start by looking at how a surjective CA affects the entropy of a finite region.
%Given an interval $J=[i,j]\subseteq\Z$, we denote by $N(J)=[i-r,j+r]$
%the set of sites within distance $r$ from $J$.

\begin{lemma}
\label{lem:surjective:entropy}
	Let $F$ be a one-dimensional surjective CA.  There is a constant $c>0$ such that for every random configuration $X$ and every finite interval $J\subseteq\Z$, we have
	\begin{align}
		H\big((FX)_J\big) &\geq H(X_J) - c \;.
	\end{align}
\end{lemma}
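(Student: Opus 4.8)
The plan is to reduce the statement to the combinatorial \emph{balance} property of surjective CA and then run a short entropy computation. First I would normalise the neighbourhood. Since enlarging the neighbourhood of $F$ (by adding dummy dependencies) does not change $F$, I may assume $\Neighb=\{\ell,\ell+1,\ldots,r\}$ is an interval with $\ell\leq 0\leq r$, so that in particular $0\in\Neighb$. Two features are then available at once: for every interval $J$ we have $J\subseteq\Neighb(J)$, and $\abs{\Neighb(J)}-\abs{J}=r-\ell$ is a constant independent of $J$. Writing $f^{(J)}\colon S^{\Neighb(J)}\to S^J$ for the map induced by the local rule, we have $(FX)_J=f^{(J)}\big(X_{\Neighb(J)}\big)$.

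The key input is that a surjective CA is \emph{balanced}, which is a consequence of the Garden-of-Eden theorem (Hedlund). Concretely, I would use that for every interval $J$ the induced map $f^{(J)}$ is exactly $\abs{S}^{\,r-\ell}$-to-one, i.e.\ each pattern $u\in S^J$ has precisely $\abs{S}^{\,r-\ell}$ preimages in $S^{\Neighb(J)}$. For a neighbourhood in canonical form $\{0,\ldots,r'\}$ this is the standard statement; the general interval case follows by conjugating with a shift, which identifies $f^{(J)}$ with a canonical block map up to a bijective relabelling of the output indices and hence preserves all fibre sizes. Note that surjectivity alone only gives that the \emph{average} fibre size equals $\abs{S}^{\,r-\ell}$; it is precisely balance that bounds every fibre by this number, so this is the essential ingredient.

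Granting the balance property, the remainder is a routine two-line estimate. Set $Y\isdef(FX)_J=f^{(J)}\big(X_{\Neighb(J)}\big)$. Since $Y$ is a deterministic function of $X_{\Neighb(J)}$, the chain rule gives
\begin{align}
	H(Y) &= H\big(X_{\Neighb(J)}\big) - H\big(X_{\Neighb(J)}\,\big|\,Y\big) \;.
\end{align}
Given $Y=y$, the variable $X_{\Neighb(J)}$ is supported on the fibre $(f^{(J)})^{-1}(y)$, which has at most $\abs{S}^{\,r-\ell}$ elements, so $\widehat{H}\big(X_{\Neighb(J)}\,\big|\,Y=y\big)\leq (r-\ell)\log\abs{S}$ and hence $H\big(X_{\Neighb(J)}\,\big|\,Y\big)\leq (r-\ell)\log\abs{S}$. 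On the other hand, $J\subseteq\Neighb(J)$ makes $X_J$ a function of $X_{\Neighb(J)}$, so $H\big(X_{\Neighb(J)}\big)\geq H(X_J)$. Combining the two bounds gives
\begin{align}
	H\big((FX)_J\big) = H(Y) \geq H(X_J) - (r-\ell)\log\abs{S} \;,
\end{align}
and the claim holds with $c\isdef (r-\ell)\log\abs{S}$, a constant depending only on $F$.

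The only genuine obstacle is the balance property itself; every other step is bookkeeping. I expect to invoke the Garden-of-Eden theorem as the crucial black box, and the care needed is to state it in the correct form (a constant, $\abs{J}$-independent fibre count) and to justify the reduction to an interval neighbourhood containing the origin, so that $J\subseteq\Neighb(J)$ and the uniform fibre bound hold simultaneously.
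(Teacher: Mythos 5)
Your proof is correct, but it takes a genuinely different route from the paper's. The paper also normalises the neighbourhood to an interval and invokes the Garden-of-Eden theorem, but it uses the \emph{pre-injectivity} half of that theorem rather than the balance property: from pre-injectivity it deduces that $x_J$ is uniquely determined by the triple $\big((Fx)_J,\,(Fx)_{\Neighb(J)\setminus J},\,x_{\Neighb^2(J)\setminus J}\big)$, and then bounds $H(X_J)\leq H\big((FX)_J\big)+6r\log\abs{S}$ by pricing the two boundary terms at full entropy. You instead use the quantitative form of surjectivity --- Hedlund's balance theorem, asserting that every fibre of the block map $S^{\Neighb(J)}\to S^J$ has exactly $\abs{S}^{\,r-\ell}$ elements --- and run the identity $H(Y)=H\big(X_{\Neighb(J)}\big)-H\big(X_{\Neighb(J)}\,|\,Y\big)$ directly. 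Your remark that plain surjectivity only controls the \emph{average} fibre size, so that the balance theorem is genuinely needed for the uniform bound, is exactly the right point of care; the reduction to a neighbourhood containing $0$ is harmless for the fibre count, since enlarging the window multiplies every fibre by the same free factor. Your route yields a shorter computation and the slightly better constant $c=(r-\ell)\log\abs{S}$ (versus the paper's $6r\log\abs{S}$ for a symmetric radius-$r$ neighbourhood); the paper's route needs only the qualitative pre-injectivity statement and is phrased in the form it reuses in its higher-dimensional remark, but since the balance theorem also holds in higher dimensions your argument would generalise just as well, with $c(J)=\big(\abs{\Neighb(J)}-\abs{J}\big)\log\abs{S}=o(\abs{J})$ for hypercubic $J$.
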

\begin{proof}
	Without loss of generality, we may assume that the neighbourhood of the local rule of $F$ is of the form $\Neighb\isdef\{-r,-r+1,\ldots,r\}$.
%	Let $\Neighb$ denote the the neighbourhood of the local rule of a CA $F$ and define $\widetilde{\Neighb}\isdef-\Neighb+\Neighb=\{-a+b: a,b\in\Neighb\}$.
	We write $\partial\Neighb(J)\isdef\Neighb(J)\setminus J$ for the external \emph{boundary} of a set $J\subseteq\ZZ$ with respect to $\Neighb$.  Similarly, we write $\partial\Neighb^2(J)\isdef\Neighb^2(J)\setminus J$.
	
	Let $x\in\X$ be an arbitrary configuration.
	For an interval $J$, the pattern $(F(x))_J$ is uniquely determined by the patterns $x_J$ and $x_{\partial\Neighb(J)}$.
	Conversely, since by the Garden-of-Eden theorem (see e.g.~\cite[Theorem~5.3.1]{CecCoo10}), every surjective CA is pre-injective, the pattern $x_J$ is uniquely determined by the patterns $(F(x))_J$, $(F(x))_{\partial\Neighb(J)}$ and $x_{\partial\Neighb^2(J)}$.
	
	To see the latter, let $y$ be any configuration such that $y_J\not=x_J$ and $y_{\partial\Neighb^2(J)}=x_{\partial\Neighb^2(J)}$.
	Define a configuration $y'$ that agrees with $y$ on $\Neighb^2(J)$ and with $x$ outside $J$.
	Then $x$ and $y'$ are asymptotic to each other.
	Since $x$ and $y$ disagree on $J$, so do $x$ and $y'$.
	By pre-injectivity, $F(x)$ and $F(y')$ must be different from each other.
	Since $x$ and $y'$ disagree only on $J$, $F(x)$ and $F(y')$ can only disagree on $\Neighb(J)$.
	On the other hand, $F(y)$ and $F(y')$ agree on $\Neighb(J)$.
	Therefore, $F(x)$ and $F(y)$ must disagree on $\Neighb(J)=J\cup\partial\Neighb(J)$.
		
	Now consider the random configuration $X$.
	Since $X_J$ is uniquely determined by $(FX)_J$, $(FX)_{\partial\Neighb(J)}$ and $X_{\partial\Neighb^2(J)}$, we have the inequality
	\begin{align}
		H(X_J) &\leq
			H\left((FX)_J,(FX)_{\partial\Neighb(J)},X_{\partial\Neighb^2(J)}\right) \\
		&= H\left((FX)_J\right)
			+ H\left((FX)_{\partial\Neighb(J)},X_{\partial\Neighb^2(J)}\,|\, (FX)_J\right)
	\end{align}
	for the entropy.
	Since $\abs{\partial\Neighb(J)}=2r$ and $\abs{\partial\Neighb^2(J)}=4r$, the second term on the right-hand side is bounded from above by $6r\log\abs{S}$.  Therefore,
	\begin{align}
		H((FX)_J) &\geq H(X_J) - c
	\end{align}
	with $c\isdef 6r\log\abs{S}$.
\end{proof}

\begin{remark}
	The same argument is used in~\cite{KarTaa15} to show that $h(F\mu)=h(\mu)$ for every shift-invariant measure $\mu$ on $\X$.
Indeed, for a random configuration $X$ with distribution $\mu$ one has
	\begin{align}
		h(\mu) &=
			\lim_{n\to\infty}\frac{H\left(X_{[-n,n]}\right)}{2n+1}
		\leq
			\lim_{n\to\infty}\frac{H\left((FX)_{[-n,n]}\right)+c}{2n+1}
		=
			h(F\mu) \;.
	\end{align}
	The opposite inequality is true in general.
	\hfill\remarkqed
\end{remark}

\subsection{The effect of noise on entropy}

Lemma~\ref{lem:surjective:entropy} says that a one-dimensional surjective CA reduces the entropy of
a finite window by at most a constant $c$, uniformly on the size of the window.
We now show that if the window is large,
the extra entropy added by the noise is large enough
to compensate the lost entropy, at least if the entropy
of the window is not too close to maximal.
We divide the argument into a few lemmas.
%Recall that $q$ is the single-site marginal distribution of the noise
%and is assumed to be strictly positive.

Recall that in order to describe an additive noise, we identify the alphabet $S$ with a finite Abelian group $(\GG,+)$.  Under an additive noise, each symbol $a$ is replaced with a symbol $a+N$, where $N$ is $\GG$-valued random variable.  The noise variables at different sites are independent and all have distribution~$q$.  We are assuming that the noise is positive, hence $q(b)>0$ for each $b\in S$.
We denote by $\hmax\isdef\log\abs{S}$ the maximum possible entropy carried by a single site.

\begin{lemma}
\label{lem:sum:entropy:observation}
	For every $\varepsilon>0$, there is a $\delta(\varepsilon)>0$ with the following property.
	If $A$ and $N$ are independent $\GG$-valued random variables and $N$ is distributed according to $q$, then
	\begin{align}
		H(A)\leq \hmax - \varepsilon
			\qquad\Longrightarrow\qquad
		H(A+N)\geq H(A)+\delta(\varepsilon) \;.
	\end{align}
	The inequality $H(A+N)\geq H(A)$ holds in general as long as $A$ and $N$ are independent.
\end{lemma}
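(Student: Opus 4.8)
The plan is to reinterpret the entropy gap $H(A+N)-H(A)$ as a mutual information, identify exactly when it vanishes, and then convert the resulting pointwise strict positivity into a uniform lower bound by a compactness argument.

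First I would establish the general inequality. Writing $B\isdef A+N$ and conditioning on $N$, independence of $A$ and $N$ together with the fact that adding the constant $n$ is a bijection of $\GG$ gives $H(B\mid N)=\sum_{n}q(n)H(A+n)=H(A)$. Since conditioning does not increase entropy,
\begin{align}
	H(A+N)-H(A) &= H(B)-H(B\mid N) = I(B;N) \geq 0 \;,
\end{align}
which is exactly the claimed general inequality. This identity is the crux of the argument: the quantity we must bound from below is precisely the mutual information $I(A+N;N)$.

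Next I would characterize the equality case. The gap $I(B;N)$ vanishes if and only if $B=A+N$ is independent of $N$. For each $n$ in the support of $q$ — which, by positivity of the noise, is all of $\GG$ — the conditional law of $B$ given $N=n$ is the translate $g\mapsto p(g-n)$ of the law $p$ of $A$. Independence of $B$ and $N$ forces all these translates to coincide, so (taking $n'=0$) we get $p(g-n)=p(g)$ for every $g,n\in\GG$; evaluating at $g=n$ yields $p(0)=p(n)$ for all $n$, so $p$ is constant, i.e.\ $A$ is uniformly distributed on $\GG$. Conversely, if $A$ is uniform then so is $A+N$ and the gap is $0$. Hence $H(A+N)=H(A)$ holds if and only if $H(A)=\hmax$.

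Finally I would run the compactness argument to extract $\delta(\varepsilon)$. The gap depends only on $p$, the distribution of $A$, through the function $\Delta(p)\isdef H(p*q)-H(p)$ on the simplex of probability vectors on $\GG$, where $*$ denotes convolution; this function is continuous because convolution with the fixed vector $q$ is linear and $H$ is continuous on the (compact) simplex. The sublevel set $\{p:H(p)\leq\hmax-\varepsilon\}$ is closed, hence compact, and on it $\Delta$ is continuous and, by the previous step, strictly positive (its vanishing would force $H(p)=\hmax>\hmax-\varepsilon$). Therefore $\Delta$ attains a minimum $\delta(\varepsilon)>0$ there, which is the required constant (depending on $\varepsilon$, the group, and $q$, all of which are fixed). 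The only real obstacle is precisely this passage from ``strictly positive at each admissible $p$'' to ``uniformly bounded below by one constant'': this is exactly what compactness of the sublevel set buys us, and it explains why the hypothesis is stated as $H(A)\leq\hmax-\varepsilon$ rather than merely $H(A)<\hmax$.
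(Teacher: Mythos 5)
Your proposal is correct and follows essentially the same route as the paper: identify the entropy gap as the mutual information $I(A+N;N)$, observe that it vanishes only when $A$ is uniform, and extract a uniform $\delta(\varepsilon)$ by continuity of $p\mapsto H(p*q)-H(p)$ and compactness of the sublevel set $\{p: H(p)\leq\hmax-\varepsilon\}$. You merely spell out two steps the paper leaves implicit --- the translation-invariance argument for the equality case (which correctly uses the positivity of $q$) and the explicit compactness extraction --- so there is nothing to change.
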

\begin{proof}
The entropy of a $\GG$-valued random variable $A$ and its noisy version $A+N$
(where $N$ is independent of $A$) are related in the following way:
\begin{align}
	H(N,A+N) &= H(N) + H(A+N\,|\, N) \;, \\
	H(N,A+N) &= H(A+N) + H(N\,|\,A+N)\;.
\end{align}
Since $A$ and $N$ are independent, we have $H(A+N\,|\,N)=H(A)$.  It follows that
\begin{align}
	H(A+N) &= H(A) + \underbrace{H(N)-H(N\,|\,A+N)}_{I(N;A+N)} \;.
\end{align}
The mutual information $I(N;A+N)$ is non-negative and takes value $0$ if and only if $N$ and $A+N$ are independent, which happens if and only if $A$ is uniform on $\GG$, that is to say $H(A)=\hmax$.

The claim follows from the continuity of entropy and convolution and the compactness of the set of probability measures on $\GG$.
\end{proof}

\begin{lemma}
\label{lem:sum:entropy:conditional}
	For every $\varepsilon>0$, there is a $\rho(\varepsilon)>0$ with the following property.
%	If $A$ and $N$ are independent $\GG$-valued random variables with $N\sim q$,
%	and if $C$ is any other random variable on the same probability space, then
%	Let $A$ and $N$ be independent $\GG$-valued random variables where $N$ is distributed according to $q$, and let $C$ be another random variable that is independent of $N$.
	Let $A$ and $N$ be $\GG$-valued random variables and $C$ another random variable.
	Suppose that $N$ is distributed according to $q$, and is independent of $A$ and $C$.
	Then,
	\begin{align}
		H(A\,|\,C)\leq \hmax - \varepsilon
			\qquad\Longrightarrow\qquad
		H(A+N\,|\,C)\geq H(A\,|\,C)+\rho(\varepsilon) \;.
	\end{align}
	The inequality $H(A+N\,|\,C)\geq H(A\,|\,C)$ holds in general
	as long as $A$ and $N$ are independent conditioned on $C$.
\end{lemma}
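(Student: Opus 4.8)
The plan is to \emph{disintegrate} over the values of $C$ and apply Lemma~\ref{lem:sum:entropy:observation} pointwise. Since $N$ is independent of the pair $(A,C)$, conditioning on $C=c$ leaves $N$ distributed according to $q$ and independent of $A$. Hence for \emph{every} value $c$ of $C$, the unconditional statement applies to the conditional law of $A$ given $C=c$: writing $\widehat{H}(A\mid C=c)$ for the entropy of this conditional law, Lemma~\ref{lem:sum:entropy:observation} gives $\widehat{H}(A+N\mid C=c)\geq\widehat{H}(A\mid C=c)$ always, and $\widehat{H}(A+N\mid C=c)\geq\widehat{H}(A\mid C=c)+\delta(\eta)$ whenever $\widehat{H}(A\mid C=c)\leq\hmax-\eta$. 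Taking expectations over $C$ and using $H(\,\cdot\mid C)=\xExp[\widehat{H}(\,\cdot\mid C)]$ immediately yields the general inequality $H(A+N\mid C)\geq H(A\mid C)$.

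To obtain the strict gain $\rho(\varepsilon)$, I would show that a non-negligible fraction of the mass of $C$ sees a conditional entropy of $A$ bounded away from $\hmax$. Set $D\isdef\hmax-\widehat{H}(A\mid C)$, a nonnegative random variable (since $\widehat{H}(A\mid C=c)\leq\hmax$) bounded above by $\hmax$ and with mean $\xExp[D]=\hmax-H(A\mid C)\geq\varepsilon$ under the hypothesis. Splitting the expectation according to whether $D<\varepsilon/2$ or $D\geq\varepsilon/2$, a reverse-Markov estimate gives
\begin{align}
	\varepsilon\leq\xExp[D]
		&\leq\frac{\varepsilon}{2}+\hmax\cdot\xPr\big(D\geq\varepsilon/2\big) \;,
\end{align}
so that $\xPr(D\geq\varepsilon/2)\geq\varepsilon/(2\hmax)$.

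On the event $\{D\geq\varepsilon/2\}=\{\widehat{H}(A\mid C)\leq\hmax-\varepsilon/2\}$, the second part of Lemma~\ref{lem:sum:entropy:observation} (applied with $\eta=\varepsilon/2$) gives $\widehat{H}(A+N\mid C)-\widehat{H}(A\mid C)\geq\delta(\varepsilon/2)$, while this difference is nonnegative everywhere else. Therefore
\begin{align}
	H(A+N\mid C)-H(A\mid C)
		&= \xExp\big[\widehat{H}(A+N\mid C)-\widehat{H}(A\mid C)\big] \nonumber \\
		&\geq \delta(\varepsilon/2)\cdot\xPr\big(D\geq\varepsilon/2\big)
		\geq \frac{\varepsilon\,\delta(\varepsilon/2)}{2\hmax} \;,
\end{align}
and the claim follows with $\rho(\varepsilon)\isdef\varepsilon\,\delta(\varepsilon/2)/(2\hmax)$ (assuming $\abs{S}\geq 2$ so that $\hmax>0$; the case $\abs{S}=1$ is vacuous).

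The main obstacle is precisely that the hypothesis $H(A\mid C)\leq\hmax-\varepsilon$ controls only the \emph{average} conditional entropy, whereas Lemma~\ref{lem:sum:entropy:observation} is a pointwise statement: individual values of $C$ could render the conditional law of $A$ nearly uniform, contributing essentially no entropy gain. The reverse-Markov estimate above is what guarantees that a set of $C$-values of probability at least $\varepsilon/(2\hmax)$ stays a definite distance $\varepsilon/2$ below $\hmax$, and it is on this set that the pointwise gain $\delta(\varepsilon/2)$ can be harvested. Everything else is a routine application of the chain rule and the tower property for conditional entropy.
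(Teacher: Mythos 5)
Your proposal is correct and follows essentially the same route as the paper's proof: disintegrate over $C$, apply the unconditional Lemma~\ref{lem:sum:entropy:observation} pointwise, and use a Markov-type estimate to show that a set of $C$-values of probability at least of order $\varepsilon/\hmax$ has conditional entropy at most $\hmax-\varepsilon/2$, harvesting the gain $\delta(\varepsilon/2)$ there. The only cosmetic difference is that you apply the reverse-Markov bound to $D=\hmax-\widehat{H}(A\,|\,C)$ while the paper applies Markov's inequality to $\widehat{H}(A\,|\,C)$ itself, yielding the marginally different constant $\rho(\varepsilon)=\varepsilon\,\delta(\varepsilon/2)/(2\hmax)$ in place of $[\varepsilon/(2\hmax-\varepsilon)]\,\delta(\varepsilon/2)$; both are valid.
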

\begin{proof}
	For each $\varepsilon>0$, denote $\delta(\varepsilon)$ the number whose existence is guaranteed by Lemma~\ref{lem:sum:entropy:observation}.
	Lemma~\ref{lem:sum:entropy:observation} immediately
	gives a corresponding almost sure statement about the entropy of
	conditional distributions $\widehat{H}(A\,|\,C)$ and $\widehat{H}(A+N\,|\,C)$.
	%conditioned on any random variable $C$ that is independent of $N$.
%	Namely, if $A$ and $N$ are independent $\GG$-valued random variables with $N\sim q$,
%	and $C$ is another random variable that is independent of $N$, then
	Namely, if conditioned on $C$, the random variables $A$ and $N$ are independent
	and $N$ has distribution $q$, then
	\begin{align}
	\label{eq:sum:entropy:conditional:proof:start}
		\widehat{H}(A\,|\,C)\leq\hmax-\varepsilon
		\qquad\Longrightarrow\qquad
		\widehat{H}(A+N\,|\,C)-\widehat{H}(A\,|\,C)\geq\delta(\varepsilon)
	\end{align}
	with probability $1$.
	(In the proof of Theorem~\ref{thm:surj:ergodicity:shift-invariant}, we will only need Lemma~\ref{lem:sum:entropy:conditional} in situations where $C$ is a discrete variable and the conditional distributions are elementary.)
	
	Now, suppose that
	\begin{align}
		\xExp\big[\widehat{H}(A\,|\,C)\big]
			&= H(A\,|\,C) \leq \hmax-\varepsilon \;.
	\end{align}
	Using Markov's inequality, we get
	\begin{align}
		\xPr\left(\widehat{H}(A\,|\,C) \geq \hmax-\nicefrac{\varepsilon}{2}\right)
			&\leq \frac{\xExp\big[\widehat{H}(A\,|\,C)\big]}{\hmax-\nicefrac{\varepsilon}{2}}
			\leq \frac{\hmax-\varepsilon}{\hmax-\nicefrac{\varepsilon}{2}} < 1 \;.
	\end{align}
	%($\frac{\varepsilon}{2}$ is an arbitrary choice between $0$ and $\varepsilon$.)
	Therefore,
	\begin{align}
		\xPr\left(\widehat{H}(A\,|\,C) < \hmax-\nicefrac{\varepsilon}{2}\right)
			&\geq \frac{\nicefrac{\varepsilon}{2}}{\hmax-\nicefrac{\varepsilon}{2}} > 0 \;,
	\end{align}
	that is, with probability at least $\varepsilon/(2\hmax-\varepsilon)$, we have $\widehat{H}(A\,|\,C) < \hmax-\nicefrac{\varepsilon}{2}$.
	Hence, with probability at least $\varepsilon/(2\hmax-\varepsilon)$, we have
	\begin{align}
		\widehat{H}(A+N\,|\,C) - \widehat{H}(A\,|\,C)
			&\geq \delta(\nicefrac{\varepsilon}{2}) > 0\;.
	\end{align}
%	On the other hand, using the non-negativity of mutual information as in Lemma~\ref{lem:sum:entropy:observation}, we know that $\widehat{H}(A+N\,|\,C) - \widehat{H}(A\,|\,C)\geq 0$ with probability~$1$.
%	On the other hand, from~\eqref{eq:sum:entropy:conditional:proof:start} we know that $\widehat{H}(A+N\,|\,C) - \widehat{H}(A\,|\,C)$ is almost surely non-negative.
	Taking expectation and using the non-negativity of $\widehat{H}(A+N\,|\,C) - \widehat{H}(A\,|\,C)$, we get
	\begin{align}
		H(A+N\,|\,C)-H(A\,|\,C) &\geq
			\frac{\varepsilon}{2\hmax-\varepsilon}\delta(\nicefrac{\varepsilon}{2}) > 0\;,
	\end{align}
	which proves the claim with
	$\rho(\varepsilon)\isdef[\varepsilon/(2\hmax-\varepsilon)]\delta(\nicefrac{\varepsilon}{2})$.
\end{proof}

\begin{lemma}
\label{lem:sum:entropy:vector}
	Let $s>0$.
	For every $\varepsilon>0$, there exists an integer $n_0>0$ such that for all $n\geq n_0$,
	if $A_1,A_2,\ldots, A_n$ are $\GG$-valued random variables
	and $N_1,N_2,\ldots, N_n$ are i.i.d.\ $\GG$-valued random variables
	with distribution $q$ and independent of $A_1,A_2,\ldots,A_n$, then
	\begin{align}
		H(\underline{A}) &\leq n(\hmax-\varepsilon)
			\qquad\Longrightarrow\qquad
		H(\underline{A}+\underline{N}) \geq H(\underline{A}) + s \;,
	\end{align}
	where $\underline{A}\isdef (A_1,A_2,\ldots,A_n)$ and $\underline{N}\isdef(N_1,N_2,\ldots,N_n)$ for brevity.
	The inequality $H(\underline{A}+\underline{N}) \geq H(\underline{A})$
	holds in general as long as $\underline{A}$ and $\underline{N}$ are independent.
\end{lemma}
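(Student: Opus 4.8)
The plan is to compare $H(\underline{A}+\underline{N})$ with $H(\underline{A})$ by interpolating one coordinate at a time and applying Lemma~\ref{lem:sum:entropy:conditional} at each step. Writing $B_j\isdef A_j+N_j$, I introduce the hybrid vectors
\begin{align}
	W^{(k)} &\isdef (A_1,\ldots,A_k,B_{k+1},\ldots,B_n) \qquad (0\leq k\leq n) \;,
\end{align}
so that $W^{(n)}=\underline{A}$ and $W^{(0)}=\underline{A}+\underline{N}$. Consecutive hybrids differ only in coordinate $k$, the common part being $D_k\isdef(A_1,\ldots,A_{k-1},B_{k+1},\ldots,B_n)$, and the chain rule gives
\begin{align}
	H(\underline{A}+\underline{N})-H(\underline{A})
		&= \sum_{k=1}^n\big[H(A_k+N_k\,|\,D_k)-H(A_k\,|\,D_k)\big] \;.
\end{align}
Because the $N_j$ are i.i.d.\ and independent of $\underline{A}$, the variable $N_k$ is independent of the pair $(A_k,D_k)$ (note that $D_k$ only involves $\underline{A}$ and the noise variables $N_{k+1},\ldots,N_n$); hence, conditioned on $D_k$, the variables $A_k$ and $N_k$ remain independent with $N_k\sim q$. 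The general part of Lemma~\ref{lem:sum:entropy:conditional} then makes every summand non-negative, which already establishes the unconditional inequality $H(\underline{A}+\underline{N})\geq H(\underline{A})$.

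Next I would control the conditional entropies $H(A_k\,|\,D_k)$ on average. Since $D_k$ contains $A_1,\ldots,A_{k-1}$, conditioning on the extra coordinates only decreases entropy, so $H(A_k\,|\,D_k)\leq H(A_k\,|\,A_1,\ldots,A_{k-1})$, and the chain rule for $\underline{A}$ yields
\begin{align}
	\sum_{k=1}^n H(A_k\,|\,D_k) &\leq \sum_{k=1}^n H(A_k\,|\,A_1,\ldots,A_{k-1}) = H(\underline{A}) \leq n(\hmax-\varepsilon) \;.
\end{align}
Calling an index $k$ \emph{good} if $H(A_k\,|\,D_k)\leq\hmax-\varepsilon/2$, a Markov-type count shows that the number of bad indices is at most $n(\hmax-\varepsilon)/(\hmax-\varepsilon/2)$, so at least $g\isdef n\varepsilon/(2\hmax-\varepsilon)$ indices are good. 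For each good $k$, the quantitative part of Lemma~\ref{lem:sum:entropy:conditional} (applied with $\varepsilon/2$) gives $H(A_k+N_k\,|\,D_k)-H(A_k\,|\,D_k)\geq\rho(\varepsilon/2)$.

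Combining the two ingredients, I obtain $H(\underline{A}+\underline{N})-H(\underline{A})\geq g\,\rho(\varepsilon/2)=\frac{n\varepsilon}{2\hmax-\varepsilon}\,\rho(\varepsilon/2)$, a bound linear in $n$ with a strictly positive slope depending only on $\varepsilon$ (through $\rho$). It then suffices to take $n_0$ larger than $s(2\hmax-\varepsilon)/(\varepsilon\,\rho(\varepsilon/2))$ to guarantee that the gain exceeds $s$ for all $n\geq n_0$. I do not expect a serious obstacle: the telescoping and the chain-rule bound are routine once Lemma~\ref{lem:sum:entropy:conditional} is available. The one point requiring care—and the only place where the hypotheses really enter—is the verification of the conditional independence $A_k\perp N_k\mid D_k$ with $N_k$ still distributed according to $q$; this is what lets the per-coordinate lemma be applied simultaneously along the interpolation, and it rests on $D_k$ not involving $N_k$, which is precisely why the hybrids are built by peeling coordinates from one end.
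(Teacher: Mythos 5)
Your proof is correct, and it follows the same overall strategy as the paper's: reduce the entropy gain to a sum of per-coordinate conditional-entropy increments, apply Lemma~\ref{lem:sum:entropy:conditional} to each, and count how many coordinates must have conditional entropy at most $\hmax-\nicefrac{\varepsilon}{2}$. The decomposition itself differs, though. The paper expands both $H(\underline{A})$ and $H(\underline{A}+\underline{N})$ by the chain rule conditioning only on the \emph{clean} past, using $H(A_i+N_i\mid A_1+N_1,\ldots,A_{i-1}+N_{i-1})\geq H(A_i+N_i\mid A_1,\ldots,A_{i-1})$ to get a one-sided bound; you telescope through the hybrids $W^{(k)}$, which yields an exact identity at the price of conditioning on the mixed tuple $D_k$ (clean past, noisy future). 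This forces you to (i) verify the conditional independence of $A_k$ and $N_k$ given $D_k$ with $N_k\sim q$ --- which you do correctly, since $N_k$ is independent of the whole tuple $(\underline{A},N_{k+1},\ldots,N_n)$ of which $(A_k,D_k)$ is a function --- and (ii) insert the extra monotonicity step $H(A_k\mid D_k)\leq H(A_k\mid A_1,\ldots,A_{k-1})$ before summing, neither of which the paper needs. In exchange, your Markov-type count gives a gain growing linearly in $n$, namely at least $\frac{n\varepsilon}{2\hmax-\varepsilon}\rho(\nicefrac{\varepsilon}{2})$, slightly stronger than the paper's pigeonhole bound of $k\rho(\nicefrac{\varepsilon}{2})\geq s$ for a fixed $k$; both of course suffice for the statement, and your explicit $n_0$ is fine.
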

\begin{proof}
	We have
	\begin{align}
		H(A_1,A_2) &= H(A_1) + H(A_2\,|\,A_1) \;, \\
	    H(A_1+N_1,A_2+N_2) &= H(A_1+N_1) + H(A_2+N_2\,|\,A_1+N_1) \;.
	\end{align}
	Since conditioning on more information does not increase the entropy, we have
	\begin{align}
		H(A_2+N_2\,|\,A_1+N_1) &\geq H(A_2+N_2\,|\,A_1,N_1) = H(A_2+N_2\,|\,A_1)
	\end{align}
	where the last equality is by the independence of $N_1$ and $A_2+N_2$.
	In a similar fashion, we obtain
	\begin{align}
		H(\underline{A}) &= H(A_1) + H(A_2\,|\,A_1) + \cdots + H(A_n\,|\,A_1,\ldots,A_{n-1}) \\
		H(\underline{A}+\underline{N}) &\geq H(A_1+N_1) + H(A_2+N_2\,|\,A_1) + \cdots \nonumber\\
			&{} \qquad\qquad + H(A_n+N_n\,|\,A_1,\ldots,A_{n-1}) \;.
	\end{align}
	Hence,
	\begin{align}
	\label{eq:decomposition}
		H(\underline{A}+\underline{N}) - H(\underline{A}) &\geq
			\sum_{i=1}^n \big[H(A_i+N_i\,|\,A_1,\ldots,A_{i-1})-H(A_i\,|\,A_1,\ldots,A_{i-1})\big] \;.
	\end{align}
	
	%Observe that conditioned on $A_1,\ldots,A_{i-1}$, the random variables $A_i$ and $N_i$ are independent.
	Choose $k$ large enough so that $k\rho(\nicefrac{\varepsilon}{2})\geq s$, where $\rho(\cdot)$ is as in Lemma~\ref{lem:sum:entropy:conditional}.  Take $n_0$ large enough so that $(\nicefrac{\varepsilon}{2})n_0\geq(k-1)\big(\hmax-\nicefrac{\varepsilon}{2}\big)$. Let $n\geq n_0$ and assume that $H(\underline{A}) \leq n(\hmax-\varepsilon)$.
	By the pigeonhole principle, there must be $k$ distinct indices $1\leq i_1,i_2,\ldots,i_k\leq n$ such that
	\begin{align}
		H(A_{i_\ell}\,|\,A_1,\ldots,A_{i_\ell-1}) &\leq \hmax-\nicefrac{\varepsilon}{2} \;.
	\end{align}
	Indeed, if this is not the case, there can exist at most $k-1$ indices $i\in\{1,2,\ldots,n\}$ for which $H(A_{i}\,|\,A_1,\ldots,A_{i-1}) \leq \hmax-\nicefrac{\varepsilon}{2}$, hence
	\begin{align}
		H(A) = \mathop{\smash[b]{\sum_{i=1}^n}} H(A_i\,|\,A_1,\ldots,A_{i-1})
			&> (n-k+1)\big(\hmax-\nicefrac{\varepsilon}{2}\big) \\
			&\geq n(\hmax-\varepsilon)+(\nicefrac{\varepsilon}{2})n-(k-1)\big(\hmax-\nicefrac{\varepsilon}{2}\big) \\
			&\geq n(\hmax-\varepsilon) \;,
	\end{align}
	which contradicts the assumption $H(\underline{A}) \leq n(\hmax-\varepsilon)$.
	
	By Lemma~\ref{lem:sum:entropy:conditional}, for each of these $k$ indices we have
	\begin{align}
		H(A_{i_\ell}+N_{i_\ell}\,|\,A_1,\ldots,A_{i_\ell-1})-H(A_{i_\ell}\,|\,A_1,\ldots,A_{i_\ell-1})
			&\geq \rho(\nicefrac{\varepsilon}{2}).
	\end{align}
	%In particular, using the non-negativity of mutual information, we get
	Since all the other terms in~\eqref{eq:decomposition} are non-negative, we get
	\begin{align}
		H(\underline{A}+\underline{N}) - H(\underline{A})
			&\geq k\rho(\nicefrac{\varepsilon}{2}) \geq s\;. \\
		& \qedhere
	\end{align}
\end{proof}

\subsection{Proof of Theorem~\ref{thm:surj:ergodicity:shift-invariant}}
\label{sec:surj:proof}

\begin{proof}[Proof of Theorem~\ref{thm:surj:ergodicity:shift-invariant}]
	For clarity, we focus on the one-dimensional case.
	See Remark~\ref{rem:entropy:higher-dimensional} for the general case.

	Let $\pi$ be an accumulation point of the measure orbit
	%$\pi=\pi(0)\to\pi(1)\to\pi(2)\to\cdots$ of the noisy CA $F_\nu$
	$\mu\to\mu\Phi\to\mu\Phi^t\to\cdots$ starting from a shift-invariant measure $\mu$.
	We show that $\pi$ is the uniform Bernoulli measure.
	In order to do that, we show that $h(\pi)\geq\hmax-\varepsilon$ for every $\varepsilon>0$, and use the fact that the uniform Bernoulli measure is the only shift-invariant measure with entropy $\hmax$.
	
	To be specific, let us use the following construction of a trajectory of the noisy CA with initial distribution $\mu$.
	Let $X^{(0)}$ be a configuration with distribution $\mu$.
	Let $Z^{(1)},Z^{(2)},\ldots$ be a sequence of independent random configurations independent of $X^{(0)}$, each distributed according to the product measure with marginal $q$ at each site.
	Construct $X(t)$ recursively by setting $X^{(t+1)}\isdef FX^{(t)}+Z^{(t+1)}$.
	
By Lemma~\ref{lem:surjective:entropy}, for every finite interval $J\subseteq\Z$ and every $t\in\NN$, we have
	\begin{align}
		H\left((FX^{(t)})_J\right) &\geq H(X^{(t)}_J)-c \;.
	\end{align}
%	If $J$ is sufficiently large, Lemma~\ref{lem:sum:entropy:vector}
%	says that
	Let $\varepsilon>0$.
	By Lemma~\ref{lem:sum:entropy:vector}, %for every $\varepsilon>0$,
	there is an $n_0>0$
	(corresponding to $s\leftarrow 2c$ and $\varepsilon$) %$\varepsilon\leftarrow\frac{\varepsilon}{2}$)
	such that for every finite interval $J\subseteq\Z$
	of size at least $n_0$ and every $t\in\NN$, either
	\begin{align}
		\label{eq:surjective:proof:option:1}
		H\big(X^{(t+1)}_J\big) &\geq H\big((FX^{(t)})_J\big)
			> \abs{J}(\hmax-\varepsilon) \\
	\shortintertext{or}
		\label{eq:surjective:proof:option:2}
		H\big(X^{(t+1)}_J\big)
			&\geq H\big((FX^{(t)})_J\big) + 2c \geq H(X^{(t)}_J)+c \;.
	\end{align}
	It follows that for every $t\geq(\abs{J}\cdot\hmax)/c$,
	\begin{align}
	\label{eq:surjective:proof:final-bound}
		H\big(X^{(t)}_J\big) &> \abs{J}(\hmax-\varepsilon)-c \;,
	\end{align}
	provided $\abs{J}\geq n_0$.
	Indeed, observe that once~\eqref{eq:surjective:proof:final-bound} is satisfied for some $t=t_0$, it remains satisfied for all $t\geq t_0$.  On the other hand, within $\big\lceil(\abs{J}\cdot\hmax)/c\big\rceil$ steps, inequality~\eqref{eq:surjective:proof:option:1} is bound to be satisfied at least once. 
	Letting $\abs{J}\to\infty$, we get
	\begin{align}
		h(\pi) &\geq
			\lim_{\abs{J}\to\infty}\,\liminf_{t\to\infty}\frac{H(X^{(t)}_J)}{\abs{J}}
		\geq
			\lim_{\abs{J}\to\infty}\frac{\abs{J}(\hmax-\varepsilon)-c}{\abs{J}}
		=
			\hmax - \varepsilon \;.
	\end{align}
	Since $\varepsilon>0$ is arbitrary, the claim follows.
\end{proof}

\begin{remark}\label{rem:entropy:higher-dimensional}
	For a $d$-dimensional surjective CA, Lemma~\ref{lem:surjective:entropy} remains true except that rather than a constant $c$,	we need a function $c(J)$ that is $o(\abs{J})$ (for hypercubic $J$) as $\abs{J}\to\infty$.
	More specifically, with $\Neighb\isdef[-r,r]^d\cap\ZZ^d$, the statement holds for $c(J)\isdef\left(\abs{\partial\Neighb(J)}+\abs{\partial\Neighb^2(J)}\right)\log\abs{S}$.
	The rest of the argument goes through in the same fashion.
	In fact, the theorem remains true if the lattice $\Z^d$ is replaced	with a countable amenable group.
	\hfill\remarkqed
\end{remark}

\begin{remark}
	The proof of Theorem~\ref{thm:surj:ergodicity:shift-invariant} can be adapted to encompass the broader scenario in which the noise is a (positive) permutation noise.  Indeed, Lemma~\ref{lem:sum:entropy:observation} remains true if the noise variable $N$ is a random permutation chosen according to a distribution $q$ and the sum $A+N$ is replaced with the application $N(A)$, provided that the distribution $q$ has the property that for every $a,a'\in S$, there is a permutation $\varsigma\in\SymGroup(S)$ with $q(\varsigma)>0$ such that $\varsigma(a')=a$.  The latter condition is easily seen to be equivalent to the condition that the noise is positive.  The adapted variants of Lemmas~\ref{lem:sum:entropy:conditional} and~\ref{lem:sum:entropy:vector} and the rest of the proof then follow similarly.
	\hfill\remarkqed
\end{remark}

\begin{remark}
	Applying the argument of Theorem~\ref{thm:surj:ergodicity:shift-invariant}
	to non-shift-invariant measures, we still get a weaker statement:
	every accumulation point of the orbit of the noisy CA has well-defined
	uniform entropy per site $\hmax$.
	More specifically, let $\Gamma_0$ denote the set of probability measures on $\X$
	(not necessarily shift-invariant)
	that have well-defined \emph{uniform} entropy per site $\hmax$, that is,
	the measures $\mu$ for which the limit
	\begin{align}
		\breve{h}(\mu) &\isdef \lim_{\abs{J}\to\infty}\frac{H(X_J)}{\abs{J}}
	\end{align}
	(for a random configuration $X\sim\mu$) exists and equals $\hmax$.
	The limit is taken over intervals.
	%Uniformity means the limit exists net-wise along the directed set of intervals preordered by their length.
	The argument of Theorem~\ref{thm:surj:ergodicity:shift-invariant} shows that
	the iterates of the noisy CA $\Phi$ on any probability measure $\mu$
	converge weakly to the set $\Gamma_0$.
%	
%	If we also manage to show that iterating $\Phi$ on a measure makes
%	the measure ``closer and closer'' to being shift-invariant,
%	we can conclude that $\Phi$ is indeed ergodic.
%	Namely, we need to show that the iterates of $\Phi$ on any measure $\mu$
%	also converge to the set of shift-invariant measures on $\X$.
%	In fact, it is sufficient to show this for $\mu\in\Gamma_0$.
	\hfill\remarkqed
\end{remark}

Let us conclude this section by giving an alternate proof of Theorem~\ref{thm-perm} in case the noise is additive.
For permutive CA under positive additive noise, the entropy argument can be easily formulated in terms of conditional entropy, hence providing convergence for every (not necessarily shift-invariant) measure.  The argument is however not entirely different from the Markov chain proof given in Section~\ref{sec:permutive}; the Markov chain interpretation is implicit in the following proof.

\begin{proof}[Alternate proof of Theorem~\ref{thm-perm} with additive noise]
	Let $F$ be a right-permutive CA with neighbourhood $\Neighb\isdef\{l,l+r,\ldots,r\}$.
	Let $X$ be a random configuration with arbitrary distribution and set $Y\isdef FX$.
	Then, for every $k\in\Z$,
	\begin{align}
		H(X_{k+r}\,|\, X_{(-\infty,k+r)}) &=
			H(Y_{k}\,|\, X_{(-\infty,k+r)}) \\
		&\leq
			H(Y_k\,|\, Y_{(-\infty,k)}) \;.
	\end{align}
	The first equality is by permutiveness, and the second inequality is by the fact that $Y_{(-\infty,k)}$ is a function of $X_{(-\infty,k+r)}$.

	Next, let $Z$ be a noise configuration independent of $X$, and distributed according to a product measure with marginal $q$ at each site.
	Then,
	\begin{align}
		H(Y_k + Z_k \,|\, Y_{(-\infty,k)} + Z_{(-\infty,k)}) &\geq
			H(Y_k + Z_k \,|\, Y_{(-\infty,k)}, Z_{(-\infty,k)}) \\
		&=
			H(Y_k + Z_k \,|\, Y_{(-\infty,k)}) \;,
	\end{align}
	where the last equality follows from the independence of $Z_{(-\infty,k)}$ and $Y_k+Z_k$.
	
	Combining these two with Lemma~\ref{lem:sum:entropy:conditional}, we get that for every $\varepsilon>0$,
	\begin{gather}
		H(X_{k+r}\,|\, X_{(-\infty,k+r)}) \leq \hmax-\varepsilon \\
			\Downarrow \nonumber \\
		H(Y_k + Z_k \,|\, Y_{(-\infty,k)} + Z_{(-\infty,k)})
			\geq \left(H(X_{k+r}\,|\, X_{(-\infty,k+r)}) + \rho(\varepsilon)\right) \land (\hmax-\varepsilon) \;.
	\end{gather}
	In particular, if $X^{(0)},X^{(1)},\ldots$ represents the evolution of the noisy CA, then
	\begin{align}
		H(X^{(t)}_k \,|\, X^{(t)}_{(-\infty,k)}) &\to \hmax
	\end{align}
	as $t\to\infty$, uniformly in $k$.  This implies convergence to the uniform Bernoulli measure of the distribution of $X^{(t)}$.
\end{proof}

\section{Fourier analysis method}
\label{sec:fourier}

In this section, we apply (generalized) Fourier analysis to establish ergodicity under noise of CA with certain algebraic properties.  %This method has been developed in Chapter 4 of~\cite{TooVasStaMitKurPir90}.
For clarity and brevity, we focus on two concrete examples (the XOR CA and the binary spreading CA) and prove ergodicity under zero-range noise.  Further development of this approach will be left to another paper.

Our exposition is based on Chapter 4 of the survey by Toom et al.~\cite{TooVasStaMitKurPir90}.
The idea is to show that the action of the PCA on local observables is ``contractive'' in an appropriate sense.  When the CA has an algebraic property (e.g., additive), it is sometimes possible to choose a basis for the space of observables (e.g., the Fourier basis) with respect to which the CA maps each basis element into another basis element.  Proving the ergodicity of the noisy CA would then be reduced to showing that the action of noise on the same basis is contractive.

\subsection{XOR CA with zero-range noise}
\label{sec:fourier:binary}

Let $S\isdef\{\symb{0},\symb{1}\}$ be the binary alphabet.  We identify $S$ with the cyclic group $\ZZ/2\ZZ$.
The \emph{XOR} CA with neighbourhood $\Neighb\subseteq\ZZ^d$ is identified with the map $x\mapsto Fx$ on $\X\isdef S^{\ZZ^d}$, where
\begin{align}
	(Fx)_i &\isdef \sum_{j\in \Neighb} x_{i+j} \pmod{2} \;.
\end{align}
We consider the PCA $\Phi$ obtained by combining $F$ with a zero-range noise kernel $\Theta$, identified by the matrix
\begin{align}
	\theta &\isdef
		\begin{pmatrix}
			1-p & p \\
			q	& 1-q
		\end{pmatrix},
\end{align}
which modifies each symbol independently according to transition probabilities $\symb{0}\xrightarrow{p}\symb{1}$ and $\symb{1}\xrightarrow{q}\symb{0}$.

Since $F$ is permutive, we already know (Theorem~\ref{thm-perm}) the ergodicity of the noisy version as long as the noise is positive and preserves the uniform distribution, that is, if $q=p\in(0,1)$.
In the case $q=p\in(0,1)$, the ergodicity also follows by a classic application of Fourier analysis (see~\cite[Example~1.3]{TooVasStaMitKurPir90}) or by coupling from the past (see~\cite[Sec.~5d]{Dur88}).
In this case, the convergence to the limit measure is super-exponentially fast (i.e., the probability of each cylinder set converges super-exponentially fast to its limit value). 
In the degenerate case, that is, when $p\in\{0,1\}$ or $q\in\{0,1\}$, Bramson and Neuhauser~\cite{BraNeu94} have proved that the system is not ergodic, at least in the one-dimensional case with $\Neighb=\{-1,0,1\}$.

Following \cite[Chap.~4]{TooVasStaMitKurPir90}, Fourier analysis can in fact be used to prove ergodicity in the entire domain $0<p,q<1$.

\begin{theorem}
\label{thm-xor}
	The XOR CA with positive zero-range noise is uniformly ergodic.
	Moreover, its unique invariant measure is spatially mixing.
\end{theorem}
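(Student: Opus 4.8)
The plan is to diagonalize the algebraic part of the dynamics in the Walsh--Fourier basis and to extract a genuine contraction from the noise, following the scheme of Toom et al. For a finite set $A\subseteq\ZZ^d$ write $\chi_A(x)\isdef(-1)^{\sum_{i\in A}x_i}$ for the associated Walsh character; these span the local observables, and a measure $\mu$ is determined by its coefficients $\hat\mu(A)\isdef\int\chi_A\,\dd\mu$, with $\hat\mu(\varnothing)=1$ for every probability measure. First I would record the action of the two ingredients. A direct computation shows that pulling back by $F$ permutes characters, $F\chi_A=\chi_{F^\ast A}$, where $F^\ast$ corresponds, in the group ring $\mathbb{F}_2[X_1^{\pm},\dots,X_d^{\pm}]$, to multiplication by the neighbourhood polynomial $N(X)\isdef\sum_{j\in\Neighb}X^j$ (each site of $A$ is replicated over $A+\Neighb$ and overlapping contributions cancel modulo~$2$); since that ring is an integral domain and $N\neq0$, the map $F^\ast$ sends nonempty sets to nonempty sets. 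For the noise, since $\Theta$ acts independently and identically at each site, one gets at a single site $\Theta\chi_{\{i\}}=(q-p)+\kappa\,\chi_{\{i\}}$ with $\kappa\isdef1-p-q$, and hence, by multiplying over the sites of $A$,
\[
\Theta\chi_A=\sum_{B\subseteq A}(q-p)^{\abs{A\setminus B}}\kappa^{\abs{B}}\chi_B .
\]

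Combining these through $\Phi h=F(\Theta h)$ gives $\widehat{\mu\Phi}(A)=\sum_{B\subseteq A}c_{A,B}\,\hat\mu(F^\ast B)$ with $c_{A,B}\isdef(q-p)^{\abs{A\setminus B}}\kappa^{\abs{B}}$. Let $\pi$ be any invariant measure (one exists by compactness) and set $g_t(A)\isdef\widehat{\mu\Phi^t}(A)-\hat\pi(A)$. Subtracting the fixed-point identity for $\pi$ yields the linear recursion $g_{t+1}(A)=\sum_{B\subseteq A}c_{A,B}\,g_t(F^\ast B)$. The decisive structural point is that conservation of mass forces $g_t(\varnothing)=0$ for all $t$, so the constant term never re-enters the recursion; writing $M_t\isdef\sup_{A\neq\varnothing}\abs{g_t(A)}$ and using $\abs{g_t(C)}\le M_t$ for every $C$, I would then bound
\[
\abs{g_{t+1}(A)}\le\sum_{B\subseteq A}\abs{c_{A,B}}\,M_t=\bigl(\abs{q-p}+\abs{\kappa}\bigr)^{\abs{A}}M_t\le r\,M_t,\qquad r\isdef\abs{q-p}+\abs{1-p-q},
\]
for every $A$ with $\abs{A}\ge1$, where the last step uses $r\le1$.

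The main obstacle — indeed the only inequality that genuinely uses positivity of the noise — is to check that $r<1$ throughout the open square $0<p,q<1$. This is a short case analysis: assuming $q\ge p$, if $p+q\le1$ then $r=(q-p)+(1-p-q)=1-2p<1$, while if $p+q\ge1$ then $r=(q-p)+(p+q-1)=2q-1<1$ (the case $p\ge q$ being symmetric), with strictness coming exactly from $p>0$ and $q<1$. Granting $r<1$, the recursion gives $M_t\le r^t M_0\le 2r^t\to0$, so each Fourier coefficient of $\mu\Phi^t$ converges to that of $\pi$; this forces $\mu\Phi^t\to\pi$ weakly for every initial $\mu$, and applying it to $\mu=\pi'$ for another invariant measure $\pi'$ gives uniqueness.

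Finally I would convert the coefficient bound into the window estimates needed downstream. Fourier inversion gives
\[
\nu([u])-\pi([u])=2^{-\abs{A}}\sum_{\varnothing\neq B\subseteq A}(-1)^{\sum_{i\in B}u_i}\bigl(\hat\nu(B)-\hat\pi(B)\bigr),
\]
so that $\norm{\mu\Phi^t-\pi}_A\le(2^{\abs{A}}-1)\,r^t$ uniformly in $\mu$. Since this bound depends on $A$ only through $\abs{A}$, it yields uniform ergodicity at once and, via Proposition~\ref{prop:spatial-mixing} with $\rho_n(t)\isdef(2^{n}-1)r^t$, the spatial mixing of the unique invariant measure.
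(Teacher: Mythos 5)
Your proposal is correct and takes essentially the same route as the paper: the same Walsh characters, the same identities $\chi_A\circ F=\chi_{F^\ast A}$ and $\theta\chi=(q-p)+(1-p-q)\chi$, and the same contraction rate $r=\abs{q-p}+\abs{1-p-q}<1$; you merely run the contraction on the sup-norm of the Fourier coefficients of measures (using $g_t(\varnothing)=0$) where the paper contracts the $\ell^1$-type seminorm $\snorm{\cdot}$ on observables, which is the same estimate in dual form. Your composition order $\Phi h=(\Theta h)\circ F$ is in fact the one consistent with the convention $\Phi(x,E)=\Theta(Fx,E)$ (the paper writes $\Theta(\chi_A\circ F)$, which only changes the exponent from $\abs{A}$ to $\abs{F^\ast A}$, not the bound $\rho$), and your window estimate $\norm{\mu\Phi^t-\pi}_A\leq(2^{\abs{A}}-1)r^t$ feeds into Proposition~\ref{prop:spatial-mixing} exactly as in the paper.
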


\begin{proof}
Define the function $\chi:\ZZ_2\to\CC$ by $\chi(a)\isdef(-1)^a$
(i.e., $\chi(\symb{0})\isdef 1$ and $\chi(\symb{1})\isdef -1$).
This is a \emph{character} of the group $\ZZ_2$
(i.e., a homomorphism into the multiplicative group of $\CC$),
and along with the constant $1$
(the trivial character), forms a basis for the two-dimensional space of functions $\ZZ_2\to\CC$.
For a finite set $A\subseteq\ZZ^d$, define $\chi_A:\X\to\CC$ by
\begin{align}
	\chi_A(x) &\isdef \prod_{i\in A}\chi(x_i) \;.
\end{align}
(In particular, $\chi_\varnothing\equiv 1$.)
The collection of all functions $\chi_A$ (for finite $A\subseteq\ZZ^d$)
is a basis (the \emph{Fourier basis}) for the linear space $C_0(\X)$,
which is orthonormal with respect to the inner product
$\langle g,h\rangle\isdef\pi(g\overline{h})$,
where $\overline{h}$ is the complex conjugate of $h$
and $\pi$ is the uniform Bernoulli measure on $\X$ (a.k.a.~the \emph{Haar measure}).

The basis $\{\chi_A:\text{$A\subseteq\ZZ^d$ finite}\}$
is particularly convenient, because the XOR CA $F$ maps each character $\chi_A$
into another character $\chi_{F^*A}$.
Namely,
\begin{align}
	\chi_A(Fx) &=
		\prod_{i\in A}\chi\big((Fx)_i\big) =
		\prod_{i\in A}\chi\Big(\sum_{\quad\mathclap{j\in i+\Neighb}\quad}x_j\Big) =
		\prod_{i\in A}\,\prod_{j\in i+\Neighb}\chi(x_j) =
		\chi_{F^*A}(x) \;,
\end{align}
%where $F^*A\isdef\big\{j\in\ZZ^d: \text{$\abs{\{i\in A: j\in i+\Neighb\}}$ is odd}\big\}$.
where $F^*A$ denotes the set of all $j\in\ZZ^d$ for which
the set $\{i\in A: j\in i+\Neighb\}$ has an odd number of elements.
(If we represent $A$ as a configuration $c:\ZZ^d\to\ZZ_2$
with $c_i=1$ if and only if $i\in A$, then $F^*A$ will be represented by
$F^*c$ where $(F^*c)_k\isdef \sum_{j\in \Neighb} c_{k-j} \pmod{2}$.)

%The multiplicative form of the characters allows us to
%calculate the effect of noise explicitly.
To calculate the effect of noise,
let $x$ be an arbitrary configuration and $Y$
a random configuration chosen according $\Theta(x,\cdot)$,
so that each $Y_i$ is obtained from $x_i$ independently at random
with transition probabilities prescribed by $\theta$.
We have
\begin{align}
	(\Theta\chi_A)(x) &= \xExp_x[\chi_A(Y)]
	=
		\xExp_x\Big[\prod_{i\in A}\chi(Y_i)\Big]
	=
		\prod_{i\in A}\xExp_x[\chi(Y_i)]
	=
		\prod_{i\in A} (\theta\chi)(x_i) \;.
\end{align}
Note how the multiplicative form of $\chi_A$ and the independence
of noise at different sites reduce
the calculation of $\Theta\chi_A$ to the calculation of $\theta\chi$.
For the latter, we have
\begin{align}
	(\theta\chi)(a) &=
		\begin{cases}
			1-2p	& \text{if $a=\symb{0}$,}\\
			2q-1	& \text{if $a=\symb{1}$,}
		\end{cases}
\end{align}
which can be written as the linear combination $\theta\chi=(q-p) + (1-p-q)\chi$.
It follows that
\begin{align}
	(\Theta\chi_A)(x) &=
		\prod_{i\in A}\big((q-p) + (1-p-q)\chi(x_i)\big) \\
	&=
		\sum_{I\subseteq A}
			(q-p)^{\abs{A\setminus I}}(1-p-q)^{\abs{I}}\chi_I(x) \;.
\end{align}

Combining the effect of the CA $F$ and the noise $\Theta$, we get
the representation
\begin{align}
	\Phi\chi_A = \Theta (\chi_A\oo F) &=
		\sum_{I\subseteq F^*A}
			(q-p)^{\abs{(F^*A)\setminus I}}(1-p-q)^{\abs{I}}\chi_I
\end{align}
in the Fourier basis.

In order to prove the ergodicity of a PCA $\Phi$, we show that for each local function $h\in C_0(\X)$, the sequence $\Phi^t h$ converges exponentially fast to a constant.
In particular, ergodicity follows if we are able to show that $\Phi$ contracts the non-constant component of $h$.
The non-constant part of $h$ can, for instance, be measured by
\begin{align}
	\snorm{h} &\isdef \sum_{\varnothing\neq A\subseteq\ZZ^d}\bigabs{\widehat{h}_A} \;,
\end{align}
where $h=\sum_{A\subseteq\ZZ^d}\widehat{h}_A\chi_A$ is the representation
of $h$ in the Fourier basis.
This is a semi-norm satisfying $\snorm{h}=0$ if and only if $h$ is constant.
Suppose that $\Phi$ is \emph{contractive} with respect to $\snorm{\cdot}$,
in the sense that there is a constant $0\leq\rho<1$ such that
$\snorm{\Phi h}\leq\rho\snorm{h}$ for all $h\in C_0(\X)$.
Then, $\snorm{\Phi^t h}\leq\snorm{h}\rho^t$ for every $h\in C_0(\X)$ and $t\geq 0$.
In particular,
\begin{align}
\label{eq:xor-noise:bound}
	\abs{\Phi^t(y,[u]) - \Phi^t(x,[u])} &\leq 2\snorm{\Phi^t \indicator{[u]}} \leq 2\snorm{\indicator{[u]}}\rho^t
\end{align}
for every cylinder set $[u]$, every two configurations $x,y\in\X$ and each $t\geq 0$.
Hence, we obtain the uniform ergodicity of $\Phi$.

In order to verify that $\Phi$ is contractive,
it is sufficient to verify that $\snorm{\Phi\chi_A}\leq\rho$
for each non-empty finite $A\subseteq\ZZ^d$.
Namely, for an arbitrary $h\in C_0(\X)$, the latter condition gives
\begin{align}
	\snorm{\Phi h} &=
		\biggsnorm{\sum_{A\subseteq\ZZ^d}\widehat{h}_A \Phi\chi_A}
	\leq
		\sum_{\varnothing\neq A\subseteq\ZZ^d}\bigabs{\widehat{h}_A}\snorm{\Phi\chi_A}
	\leq \rho \snorm{h} \;.
\end{align}

For the PCA $\Phi(x,E)\isdef \Theta(Fx,E)$, we have
\begin{align}
	\snorm{\Phi\chi_A} &=
		\biggsnorm{
			\sum_{I\subseteq F^*A}
			(q-p)^{\abs{(F^*A)\setminus I}}(1-p-q)^{\abs{I}}\chi_I
		} \\
	&=
		\sum_{\varnothing\neq I\subseteq F^*A}
			\abs{q-p}^{\abs{(F^*A)\setminus I}}\abs{1-p-q}^{\abs{I}} \\
	&=
		\left(\abs{q-p} + \abs{1-p-q}\right)^{\abs{F^*A}}
		- \abs{q-p}^{\abs{F^*A}} \;.
\end{align}
Note that $\rho\isdef\abs{q-p}+\abs{1-p-q}<1$ for $p,q\in(0,1)$.  Therefore,
$\snorm{\Phi\chi_A}\leq\rho$ for every finite $\varnothing\neq A\subseteq\ZZ^d$,
and the uniform ergodicity of $\Phi$ follows.

To see the spatial mixing of the unique invariant measure $\pi$ of $\Phi$, observe that for $u\in S^A$, we have $\snorm{\indicator{[u]}}=1-2^{-\abs{A}}\leq 1$, because
\begin{align}
	\indicator{[u]} &= \prod_{k\in A}\frac{1}{2}\big(1+\chi(u_k)\chi_k\big)
		= 2^{-\abs{A}}\sum_{B\subseteq A} \chi_B(u)\chi_B \;.
\end{align}
Integrating~\eqref{eq:xor-noise:bound} over $y$ with respect to $\pi$, we therefore get $\bigabs{\pi([u])-\Phi^t(x,[u])}\leq 2\rho^t$.  Now, using~\eqref{eq:total-variation:window}, we obtain that $d_A(t)\leq 2^{\abs{A}}\rho^t$ for every finite set $A\subseteq\ZZ^d$ and $t\geq 1$.
The spatial mixing of the invariant measure thus follows from Proposition~\ref{prop:spatial-mixing}.
\end{proof}

\begin{remark}
	Observe that $\snorm{\Phi\chi_A}<1$ even in the degenerate (but non-deterministic) case,
	for instance, when $p=0$ and $q\in(0,1)$.
	Namely, in the latter case we have $\snorm{\Phi\chi_A}=1-\abs{q-p}^{\abs{F^*A}}<1$.
	However, this is not sufficient for ergodicity, as the upper bound for $\snorm{\Phi\chi_A}$
	depends on $A$ and approaches $1$ as $A$ grows.
	\hspace*{\fill}\remarkqed
\end{remark}

\subsection{Binary spreading CA with zero-range noise}
\label{sec:mobius:binary}

Consider a non-constant CA $F$ with binary alphabet $S\isdef\{\symb{0},\symb{1}\}$ in which
$\symb{0}$ is \emph{spreading}.
Namely, $x\mapsto Fx$ is given by
\begin{align}
	%(Fx)_i &\isdef \bigwedge_{j\in \Neighb} x_{i+j}
	(Fx)_i &\isdef
		\begin{cases}
			\symb{0}	& \text{if $x_{i+j}=\symb{0}$ for some $j\in \Neighb$,} \\
			\symb{1}	& \text{otherwise,}
		\end{cases}
\end{align}
%where $\land$ denotes logical conjunction and $\Neighb\subseteq\ZZ^d$ is a finite set.
where $\Neighb\subseteq\ZZ^d$ is a finite set.
As in the case of the XOR CA, we consider a general zero-range noise kernel $\Theta$
defined by the transition matrix
\begin{align}
	\theta &\isdef
		\begin{pmatrix}
			1-p & p \\
			q	& 1-q
		\end{pmatrix}.
\end{align}

When $q=0$, we recover Stavskaya's PCA (a.k.a.\ directed site percolation), which is non-ergodic
for sufficiently small $p\geq 0$ (see~\cite[Chap.~1]{TooVasStaMitKurPir90}).
Using coupling arguments, we already know the ergodicity of
a CA with spreading symbol with either memoryless noise (Theorem~\ref{thm-spreading-zrmn}) or sufficiently weak positive perturbation (Theorem~\ref{thm-spreading}).
In the binary case, we get an alternative argument
via (generalized) Fourier analysis, covering most of the parameter space.

\begin{theorem}
\label{thm-spreading-bin}
	The binary CA with spreading $\symb{0}$
	combined with a zero-range noise with
	transition probabilities $\symb{0}\xrightarrow{p}\symb{1}$
	and $\symb{1}\xrightarrow{q}\symb{0}$
	is uniformly ergodic if $p+\abs{1-p-q}<1$.
	Moreover, under the same condition, the unique invariant measure of
	the system is spatially mixing.
\end{theorem}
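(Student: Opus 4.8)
The plan is to run the Fourier-analytic argument of Theorem~\ref{thm-xor} with the additive character $\chi$ replaced by a basis adapted to the \emph{multiplicative} structure of the spreading rule. I identify $S=\{\symb{0},\symb{1}\}$ with $\{0,1\}\subseteq\RR$ and, for each finite $A\subseteq\ZZ^d$, set $\psi_A(x)\isdef\prod_{i\in A}x_i$ (so $\psi_\varnothing\equiv 1$). Since every local observable is a multilinear polynomial in the coordinates (using $x_i^2=x_i$), the family $\{\psi_A\}$ is a basis of $C_0(\X)$, and I write each $h\in C_0(\X)$ uniquely as $h=\sum_A\widehat{h}_A\psi_A$. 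The decisive algebraic fact is that $F$ pulls this basis back into itself: because $\symb{0}$ is spreading, $(Fx)_i=\prod_{j\in\Neighb}x_{i+j}$ in the $\{0,1\}$-representation, whence
\begin{align}
	\psi_A(Fx) &= \prod_{i\in A}\prod_{j\in i+\Neighb}x_j = \prod_{j\in A+\Neighb}x_j = \psi_{A+\Neighb}(x) \;,
\end{align}
the repeated indices collapsing by idempotency; that is, $\psi_A\oo F=\psi_{A+\Neighb}$.

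Next I compute the effect of the noise. In the $\{0,1\}$-representation the coordinate function $\psi(a)=a$ obeys $\theta\psi=p+(1-p-q)\psi$, so, by independence of the noise at distinct sites, $(\Theta\psi_B)(z)=\prod_{i\in B}\big(p+(1-p-q)z_i\big)$ and hence
\begin{align}
	\Theta\psi_B &= \sum_{I\subseteq B} p^{\abs{B\setminus I}}(1-p-q)^{\abs{I}}\psi_I \;.
\end{align}
Combining this with the pullback formula exactly as in the XOR case yields the representation of the PCA, namely $\Phi\psi_A=\Theta(\psi_A\oo F)=\Theta\psi_{A+\Neighb}$, that is
\begin{align}
	\Phi\psi_A &= \sum_{I\subseteq A+\Neighb} p^{\abs{(A+\Neighb)\setminus I}}(1-p-q)^{\abs{I}}\psi_I \;.
\end{align}

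I then import the contraction machinery of Theorem~\ref{thm-xor} verbatim, using the seminorm $\snorm{h}\isdef\sum_{\varnothing\neq A}\abs{\widehat{h}_A}$, which vanishes precisely on constants. For nonempty $A$, summing the absolute values of the non-constant coefficients above gives, with $n\isdef\abs{A+\Neighb}\geq 1$,
\begin{align}
	\snorm{\Phi\psi_A} &= \sum_{\varnothing\neq I\subseteq A+\Neighb} p^{\abs{(A+\Neighb)\setminus I}}\abs{1-p-q}^{\abs{I}} = \rho^{n}-p^{n} \;,
\end{align}
where $\rho\isdef p+\abs{1-p-q}$. By hypothesis $\rho<1$, so $\snorm{\Phi\psi_A}\leq\rho^{n}\leq\rho$. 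Since $\Phi\psi_\varnothing=1$ contributes nothing, linearity and the triangle inequality give $\snorm{\Phi h}\leq\rho\snorm{h}$, hence $\snorm{\Phi^t\indicator{[u]}}\leq\rho^t\snorm{\indicator{[u]}}$. As $\abs{\psi_A(y)-\psi_A(x)}\leq 1$, this bounds $\abs{\Phi^t(y,[u])-\Phi^t(x,[u])}\leq\rho^t\snorm{\indicator{[u]}}$, and integrating $y$ against any invariant measure yields uniform ergodicity. For spatial mixing, expanding $\indicator{[u]}=\prod_{k\in A}\big(x_k\text{ or }1-x_k\big)$ shows $\snorm{\indicator{[u]}}\leq 2^{\abs{A}}$, so by~\eqref{eq:total-variation:window} the distance $d_A(t)$ is bounded by a function of $\abs{A}$ alone that tends to $0$, and Proposition~\ref{prop:spatial-mixing} applies.

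The crux, and the only genuinely new point beyond Theorem~\ref{thm-xor}, is recognizing that the \emph{right} basis is multiplicative rather than additive: the spreading CA permutes the monomials $\psi_A$, whereas the additive characters $\chi_A$ would not transform nicely. Two points deserve care. First, unlike the Fourier basis of the XOR CA, the monomial basis is not orthonormal for any natural inner product, so the seminorm must be read off from the (still unique) coefficient expansion; I would verify this uniqueness explicitly. Second, the dynamics \emph{enlarges} supports, $A\mapsto A+\Neighb$, which might seem to threaten contraction; but this only helps, since a larger $n=\abs{A+\Neighb}$ makes $\rho^{n}$ smaller, and all one needs is the uniform bound $\snorm{\Phi\psi_A}\leq\rho<1$. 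Everything else transfers from the proof of Theorem~\ref{thm-xor}.
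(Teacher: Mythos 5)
Your proposal is correct and coincides with the paper's own proof: your monomial basis $\psi_A(x)=\prod_{i\in A}x_i$ is exactly the paper's ``M\"obius basis'' $\chi_A$ (defined via $\chi(\symb{0})=0$, $\chi(\symb{1})=1$), the pullback $\psi_A\oo F=\psi_{A+\Neighb}$, the noise expansion, the seminorm, and the contraction bound $\rho^{\abs{A+\Neighb}}-p^{\abs{A+\Neighb}}\leq\rho$ all match the paper's argument. The only cosmetic differences are constant factors in the final cylinder-set estimates, which do not affect the conclusion.
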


\begin{proof}
The proof is similar to that of Theorem~\ref{thm-xor} except that we use a different basis for $C_0(\X)$.
Define $\chi:S\to\CC$ by $\chi(\symb{0})\isdef 0$ and $\chi(\symb{1})\isdef 1$.
Clearly, $\{1,\chi\}$ is a basis for the linear space $\CC^S$.
For a finite $A\subseteq\ZZ^d$, define $\chi_A:\X\to\CC$ by
\begin{align}
	\chi_A(x) &\isdef \prod_{i\in A}\chi(x_i) % \;.
	=
		\begin{cases}
			1 & \text{if $x_i=\symb{1}$ for each $i\in A$,} \\
			0 & \text{otherwise.}
		\end{cases} 
\end{align}
It is easy to verify (e.g., using the inclusion-exclusion principle) that the functions $\chi_A$ (for finite $A\subseteq\ZZ^d$) form a basis for $C_0(\X)$.
We call this basis the \emph{M\"obius basis} and each $\chi_A$ a \emph{character} of $\X$.

The advantage of the above basis is that the CA $F$
maps characters into characters.  Namely,
$\chi_A(Fx)=1$ if and only if $(Fx)_i=1$ for every $i\in A$,
which is in turn the case if and only if $x_{i+j}=1$ for every $i\in A$ and $j\in \Neighb$.
Therefore, $F\chi_A=\chi_{F^*A}$, where $F^*A\isdef A+\Neighb$.

As in the case of the Fourier basis, calculating the effect of
the noise $\Theta$ on characters boils down to calculating
the effect of the transition matrix $\theta$ on $\chi$.
For the latter, we obtain
\begin{align}
	(\theta\chi)(a) &=
		\begin{cases}
			p		& \text{if $a=\symb{0}$,} \\
			1-q		& \text{if $a=\symb{1}$,}
		\end{cases}
\end{align}
which gives $\theta\chi=p + (1-p-q)\chi$.
It follows, as in the previous case, that
\begin{align}
	\Theta\chi_A &=
		\sum_{I\subseteq A} p^{\abs{A\setminus I}}(1-p-q)^{\abs{I}}\chi_I \;.
\end{align}
For the combination of the CA $F$ and noise $\Theta$, we get
\begin{align}
	\Phi\chi_A = \Theta(\chi_A\oo F) &=
		\sum_{I\subseteq A+\Neighb} p^{\abs{(A+\Neighb)\setminus I}}(1-p-q)^{\abs{I}}\chi_I \;.
\end{align}

Each local function $h\in C_0(\X)$
has a unique representation $h=\sum_{A\subseteq\ZZ^d}\widehat{h}_A\chi_A$
as a linear combination of characters.  We define a semi-norm on $C_0(\X)$ by
\begin{align}
\label{eq:spreading-bin:seminorm}
	\snorm{h} &\isdef \sum_{\varnothing\neq A\subseteq\ZZ^d}\bigabs{\widehat{h}_A}
\end{align}
for each $h\in C_0(\X)$.
Following the same argument as in the case of the XOR CA, a sufficient condition for the uniform ergodicity of $\Phi$ is that $\Phi$ is contractive with respect to $\snorm{\cdot}$, in the sense that there is a constant $0\leq\rho<1$ such that $\snorm{\Phi h}\leq\rho\snorm{h}$ for every $h\in C_0(\X)$.
%Note that with respect to the new semi-norm, $\snorm{\indicator{[u]}}<2^{\abs{A}}$ for a cylinder with base $A$, because
%\begin{align}
%	\indicator{[u]}=\chi_{u^{-1}(\symb{1})}\prod_{k\in u^{-1}(\symb{0})}(1-\chi_k)
%		= \sum_{B\subseteq u^{-1}(\symb{0})}(-1)^{\abs{B}}\chi_{u^{-1}(\symb{1})\cup B} \;.
%\end{align}
The property $\snorm{\Phi h}\leq\rho\snorm{h}$ for every $h\in C_0(\X)$ in turn is equivalent to the condition that $\snorm{\Phi\chi_A}\leq\rho$ for each non-empty finite $A\subseteq\ZZ^d$.

Clearly, $\Phi\chi_{\varnothing}=\chi_{\varnothing}$, hence $\snorm{\Phi\chi_{\varnothing}}=\snorm{\chi_{\varnothing}}=0$.
For a non-empty finite $A\subseteq\ZZ^d$, we have
\begin{align}
	\snorm{\Phi\chi_A} &=
		\biggsnorm{
			\sum_{I\subseteq A+\Neighb}
				p^{\abs{(A+\Neighb)\setminus I}}(1-p-q)^{\abs{I}}\chi_I
		} \\
	&=
		\sum_{\varnothing\neq I\subseteq A+\Neighb}
			p^{\abs{(A+\Neighb)\setminus I}}\abs{1-p-q}^{\abs{I}} \\
	&=
		\left(p + \abs{1-p-q}\right)^{\abs{A+\Neighb}}
		- p^{\abs{A+\Neighb}} \;.
\end{align}
We get uniform ergodicity if $p+\abs{1-p-q}<1$, that is if either $p+q\leq 1$ and $q>0$, or $p+q>1$ and $p+\frac{1}{2}q<1$.

The spatial mixing of the unique invariant measure follows in a similar fashion as in Theorem~\ref{thm-xor}.  Note that %with respect to the semi-norm~\eqref{eq:spreading-bin:seminorm},
$\snorm{\indicator{[u]}}<2^{\abs{A}}$ for a cylinder with base $A$, because
\begin{align}
	\indicator{[u]}=\chi_{u^{-1}(\symb{1})}\prod_{k\in u^{-1}(\symb{0})}(1-\chi_k)
		= \sum_{B\subseteq u^{-1}(\symb{0})}(-1)^{\abs{B}}\chi_{u^{-1}(\symb{1})\cup B} \;.
\end{align}
%Combined with~\eqref{eq:xor-noise:bound}, we find that $d_A(t)\leq 2^{2\abs{A}}\rho^t$ for every finite set $A\subseteq\ZZ^d$ and $t\geq 1$.
Integrating~\eqref{eq:xor-noise:bound} over $y$ with respect to $\pi$, we therefore get $\bigabs{\pi([u])-\Phi^t(x,[u])}\leq 2\times 2^{\abs{A}}\rho^t$.  Now, using~\eqref{eq:total-variation:window}, we obtain that $d_A(t)\leq 2^{2\abs{A}}\rho^t$ for every finite set $A\subseteq\ZZ^d$ and $t\geq 1$.
The spatial mixing of the invariant measure hence follows from Proposition~\ref{prop:spatial-mixing}.
\end{proof}

\section{Open problems}
\label{sec:open}

We conclude with several open problems, some of which are already mentioned in the text.

\begin{openproblem}
	Is every ergodic PCA uniformly ergodic?
\end{openproblem}
\noindent For deterministic CA, ergodicity and uniform ergodicity are known to be equivalent~\cite{GuiRic08,Sal12, MaiMar14_TCS}.  We conjecture that the same is true for general PCA.

The ergodic PCA discussed in this article are all \emph{exponentially} ergodic, in the sense that, the probability of each cylinder set converges exponentially fast to its stationary value.
We do not know any example of an ergodic PCA that is not exponentially ergodic.
\begin{openproblem}
	Find an example of a (uniformly) ergodic PCA that is not exponentially ergodic.
\end{openproblem}
\noindent For the class of PCA that are monotonic with respect to a total ordering of the alphabet, Louis~\cite{Lou04} has provided a necessary and sufficient condition for exponential ergodicity in terms of a spatial mixing condition.

Proposition~\ref{prop:computability} above established the computability of the unique invariant measure for every ergodic PCA.
However, for the PCA discussed in this article, one can exploit the exponential ergodicity to give a ``fast'' algorithm for computing the unique invariant measure.
\begin{openproblem}
	Give an example of (uniformly) ergodic PCA for which the unique invariant measure is not computable by a ``fast'' algorithm.
\end{openproblem}

\begin{openproblem}
	Is the unique invariant measure of every (uniformly) ergodic PCA spatially mixing?
	Find an example of a (uniform) ergodic PCA whose unique invariant measure is not measure-theoretically isomorphic to a Bernoulli process.
\end{openproblem}
\noindent Proposition~\ref{prop:spatial-mixing} above provides a sufficient condition for the unique invariant measure of a uniformly ergodic PCA.
In view of the result of Goldstein et al.~\cite{GolKuiLebMae89}, we conjecture that the unique invariant measure of a positive-rate uniformly ergodic PCA is always spatially mixing.

For perturbations of a nilpotent CA with noise, we know ergodicity when noise is sufficiently high (Thm.~\ref{thm-high}) or sufficiently low (Thm.~\ref{thm-nilp}).  When the noise has zero range, one may expect ergodicity to hold for all the parameter range.
\begin{openproblem}
	Is every perturbation of a nilpotent CA with a positive zero-range noise ergodic?
\end{openproblem}

The complete ergodicity of surjective CA under positive permutation noise remains open.
\begin{openproblem}
	Is every perturbation of a surjective CA with a positive permutation noise ergodic?  How about perturbations with other types of noise?
\end{openproblem}

One of the simplest CA for which the ergodicity under noise is unknown is the majority rule.
%A \emph{majority} CA is a CA $F$ with alphabet $\{\symb{0},\symb{1}\}$ and neighbourhood $\Neighb$ in which $F(x)_i$ is defined as the symbol that is in majority among $(x_i)_{i\in\Neighb}$ (see Fig.~\ref{fig:majority}).  The neighbourhood $\Neighb$ is assumed to have an odd cardinality.
A \emph{majority} CA is a CA with binary alphabet under which the symbol at each site is updated to the symbol that is in majority among the neighbouring sites (see Fig.~\ref{fig:majority}).  The neighbourhood has to have an odd cardinality to avoid ties.
\begin{openproblem}
	Is every small positive perturbation of a one-dimensional majority CA ergodic?
	Is every perturbation of the two-dimensional nearest-neighbour majority CA with sufficiently small positive zero-range noise non-ergodic?
\end{openproblem}
\noindent For the one-dimensional case, Gray has outline a proof of ergodicity for the nearest-neighbour marjority CA under small symmetric zero-range noise~\cite{Gra87}. %but a complete proof remains missing.
On the other hand, Toom has proven the non-ergodicity of sufficiently small perturbations of the two-dimensional majority CA with the NEC-neighbourhood (see Example~\ref{exp:toom}).
It is conjectured that in two dimensions, the non-ergodicity holds also for the symmetric nearest-neighbour majority rule.

\begin{figure}[!ht]
\begin{scriptsize}
\begin{center}
%\begin{tabular}{p{7cm}p{0.2cm}p{7cm}} 
\begin{tabular}{p{0.45\textwidth}@{}p{0.05\textwidth}@{}p{0.45\textwidth}}
	~~$\varepsilon=0$	&& ~~$\varepsilon=0.01$\\
%	\hline
%	\hline
%	&&\\
	\boxed{\includegraphics[width=0.435\textwidth]{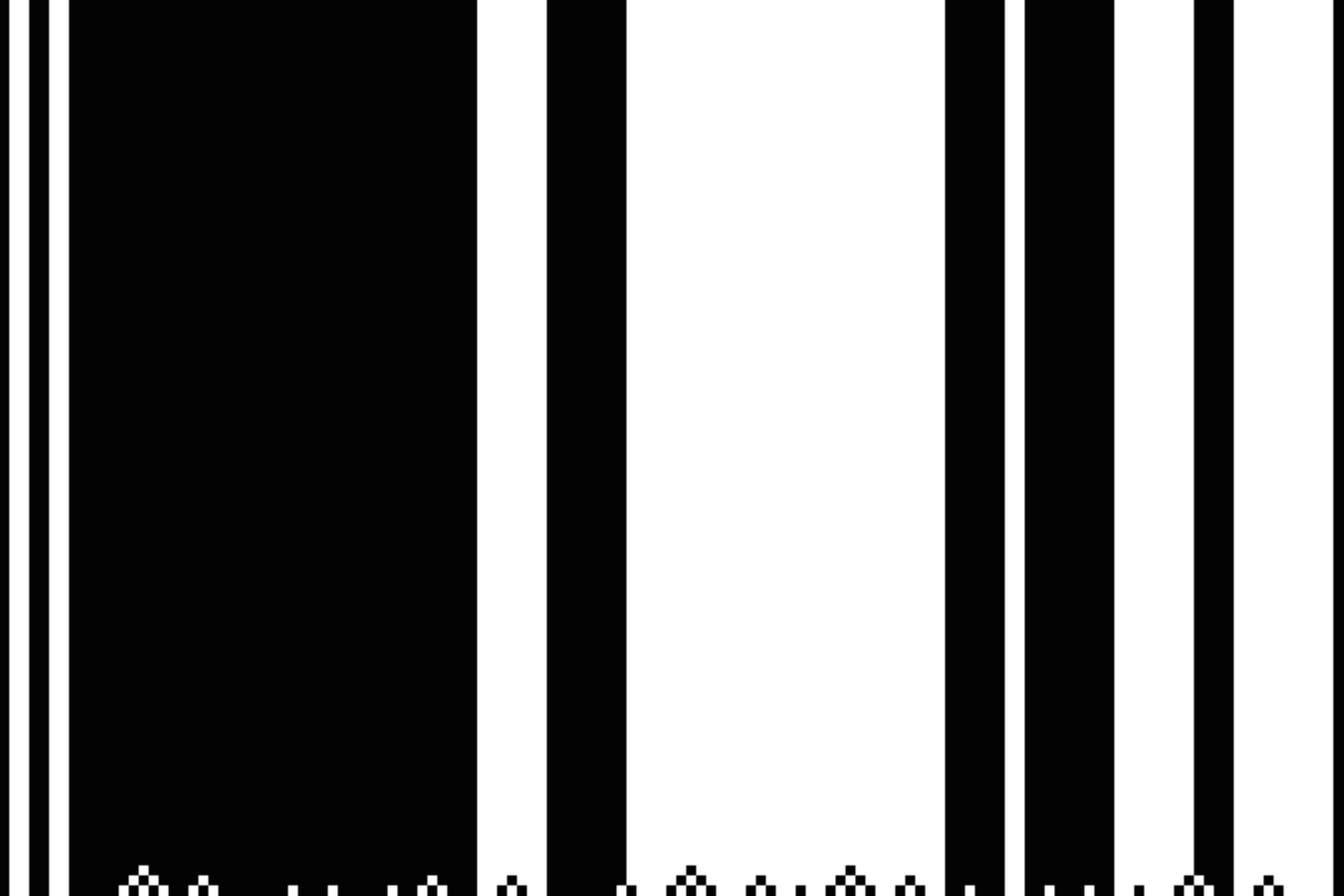}}
	&& \boxed{\includegraphics[width=0.435\textwidth]{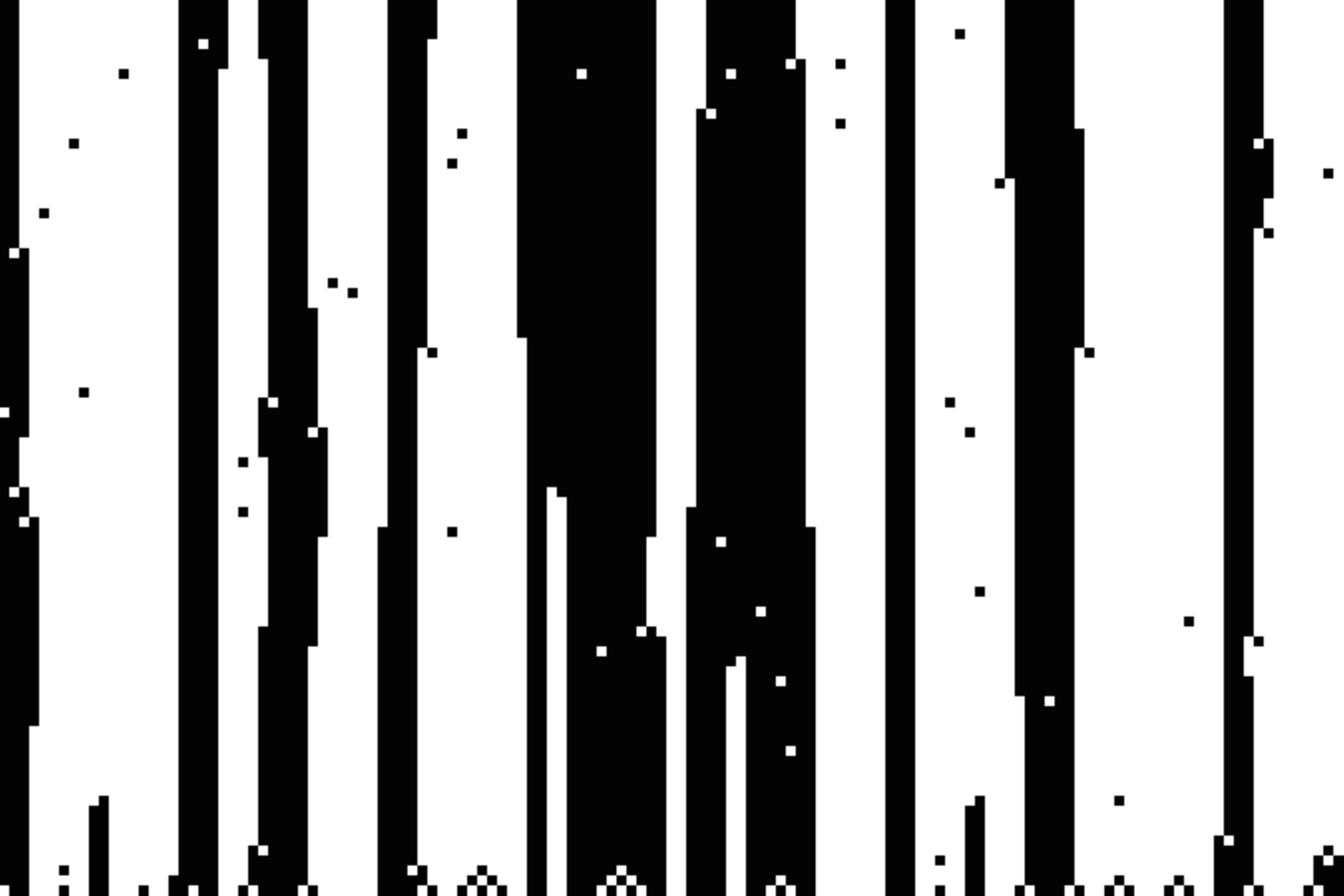}} \\ [1ex]
	\multicolumn{3}{p{0.92\textwidth}}{~~The local rule is given by
	%$F(x)_i=\left\lceil \frac{x_{i-1}+x_{i}+x_{i+1}}{2}\right\rceil\bmod 2$.
	$F(x)_i\isdef\majority(x_{i-1},x_i,x_{i+1})$.
	The noisy version appears to be ergodic.}
	%\\
\end{tabular}
\end{center}
\end{scriptsize}

\caption{%
	Space-time diagrams of the majority rule perturbed by a memoryless noise with uniform replacement distribution and error probability $\varepsilon$. Time goes upwards.}
	\label{fig:majority}
\end{figure}

%\begin{openproblem}
%	Is the (uniform) ergodicity problem decidable for PCA with positive transition probabilities?
%	Is the (uniform) ergodicity problem decidable for CA with positive noise?
%	There are also the following variants: Given a CA $F$, is it decidable whether the perturbation of $F$ with $\varepsilon$-uniform noise is ergodic for a sufficiently small $\varepsilon>0$?   Given a CA $F$, is it decidable whether the perturbation of $F$ with $\varepsilon$-uniform noise is non-ergodic for sufficiently small $\varepsilon>0$?
%	\siamak{To keep or not to keep?  Perhaps not relevant to the current paper.}
%\end{openproblem}

We end with posing two widely open-ended problems.

%The dependence of the unique invariant measure on the noise parameters is largely open.
\begin{openproblem}
	Study the continuity of the unique invariant measure of ergodic perturbations of CA as a function of the noise parameters.
\end{openproblem}

\begin{openproblem}
	Identify classes of CA that remain non-ergodic in presence of sufficiently small noise.
\end{openproblem}
\noindent See~\cite{Too80} for a class of two-dimensional examples, and~\cite{Gac86,Gac01} for a one-dimensional example.

\bibliographystyle{plainurl}
\bibliography{bibliography}

\end{document}